\title{\(\LL\alpha\)-Regularization of the Beckmann Problem}
\author{
    Dirk Lorenz% 0000-0002-7419-769X
    \thanks{%
	Institute of Analysis and Algebra,
	TU Braunschweig,
	38092 Braunschweig, Germany,
	(\texttt{d.lorenz@tu-braunschweig.de, h.mahler@tu-braunschweig.de})
    }
    \and
    Hinrich Mahler\footnotemark[1]% 0000-0001-9108-549X
    \and
    Christian Meyer% 0000-0003-3340-2316
    \thanks{
	Fakultät für Mathematik,
	TU Dortmund,
	44221 Dortmund, Germany
	(\texttt{christian2.meyer@tu-dortmund.de})
    }
}
\begin{document}

\maketitle
\begin{abstract}
    We investigate the problem of optimal transport in the so-called
        Beckmann form, \ie given two Radon measures on a compact set,
        we seek an optimal flow field which is a vector valued Radon measure on
        the same set that describes a flow between these two measures and
        minimizes a certain linear cost function.

    We consider \(\LL\alpha\) regularization of the problem, which guarantees uniqueness and
        forces
        the solution to be an integrable function rather than a Radon measure.
    This regularization naturally gives rise to a semi-smooth Newton scheme that can be used to
        solve the problem numerically.
    Besides motivating and developing the numerical scheme, we also include approximation results
        for vanishing regularization in the continuous setting.
\end{abstract}

\section{Introduction}

The Beckmann formulation of optimal transport is the problem of finding a flow field that
    describes how to move some measure onto another measure of the same mass such that a certain
    linear cost functional is minimal.
It was first introduced in~\cite{Beckmann1952} in a more general form.
Specifically, for a domain \(\Omega\subset\RR^d\), two Radon measures \(\mu^+,\mu^-\) on
    \(\Omega\) with \(\mu^+(\Omega) = \mu^-(\Omega)\) and a continuous cost function \(w:\Omega\to
    [0,\infty)\) our goal is to solve % chktex 9
    \begin{equation}\tag{BP}\label{eq:beckmann}
        \inf_{\substack{q\in\M*\,,\\\div[] q = \mu}} \intO w \dabsq\,,
    \end{equation}
    where we abbreviated \(\mu:=\mu^+-\mu^-\) and the divergence constraint has to be understood
    in a
    suitable weak sense.
Existence of solutions is well known~\cite{Santambrogio2015,Dweik_2018}, but since the objective
    functional in~\cref{eq:beckmann} is not strictly convex, solutions may not be unique.
Moreover, for general Radon measures \(\mu^+,\mu^-\), a solution may not admit a density \wrt the
    Lebesgue measure.
Hence, standard approximation tools from numerical analysis are not applicable.
This motivates the use of regularization of the continuous problem to obtain approximate solutions
    that are functions instead of measures, which in turn can be treated by classical
    discretization
    techniques in order to solve the regularized problem.
Here, we aim to employ \(\LL\alpha\)-regularization which, as we will see, also naturally gives
    rise to a semi-smooth Newton scheme that can be used to solve the problem numerically.

The Beckmann problem is closely related to other problems of optimal transport theory, namely the
    so called Monge problem and the Kantorovich problem as well as the Monge-Kantorovich
    equation~\cite{Santambrogio2015,Ambrosio2003}.
For example, for \(w\equiv 1\)~\cref{eq:beckmann}
    is equivalent to the Kantorovich problem (with Euclidian cost),~\cite[§
        4.2.1]{Santambrogio2015}.

\subsection{Notation and problem statement}

Before we formulate our problem, let us fix the notation that will be used in the remainder.
The space of Radon measures and the set of probability measures on \(\Omega\subset\RR^d\) will be
    denoted by \(\M\) and \(\P\), respectively.
The space of vector valued Radon measures will be denoted by \(\M*\) and we will use the same
    convention for all other classes of measures and functions as well.
With \(\CC\) and \(\CC[k]\) we denote the spaces of continuous functions and \(k\) times
    continuously differentiable functions, respectively.

For a Banach space \(X\) we will denote its topological dual by \(X^*\).
The \(d\)-dimensional Lebesgue measure will be denoted by \(\leb^d\) and, where appropriate,
    integrals \wrt the Lebesgue measure are simply denoted by \(\dx\) with the appropriate
    integration
    variable \(x\).
For a set \(\Omega\subset\RR^d\) we will also use the shorthand notation \(\abs{\Omega} :=
    \leb^d(\Omega)\).
For the space of \(p\)-integrable functions on \(\Omega\) with respect to the Lebesgue measure,
    the symbol \(\Lp\)	will be used.
The symbol \(\W<k,p>\)	denotes the Sobolev space of functions for which the weak derivatives up
    to order \(k\) are functions in \(\Lp\).

When a measure \(\nu\) is absolutely continuous with respect to another measure \(\mu\), written
    as \(\nu\ll\mu\), the Radon-Nikodym derivative of \(\nu\) \wrt to \(\mu\), \ie the density of
    \(\nu\) \wrt \(\mu\), will be denoted by \(\tfrac\dnu\dmu\).
Conversely, by \(\I: \L1* \to \M*\) we denote the embedding, which identifies an integrable
    function with a Radon measure on \(\Omega\) via
    \[
    (\I(f))(A) := \int_A f \dleb\qquad \forall A\subset\Omega\,.
\]
Hence, \(\I(\tfrac\dnu\dleb) = \nu\).

With slight abuse of notation, we will denote the Nemytskii-operator
    \(%
    q \mapsto (\Omega\ni x\mapsto F(x,q(x)))%
    \)
    associated with a function \(F: \Omega\times\RR^d\to \RR^l\) by the same symbol.
The characteristic function of a set \(A\) will be denoted by \(\1_A\).
In contrast, \(\ind_A\) denotes the indicator functional of \(A\).
We denote the Euclidian norm on \(\RR^d\) with \(\abs{\blank}\) and the positive part  of a scalar
    \(c\) as \(c_+ := \max\{c, 0\}\).
The inner product of \(x,y\in\RR^d\) will be denoted by \(x\cdot y\).

In the following we will consider a compact domain \(\Omega\).
For \(f:\Omega\to\RR\) and
    \(c\in\RR\), we will use the shorthand notation
    \begin{equation*}
        \{f > c\} := \set{x\in\Omega}{f(x)> 0}
    \end{equation*}
    and analogously for \( \{f \geq c\}\), \( \{f < c\}\), \( \{f \leq c\}\) and \(\{f\neq c\}\).

The regularized Beckmann problem of optimal transport considered in this work now reads as
    \begin{equation}\tag{BP\(_\epsilon\)}\label{eq:reg_beckmann}
    \inf_{\substack{q\in\La*,\,\\\div[] q = \mu}} \intO w \abs{q}\dleb + \frac\epsilon\alpha
    \Lanorm{q}^\alpha\,.
\end{equation}%

Let us summarize our standing assumptions:
    \begin{assumption}\label{assmpt:general}
    We assume that \(d\in\N\) and \(\Omega\subset\RR^d\) is a compact set, whose interior is a
    bounded Lipschitz domain in the sense of~\cite[Chapter 1.2]{Grisvard2011}.
The cost function \(w:\Omega\to\RR\) is continuous.
Assume \(1<\alpha<\tfrac d{d-1}\).
Finally, we assume \(\mu_i\in\P\), for \(i=1,2\).
\end{assumption}

\begin{remark}\label{remark:embedding}
    \begin{enumerate}
        \item In contrast to standnard notation in PDE literature, we use the symbol \(\Omega\)
              for a closed set.
              Nevertheless, for convenience, we simply write \(\W<k,p>\) instead of
                  \(\W<k,p>[\interior(\Omega)]\) for Sobolev spaces.
              \item\label{remark:embedding:one}
              Note that by standard Sobolev embeddings
                  (e.g.~\cite[Theorem 4.12]{Adams2003}), it
                  holds that \(\W<1,\alpha'>\embed \CC\), since \(\alpha < \tfrac d{d-1}\).
              Hence, \(\M \embed {(\W<1,\alpha'>)}^*\).
              This allows us to use arbitrary measures \(\mu^+,\mu^-\in\P\) as marginals
                  in~\cref{eq:reg_beckmann}.

        \item Note that for the integral \(\intO w \abs{q}\dleb\) to exist, the cost function
              \(w\) does not need to be continuous and the problem may be formulated for more
              general cost
              functions.
              However, some of the results in this work require this assumption and for simplicity
                  it shall be assumed throughout the paper.
    \end{enumerate}
\end{remark}

\subsection{Related Work}\label{sec:lit-review}

Due to its relation with other optimal transport problems, the Beckmann problem has been
    considered in a number of different settings.

The authors of~\cite{solomon:2014} tackle the Beckmann problem with uniform cost function \(w\)
    from a geometry processing point of view to compute the distances between points on discrete
    surfaces.
The Helmholtz-Hodge decomposition and the spectral decomposition of the Laplacian are used to
    reformulate the Beckmann problem into an unconstrained problem, where the coefficients of the
    spectral decomposition are the optimization variables.
The authors then pass to a discrete setting and truncate the spectral decomposition, which reduces
    the problem size and gives an approximation of the original problem.

Several publications employ first order schemes to solve the Beckmann problem.
In~\cite{li:2017}, the authors discretize the problem via a finite differences scheme and employ
    the Chambolle-Pock algorithm.
They, too, only consider uniform cost \(w\), which allows to derive closed form expressions for
    the involved proximal operators.
To ensure uniqueness, they add a regularization term similar to to the one
    in~\cref{eq:reg_beckmann}, but only consider the case \(\alpha=2\).
The methods of~\cite{li:2017} are extended to unbalanced transport (\ie \(\mu^+(\Omega) \neq
    \mu^-(\Omega)\)) in~\cite{ryu:2017} and~\cite{liu:2018} proposes a multilevel initialization
    approach to speed up the computation time for fine grids.
Another first order scheme is covered in~\cite{jacobs:2019}, where a variant of the Chambolle-Pock
    algorithm is analyzed, which involves the computation of optimal step sizes.
The results are applied to an ROF formulation of the Beckmann problem in two dimensions with
    uniform weight and without regularization.
Moreover, an estimate for the error in the objective value is derived.
In~\cite{benamou:2015} multiple different problems are covered, including the Beckmann problem
    with general cost or \(\LL p\)-regularization (in the context of so-called congested
    transport),
    but not both at the same time.
The problems are solved numerically by solving the dual formulation
    by the ADMM algorithm.
This requires to solve a Laplace equation with Neumann boundary conditions
    in each iteration step.

The authors of~\cite{benmansour:2009,benmansour:2010} consider the closely related problem of
    traffic congestion~\cite{Carlier_2008}.
This problem generalizes the Beckmann problem by allowing
    the cost function \(w\) to depend on \(q\) in the sense \(w = w(x,\abs{q(x)})\) and the
    so-called
    traffic intensity is computed instead of \(q\), which allows to model a congestion effect.
A fast
    marching algorithm is proposed to treat the problem numerically.
In~\cite{Brasco_2010} the authors consider regularity results for this line of work and model the
    congestion by a term \(\tfrac 1p \abs{.
}^p\).
This corresponds to our regularization term, however
    they only consider uniform cost.~\cite{hatchi:2017,Brasco2013} consider a even more general,
    anisotropic setting and~\cite{hatchi:2017} includes numerical examples, which rely
    on~\cite{benamou:2015}.

A different type of regularization is employed in~\cite{barrett:2007}, where the authors use the
    Monge-Kantorovich equation as starting point and consider the functional \(\intO
    w\bd\abs{q}^r\)
    with \(r>1\) after smoothing the marginals \(\mu^+\) and \(\mu^-\) accordingly.
After providing a convergence result for \(r\to0\), the authors switch to a discrete setting and
    give another approximation result for increasing discretization fineness.
The numerical scheme then relies on a fixed-point iteration of the form \(\abs{x_i}^{r-2}(x_{i+1}
    - x_i) + \abs{x_i}^{r-2}x_i\), where an additional regularization is required due to the
    non-smoothness of \(\abs{\blank}\).
We point out that in contrast to~\cref{eq:reg_beckmann} this choice of regularization does not
    preserve the non-smooth structure of~\cref{eq:beckmann}.
The setting of~\cite{barrett:2007} is extended to a setting of unbalanced transport
    in~\cite{barrett:2009}.

The authors of~\cite{Facca_2020} propose a dynamic formulation of the Monge-Kantorovich equations
    (for uniform cost) and conjecture that the solution approximates the solution of the static
    equations for \(t\to\infty\).
However, the conjecture is still open.
The authors argue that the
    dynamic formulation naturally adds a regularization to the problem and derive an Euler scheme
    for
    solving the problem numerically.

\subsection{Organization}

The remainder of this work is organized as follows.
We start in~\cref{sec:theory} by rigorously defining the divergence constraint
    in~\cref{eq:reg_beckmann} and proving existence and uniqueness of solutions.
Afterwards we derive a semi-smooth Newton iteration in~\cref{sec:ssn}, which will also involve a
    second regularization.
We detail how to choose appropriate step sizes via an auxiliary minimization problem and make a
    connection between that problem and~\cref{eq:reg_beckmann}.
\Cref{sec:approx} is concerned with approximation results.
More precisely, we prove weak convergence of minimizers of the regularized problems towards
    minimizers of~\cref{eq:reg_beckmann,eq:beckmann} under suitable assumptions.
After discussing numerical examples in~\cref{sec:numerics}, we finally conclude
    in~\cref{sec:conclusion}.

\section{Existence of solutions}\label{sec:theory}

Let us rigorously define the divergence constraint in problem~\cref{eq:reg_beckmann}.
Motivated by the zero-flux boundary condition, the divergence constraint in~\cref{eq:beckmann} is
    to be understood as
    \begin{equation}\label{eq:mdiv_weak}
    -\intO \grad \phi \cdot\dq = \intO \phi\dmu
    \quad \forall\, \phi \in \CC[1][\Omega]\,.
\end{equation}
Therefore, the equality constraint in the regularized problem~\cref{eq:reg_beckmann} reads
    \begin{equation}\label{eq:cont_weak1}
    -\intO q\cdot \grad \phi \dleb = \intO \phi\dmu\,.
\quad \forall\, \phi \in \W<1,\alpha'>
\end{equation}

\begin{lemma}
    Let \(q\in \La*\) and let~\cref{assmpt:general} hold.
    Then \(q\) solves~\cref{eq:cont_weak1} if and only if it solves
        \begin{equation}\label{eq:cont_weak2}
            -\intO q\cdot \grad \phi \dleb = \intO \phi\dmu
            \quad \forall\, \phi \in \Wempty,
        \end{equation}
        where
        \begin{equation*}
        \Wempty
        := \set{ v\in  \W<1,\alpha'>}{\intO v(x) \dx = 0}\,.
    \end{equation*}
\end{lemma}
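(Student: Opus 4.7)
The plan is to prove the equivalence by showing both implications, with the nontrivial direction reducing to an averaging trick that exploits the mass-balance condition built into \cref{assmpt:general}.

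The forward direction ``\cref{eq:cont_weak1} $\Rightarrow$ \cref{eq:cont_weak2}'' is immediate, since $\Wempty \subset \W<1,\alpha'>$ is just a restriction of the set of admissible test functions. So the substance lies entirely in the reverse direction.

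For ``\cref{eq:cont_weak2} $\Rightarrow$ \cref{eq:cont_weak1}'', given any $\phi \in \W<1,\alpha'>$ I would form the mean value $c := \tfrac{1}{|\Omega|}\int_\Omega \phi(x)\,dx$ and set $\tilde\phi := \phi - c$. Since constants lie in $\W<1,\alpha'>$ (as $\Omega$ is bounded) and $\int_\Omega \tilde\phi \, dx = 0$ by construction, we have $\tilde\phi \in \Wempty$, so \cref{eq:cont_weak2} applies with $\tilde\phi$ as test function. The gradient side is unchanged, $\nabla \tilde\phi = \nabla \phi$, and the right-hand side reads
\[
    \intO \tilde\phi \dmu = \intO \phi \dmu - c\,\mu(\Omega).
\]
Recovering \cref{eq:cont_weak1} then amounts to showing that the last term vanishes.

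The single point where \cref{assmpt:general} enters is precisely here: the marginals $\mu^+,\mu^- \in \P$ are probability measures, so $\mu(\Omega) = \mu^+(\Omega) - \mu^-(\Omega) = 1-1 = 0$, and the correction term disappears. This is the only ``obstacle,'' and it is really just a consistency check confirming that the divergence constraint is compatible with the mass-balance hypothesis. I would end by noting that the same argument works whenever $\mu^+(\Omega)=\mu^-(\Omega)$, so the equivalence extends verbatim to the balanced but non-probability setting.
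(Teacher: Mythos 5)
Your proof is correct and is essentially the paper's own argument: the paper writes the decomposition as $\W<1,\alpha'> = \Wempty + \RR$ and adds an arbitrary constant to a mean-zero test function, while you start from an arbitrary $\phi$ and subtract its mean, but both reduce to the same observation that the constant correction term is annihilated by $\mu(\Omega)=\mu^+(\Omega)-\mu^-(\Omega)=0$.
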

\begin{proof}
    If \(q\) solves~\cref{eq:cont_weak1}, then it trivially also solves~\cref{eq:cont_weak2}.
    On the other hand, if \(q\) solves \(\cref{eq:cont_weak2}\), then for every \(\phi\in
        \Wempty\)
        and every \(c\in \RR\), the assumptions on the marginals imply
        \begin{equation*}
        \begin{aligned}
        -\intO q(x)\cdot \grad (\phi(x) + c) \dx
        &= \intO \phi\dmu
        = \intO \phi\dmu + c (\mu^+(\Omega) - \mu^-(\Omega))\\
        &= \intO (\phi(x) + c)\dmu(x).
    \end{aligned}
    \end{equation*}
    Since \(\W<1,\alpha'> = \Wempty + \RR\), this gives the assertion.
\end{proof}

Using the previous result, we can now define the divergence on \(\La*\) as follows.

\begin{definition}\label{def:div}
    Define
        \begin{equation*}
            \begin{aligned}
                 & \div : \La* \to \Wperp := \set{v\in {(\W<1,\alpha'>)}^*}{\dual{v}{1} = 0},
                \\
                 & \dual{\div q}{\phi} := - \intO q\cdot \grad \phi\dleb \quad \forall \phi\in
                \Wempty\,,
            \end{aligned}
        \end{equation*}
        where \(\grad\) denotes the usual weak gradient.
\end{definition}

\begin{remark}\label{rem:mu_in_wperp}
    Recalling~\cref{remark:embedding}, we
        observe that \(\mu\in \Wperp\), since
        \(\dual{\mu}{1} = \mu^+(\Omega) - \mu^-(\Omega)  = 0\).
    Thus,~\cref{eq:cont_weak2} (and~\cref{eq:cont_weak1}, respectively) is equivalent to
        \begin{equation*}
        \div q = \mu \quad \text{in } \Wperp.
    \end{equation*}
\end{remark}

Next, we give a characterization of \({(\Wempty)}^*\).

\begin{lemma}
    The space \(\Wperp\) is isomorphic to \({(\Wempty)}^*\).
\end{lemma}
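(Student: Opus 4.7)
The plan is to construct an explicit topological isomorphism, exploiting the fact that \(\Wperp\) is nothing but the annihilator of the one-dimensional subspace \(\RR\cdot 1 \subset \W<1,\alpha'>\), and then invoking the standard identification of such annihilators with duals of complementary subspaces.

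First, I would record the topological direct sum decomposition \(\W<1,\alpha'> = \Wempty \oplus \RR\cdot 1\) already used in the previous lemma. The mean-value functional \(v \mapsto \bar v := \tfrac{1}{\abs{\Omega}}\intO v \dleb\) is bounded on \(\W<1,\alpha'>\) (since \(\Omega\) is bounded, \(1\in\LL\alpha\), and \(\W<1,\alpha'>\embed \LL{\alpha'}\)), so the projection \(Pv := v - \bar v\) onto \(\Wempty\) is continuous and there exists \(C>0\) with \(\Wanorm{Pv} \leq C\,\Wanorm{v}\) for all \(v\in\W<1,\alpha'>\).

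Second, I would define the candidate map by restriction, \(\Phi : \Wperp \to (\Wempty)^*\), \(\Phi(\ell) := \ell|_{\Wempty}\). Linearity is immediate, and \(\norm{\Phi(\ell)}_{(\Wempty)^*} \leq \norm{\ell}_{(\W<1,\alpha'>)^*}\) by definition of the restriction. For injectivity, if \(\Phi(\ell)=0\), then \(\ell\) vanishes on \(\Wempty\), and by definition of \(\Wperp\) it also vanishes on \(\RR\cdot 1\); the direct sum decomposition then forces \(\ell\equiv 0\). For surjectivity, given \(\tilde\ell\in(\Wempty)^*\), I would define \(\ell(v) := \tilde\ell(Pv) = \tilde\ell(v - \bar v)\) for \(v\in\W<1,\alpha'>\). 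Then \(\ell\) is linear, bounded by \(\abs{\ell(v)} \leq \norm{\tilde\ell}_{(\Wempty)^*}\Wanorm{Pv} \leq C\,\norm{\tilde\ell}_{(\Wempty)^*}\Wanorm{v}\), satisfies \(\dual{\ell}{1} = \tilde\ell(0) = 0\), and coincides with \(\tilde\ell\) on \(\Wempty\) (because \(Pv=v\) there). Hence \(\ell\in\Wperp\) and \(\Phi(\ell) = \tilde\ell\).

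Finally, continuity of \(\Phi^{-1}\) follows from the same bound \(\norm{\Phi^{-1}(\tilde\ell)}_{(\W<1,\alpha'>)^*}\leq C\,\norm{\tilde\ell}_{(\Wempty)^*}\), so \(\Phi\) is a bounded linear bijection with bounded inverse, i.e.\ a topological isomorphism. I do not foresee a substantive obstacle here: the result is the classical identification \(Z^\perp \cong (X/Z)^*\) for a closed subspace \(Z\) of a Banach space \(X\), in the favourable case where \(Z = \RR\cdot 1\) admits a complement \(\Wempty\) realised as the kernel of a bounded functional; the only thing to verify carefully is the continuity of the mean-value projection, and that is a direct consequence of the boundedness of \(\Omega\) together with Hölder's inequality.
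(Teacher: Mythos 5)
Your proof is correct and follows essentially the same route as the paper: both hinge on the decomposition $\W<1,\alpha'> = \Wempty \oplus \RR\cdot 1$ and use the mean-value projection $Pv = v - \bar v$ to construct the extension. The minor difference is that you build the extension of $\tilde\ell$ directly as $\tilde\ell \circ P$, avoiding the paper's Hahn–Banach detour (which produces an extension $L$ that is then corrected to $\hat L(v) = L(v) - \bar v L(1) = L(Pv)$, ultimately the same map), and you explicitly verify boundedness of the inverse, which the paper leaves implicit.
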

\begin{proof}
    On the one hand, it is clear that a functional in \(\Wperp\) defines a functional on
        \(\Wempty\) so that \(\Wperp \subset {(\Wempty)}^*\).

    On the other hand, the Hahn-Banach theorem implies that every \(\ell \in  {(\Wempty)}^*\) can
        be
        extended to a functional \(L\) on \(\W<1,\alpha'>\).
    If we define \(\bar v := \abs{\Omega}^{-1} \intO v(x)\dx\), then we observe for the functional
        \(L\) that
        \begin{equation*}
        \dual{\ell}{v - \bar v} = \dual{L}{v} - \bar v \dual{L}{1} \quad \forall\, v\in
        \W<1,\alpha'>\,.
    \end{equation*}
    If we now define \(\hat L \in {(\W<1,\alpha'>)}^*\) by \(\hat L(v) := L(v) - \bar v L(1)\),
        then

        \(\hat L(1) = 0\), \ie, \(\hat L \in \Wperp\),
        and \(\ell(v) = \hat L(v)\) for all \(v\in \Wempty\).
\end{proof}

\begin{remark}
    In complete analogoy to the above argumentation, see that \(q\in\M*\)
        solves~\cref{eq:mdiv_weak} if and only if \(q\) solves
        \begin{equation*}
        -\intO \grad \phi \cdot\dq = \intO \phi\dmu
        \quad \forall\, \phi \in \CCempty := \left\{ v\in  \CC[1]\,\middle|\,\intO v(x) \dx =
        0\right\}\,.
    \end{equation*}
    We can then define
        \begin{equation*}
            \begin{aligned}
                 & \mdiv : \M* \to \MMperp := \set{v\in {(\CC[1])}^*}{\dual{v}{1} = 0},
                \\
                 & \dual{\mdiv q}{\phi} := - \intO \grad \phi\cdot\dq \quad \forall \phi\in
                \CCempty
            \end{aligned}
        \end{equation*}
        and obtain that \(\MMperp\) is isomorphic to \({(\CCempty)}^*\).
    Hence, the divergence constraint in~\cref{eq:beckmann} can be understood as
        \[
        \mdiv q = \mu \quad \text{in }\MMperp\,.
    \]
    Note that clearly \(\CCempty \embed \Wempty\).
    Hence, for \(q\in\M*\) with \(q\ll\leb^d\) and
        \(\dens q\in\La*\), \(\mdiv q = \mu\) in \(\MMperp\) immediately implies \(\div \dens q =
        \mu\) in
        \(\Wperp\).
\end{remark}

The following two corollaries follow directly from the above definitions.

\begin{corollary}
    The adjoint operator \(\div^*\) of \(\div:\La* \to \Wperp\) is given by \(-\grad : \Wempty \to
        \Lap*\).
\end{corollary}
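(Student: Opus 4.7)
The plan is to unfold the definition of the adjoint operator together with the duality identifications established in the preceding lemma, and then read off the formula for $\div^*$ from the definition of $\div$ itself.

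First I would note that since $1<\alpha'<\infty$, the space $W^{1,\alpha'}$ is reflexive, and hence so is its closed subspace $\Wempty$. Combined with the isomorphism $\Wperp \cong (\Wempty)^*$ from the previous lemma, this gives $(\Wperp)^* \cong \Wempty$ via the canonical identification. Thus the abstract adjoint $\div^*$ of $\div:\La* \to \Wperp$ is a bounded linear operator from $\Wempty$ into $(\La*)^* \cong \Lap*$, so the claimed mapping property for $-\grad$ is at least consistent. I would also remark that for $\phi\in\Wempty\subset W^{1,\alpha'}$, the weak gradient $\grad\phi$ is indeed in $\Lap*$.

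Next, I would write out the defining identity of the adjoint. For any $q\in\La*$ and any $\phi\in\Wempty$, using the identifications above, one has on the one hand
\begin{equation*}
    \dual{\div q}{\phi}_{\Wperp,\Wempty} = -\intO q\cdot \grad\phi\dleb
\end{equation*}
by \cref{def:div}, and on the other hand
\begin{equation*}
    \dual{\div q}{\phi}_{\Wperp,\Wempty} = \dual{q}{\div^*\phi}_{\La*,\Lap*} = \intO q\cdot (\div^*\phi)\dleb
\end{equation*}
by definition of $\div^*$ and the $L^\alpha$–$L^{\alpha'}$ duality pairing. Comparing both expressions yields
\begin{equation*}
    \intO q\cdot\bigl(\div^*\phi + \grad\phi\bigr)\dleb = 0 \qquad \forall\, q\in\La*,
\end{equation*}
so by the fundamental lemma of the calculus of variations (or simply testing with $q = \div^*\phi + \grad\phi \in \Lap* \subset \La*$, which is possible on bounded $\Omega$), I conclude $\div^*\phi = -\grad\phi$.

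There is no real obstacle here; the only mildly delicate point is the bookkeeping of duality pairings, specifically justifying that the pairing between $\Wperp$ and $\Wempty$ used to define $\div^*$ coincides with the one induced by the isomorphism $\Wperp\cong(\Wempty)^*$ of the previous lemma. This is immediate from how that isomorphism was constructed via Hahn–Banach extension, so the identification above is legitimate and the proof is complete.
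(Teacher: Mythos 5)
Your proof is correct and is essentially the argument the paper has in mind: the paper simply states that this corollary "follows directly from the above definitions," and your write-up unfolds exactly that — combining the isomorphism $\Wperp\cong(\Wempty)^*$, reflexivity of $\Wempty$, and \cref{def:div} to read off $\div^*=-\grad$ from the adjoint identity.
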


\begin{corollary}
    \begin{enumerate}
        \item The divergence operator \(\mdiv\) is continuous \wrt \ws convergence in \(\M*\).
        \item The divergence operator \(\div\) is continuous \wrt weak convergence in \(\La*\).
    \end{enumerate}
\end{corollary}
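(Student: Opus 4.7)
The plan is to verify both continuity statements directly from the definitions of the respective divergence operators. The key observation is that the gradient of any admissible test function automatically lies in the predual of the space on which the divergence is evaluated, so the claimed continuities drop out of the definitions of weak resp.\ \ws convergence. Alternatively, both items can be read off abstractly from the preceding corollary on the adjoint of \(\div\): the adjoint of a bounded linear operator between Banach spaces is always \ws-to-\ws continuous.

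For item (2), suppose \(q_n \to q\) weakly in \(\La*\). For an arbitrary fixed \(\psi\in\Wempty\), the weak gradient \(\grad\psi\) lies in \(\Lap*\), which is the predual of \(\La*\). The definition of weak convergence therefore gives
\[
    \dual{\div q_n}{\psi} = -\intO q_n\cdot\grad\psi\dleb \longrightarrow -\intO q\cdot\grad\psi\dleb = \dual{\div q}{\psi}.
\]
Since \(\psi\in\Wempty\) was arbitrary, this is precisely convergence of \(\div q_n\) towards \(\div q\) in the weak topology of \(\Wperp\cong(\Wempty)^*\). As \(\La*\) is reflexive, weak and \ws convergence coincide on its dual, which matches the natural continuity statement.

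For item (1), the same reasoning applies with \(\La*\) replaced by \(\M*\), whose predual is \(\CC*\). If \(q_n\to q\) in the \ws topology of \(\M*\), then for any \(\psi\in\CCempty\) we have \(\grad\psi\in\CC*\) (since \(\psi\) is \(C^1\) on the compact set \(\Omega\)), and the definition of \ws convergence yields
\[
    \dual{\mdiv q_n}{\psi} = -\intO \grad\psi\cdot dq_n \longrightarrow -\intO\grad\psi\cdot dq = \dual{\mdiv q}{\psi}.
\]
As \(\psi\in\CCempty\) was arbitrary, we conclude \(\mdiv q_n\to\mdiv q\) in the \ws topology of \(\MMperp\cong(\CCempty)^*\).

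The whole argument is essentially a one-line verification per item, so I do not anticipate a genuine obstacle; the only thing to articulate carefully is what ``continuity'' means on the target side, which in both cases is the \ws topology of the appropriate subspace of a dual Banach space, and the passage to the reflexive setting for item (2) that identifies this with ordinary weak continuity.
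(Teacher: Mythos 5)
The paper states this corollary without proof, remarking only that it ``follows directly from the above definitions''; your direct verification (testing against $\grad\psi$ for $\psi$ in the relevant test space and invoking the definition of weak resp.\ \ws convergence in the predual pairing) is exactly the one-line argument the paper has in mind, and your remark on reflexivity correctly handles the identification of weak and \ws topologies on the target side.
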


Before proving existence and uniqueness of solutions for~\cref{eq:reg_beckmann}, we cover
    surjectivity of the divergence operator under suitable assumptions.

\begin{assumption}\label{assmpt:laplace}
    Assume that \(\Omega\) is
        such that the equation
        \begin{equation}\label{eq:poisson}
            \div[\LL{\alpha'}] \grad y = \nu
            \quad\text{in }\W<-1,\alpha'><\perp>
        \end{equation}
        has a unique solution \(y\in\Wempty\) for every \(\nu\in \W<-1,\alpha'><\perp>\).
    Note that the associated solution operator, denoted by \(\laplace_{\alpha'}^{-1}:
        \W<-1,\alpha'><\perp> \to \Wempty\)
        is continuous by the open mapping theorem.
\end{assumption}
\begin{remark}
    Note that~\cref{assmpt:laplace} holds in two and three dimensions provided that the interior
        of \(\Omega\) is
        a
        bounded Lipschitz domain in the spirit of~\cite[Chapter 1.2]{Grisvard2011}.
    See
        e.g.~\cite[Theorem 3]{Groeger1989}
        for \(d=2\) and~\cite[Theorem~1.6]{Zanger2000} for \(d=3\).
    We will assume~\cref{assmpt:laplace} to hold for the remainder of this work.
\end{remark}

\begin{lemma}\label{thm:div:surjective}
    Let~\cref{assmpt:laplace} hold.
    Then, the divergence operator \(\div\) is surjective.
\end{lemma}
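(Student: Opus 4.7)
The plan is to invert $\div$ by composing the gradient with the Poisson solution operator from \cref{assmpt:laplace}. Given any target $\nu \in \Wperp$---which, reading the paper's conventions, coincides with the right-hand-side space $\W<-1,\alpha'><\perp>$ of \cref{assmpt:laplace} via the isomorphism ${(\Wempty)}^*\cong\Wperp$ just established---I would first invoke \(\laplace_{\alpha'}^{-1}\) to obtain a unique $y \in \Wempty \subset \W<1,\alpha'>$ with $\div \grad y = \nu$, which in weak form reads
\[
    -\int_\Omega \grad y \cdot \grad \phi \,\dleb
    = \dual{\nu}{\phi}
    \quad \forall\,\phi \in \Wempty .
\]

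Next, I would set $q := \grad y$. Since $y \in \W<1,\alpha'>$, this $q$ lies in $\Lap*$. By \cref{assmpt:general} we have $\alpha < d/(d-1) \leq 2 < \alpha'$, so on the bounded set $\Omega$, Hölder's inequality yields the embedding $\Lap* \embed \La*$, which places $q$ in the domain of $\div$. A direct application of \cref{def:div} then gives
\[
    \dual{\div q}{\phi}
    = -\int_\Omega q\cdot\grad\phi\,\dleb
    = \dual{\nu}{\phi}
    \quad \forall\,\phi \in \Wempty ,
\]
so $\div q = \nu$ in $\Wperp$, and since $\nu$ was arbitrary this proves surjectivity.

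I do not expect any serious analytical obstacle once \cref{assmpt:laplace} is granted; essentially the whole argument is a bookkeeping exercise. The main thing to verify is the notational identification between $\Wperp$ and $\W<-1,\alpha'><\perp>$, so that the weak Poisson equation produced by \(\laplace_{\alpha'}^{-1}\) matches the weak divergence equation of \cref{def:div} on the test-function side. Everything else reduces to the elementary embedding $\Lap* \embed \La*$ on a bounded domain.
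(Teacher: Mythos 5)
Your proof has a genuine gap in the identification of the right‑hand‑side space. You assert that \(\Wperp\) ``coincides with'' \(\W<-1,\alpha'><\perp>\), but these are duals of two \emph{different} Sobolev spaces. By construction, \(\Wperp \cong {(\Wempty)}^* = {(\W<1,\alpha'><\emptyset>)}^*\), whereas the right‑hand‑side space of \cref{assmpt:laplace} is \(\W<-1,\alpha'><\perp> \cong {(\W<1,\alpha><\emptyset>)}^*\): the operator \(\div[\LL{\alpha'}]\grad\) sends \(y\in\W<1,\alpha'><\emptyset>\) to a functional that pairs against \emph{\(\alpha\)-}Sobolev test functions (because \(\grad y\in\Lap*\) pairs with \(\grad\phi\in\La*\)). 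Since \(\alpha<\alpha'\), on a bounded domain \(\W<1,\alpha'>\embed\W<1,\alpha>\) strictly, and dually \(\W<-1,\alpha'><\perp>\) is a \emph{proper} subspace of \(\Wperp\). So \(\laplace_{\alpha'}^{-1}\) simply does not accept an arbitrary \(\nu\in\Wperp\), and your first step is not available. The fact that you then need the (unnecessary) embedding \(\Lap*\embed\La*\) to land \(q\) in the domain of \(\div\) is a symptom of the same mismatch: you have too much regularity on \(q\), precisely because you restricted \(\nu\) without noticing.

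The paper closes this gap by \emph{dualizing} rather than applying \(\laplace_{\alpha'}^{-1}\) directly. Since \(\laplace_{\alpha'}:\Wempty\to\W<-1,\alpha'><\perp>\) is boundedly invertible by \cref{assmpt:laplace}, so is its Banach adjoint \(\laplace_{\alpha'}^{*}=\div\grad:\W<1,\alpha><\emptyset>\to\Wperp\). This is the Poisson operator with exactly the domain/codomain you need: given \(\nu\in\Wperp\), it produces \(y\in\W<1,\alpha><\emptyset>\), and \(q:=-\grad y\in\La*\) lies directly in the domain of \(\div\) with \(\div q=\nu\). The price you pay is that this \(y\) has only \(\W<1,\alpha>\)-regularity rather than \(\W<1,\alpha'>\)-regularity — but that is precisely what is needed, and \(\W<1,\alpha'>\)-regularity for an arbitrary \(\nu\in\Wperp\) is not attainable. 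To repair your argument you must insert this adjoint step; without it, you have only shown that \(\div\) maps onto the smaller space \(\W<-1,\alpha'><\perp>\), not onto all of \(\Wperp\).
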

\begin{proof}
    We denote the solution operator of~\cref{eq:poisson} as \(\laplace_{\alpha'}\inv\).
    By identifying \(\W<1,\alpha><\emptyset>\) with its bi-dual space, we
        note that the adjoint operator
        \(
        {({\laplace_{\alpha'}}^{-1})}^*: \Wperp\to \W<1,\alpha><\emptyset>
        \)
        is continuous as well with \(\norm{{({\laplace_{\alpha'}}^{-1})}^*} \leq
        \norm{{\laplace_{\alpha'}}^{-1}}\).
    Moreover, we observe, that
        \({({\laplace_{\alpha'}}^{-1})}^* = {({\laplace_{\alpha'}}^*)}^{-1}\) and
        \begin{equation*}
        \laplace_{\alpha'}^* = \div \grad : \W<1,\alpha><\emptyset>\to\Wperp\,.
    \end{equation*}
    Hence, the elliptic equation
        \begin{equation*}
            \intO \grad y \cdot \grad \psi \dleb = \sca \psi \bd \nu\qquad \forall\psi\in\Wempty
        \end{equation*}
        has a unique solution \(y\in \Wa<\emptyset>\) for all \(\nu\in\Wperp\).
    By setting \(q = -\grad y\), we find \(\div q = \nu\) in \(\Wperp\), which shows the
        surjectivity of \(\div\) from
        \(\La*\) to \(\Wperp\).
\end{proof}

\begin{remark}
    Due to~\cref{remark:embedding}~\labelcref{remark:embedding:one},~\cref{thm:div:surjective}
        also implies the surjectivity of \(\mdiv\).
\end{remark}

Finally, we obtain an existence result.

\begin{corollary}
    Let~\cref{assmpt:general,assmpt:laplace} hold.
    For ever \(\epsilon>0\) there is a unique solution for problem~\cref{eq:reg_beckmann}.
\end{corollary}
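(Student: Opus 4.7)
The plan is to prove this by the direct method of the calculus of variations, then obtain uniqueness from strict convexity.

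First I would check that the feasible set is non-empty: by \cref{thm:div:surjective}, the divergence operator \(\div:\La* \to \Wperp\) is surjective, and by \cref{rem:mu_in_wperp} we have \(\mu\in\Wperp\), so there exists at least one \(q_0\in\La*\) with \(\div q_0 = \mu\). Next, coercivity: the functional
\[
J(q) := \intO w\,\abs{q}\dleb + \frac{\epsilon}{\alpha}\Lanorm{q}^\alpha
\]
satisfies \(J(q) \geq \frac{\epsilon}{\alpha}\Lanorm{q}^\alpha\) since \(w \geq 0\), so any minimizing sequence \((q_n)\) is bounded in \(\La*\).

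Then I would extract a weakly convergent subsequence \(q_n \rightharpoonup q^\star\) in \(\La*\), using reflexivity of \(\La*\) (since \(1 < \alpha < \infty\)). The feasibility of \(q^\star\) follows from the second part of the corollary after \cref{def:div}: weak continuity of \(\div\) gives \(\div q^\star = \mu\) in \(\Wperp\). For the objective, both terms are convex and continuous on \(\La*\) — the second is the \(\alpha\)-th power of a norm, and the first is the integral of \(q \mapsto w(x)\abs{q(x)}\), where \(w \geq 0\) and \(|\cdot|\) is convex — hence weakly lower semicontinuous. Therefore
\[
J(q^\star) \leq \liminf_{n\to\infty} J(q_n) = \inf J,
\]
which gives existence.

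For uniqueness, I would use strict convexity of the second term: the map \(q \mapsto \frac{\epsilon}{\alpha}\Lanorm{q}^\alpha\) is strictly convex on \(\La*\) because \(\alpha > 1\) (the function \(t \mapsto t^\alpha\) is strictly convex on \([0,\infty)\), combined with the strict convexity of the \(L^\alpha\) norm on a reflexive \(L^p\) space for \(p>1\)). The first term is convex and the feasible set \(\{q : \div q = \mu\}\) is affine, so \(J\) is strictly convex on the feasible set, which gives uniqueness.

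The only mildly delicate point is the weak lower semicontinuity of \(q \mapsto \intO w\abs{q}\dleb\); but since \(w\) is continuous and non-negative on the compact set \(\Omega\) and \(q\mapsto w(\cdot)\abs{q(\cdot)}\) is a convex normal integrand, standard results (e.g. \cite{Santambrogio2015}) apply. I expect no real obstacle here beyond citing the appropriate lsc result.
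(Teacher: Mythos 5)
Your proof is correct and follows essentially the same route as the paper: non-emptiness of the feasible set via \cref{thm:div:surjective} and \cref{rem:mu_in_wperp}, boundedness of a minimizing sequence from the regularization term, weak compactness of \(\La*\), weak lower semicontinuity of the objective, and uniqueness from the strict convexity supplied by the \(\LL\alpha\) term. You spell out feasibility of the weak limit and coercivity more explicitly than the paper, but there is no substantive difference in method.
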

\begin{proof}
    First note that due to~\cref{rem:mu_in_wperp} it holds \(\mu\in \Wperp\), so that
        by~\cref{thm:div:surjective} the feasible set is non-empty.

    Let now \((q_n)\subset\M\) be a minimizing sequence.
    Without loss of generality we assume that each \(q_n\) is feasible and due to the
        regularization term \((\dens{q_n})\) is bounded in \(\La\).
    We can thus extract a weakly convergent subsequence (denoted by the same symbol) with weak
        limit \(\bar q\in\La*\).
    As \(w\in\CC\embed\Lap\), the objective functional is clearly lower semi continuous in \(\La\)
        and thus, \(\bar q\) is a solution to~\cref{eq:reg_beckmann}.

    Uniqueness of the solution follows trivially from the strict convexity of
        \(\norm{\blank}_{\La}\).
\end{proof}

\section{Semi-Smooth Newton}\label{sec:ssn}

We first derive the first order optimality system for~\cref{eq:reg_beckmann}.

\begin{proposition}
    There exists a Lagrange multiplier \(y\in\Wempty\) such that the solution \(q\)
        of~\cref{eq:reg_beckmann} fulfills
        \begin{align}
            \epsilon\abs{q}^{\alpha-2} q + \partial \abs{q}_{1,w} + \grad y & \ni 0 \qquad\text{in
            }
            \Lap*
            \label{eq:gradeq}
            \\
            \div q                                                          & = \mu \qquad\text{in
            } \Wperp\,, \label{eq:constr}
        \end{align}
        where \(\abs{q}_{1,w}(x) := w(x)\abs{q(x)}\).
\end{proposition}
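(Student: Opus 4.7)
The plan is to derive the system~\labelcref{eq:gradeq,eq:constr} as the first-order optimality condition of the unconstrained convex reformulation
\[
\min_{q\in\La*}\; G(q) + H(q) + \ind_C(q),
\]
with $G(q):=\intO w\abs{q}\dleb$, $H(q):=\tfrac\epsilon\alpha\Lanorm{q}^\alpha$, and $C := \set{q\in\La*}{\div q=\mu}$. The Fermat inclusion $0 \in \partial(G + H + \ind_C)(q)$ will then be split via the convex subdifferential sum rule, with the multiplier $y$ emerging from $\partial\ind_C(q)$, the Gâteaux derivative $\epsilon\abs{q}^{\alpha-2}q$ from $\partial H(q)$, and the subgradient set $\partial\abs{q}_{1,w}$ from $\partial G(q)$.

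To justify the sum rule I first observe that $C$ is non-empty by~\cref{rem:mu_in_wperp,thm:div:surjective}, that $H$ is Fréchet-differentiable on $\La*$ with derivative $\epsilon\abs{q}^{\alpha-2}q \in \Lap*$ (since $\alpha>1$), and that $G$ is proper, convex, and continuous on $\La*$ because $w$ is bounded on the compact set $\Omega$. Since $G+H$ is thus finite and continuous on all of $\La*$, the Moreau--Rockafellar sum rule applies both within $G+H$ and across $\ind_C$, yielding
\[
\partial(G + H + \ind_C)(q) = \partial\abs{q}_{1,w} + \epsilon\abs{q}^{\alpha-2}q + \partial\ind_C(q).
\]

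It remains to identify $\partial\ind_C(q)$ with $\grad(\Wempty)$. Since $C$ is the affine subspace $q + \ker\div$, one has $\partial\ind_C(q) = (\ker\div)^\perp$. By~\cref{thm:div:surjective}, $\div:\La*\to\Wperp$ is a bounded surjection, hence has closed range; together with the reflexivity of $\La*$, the closed range theorem gives $(\ker\div)^\perp = \range(\div^*)$. The earlier corollary identifies $\div^* = -\grad:\Wempty\to\Lap*$, and $\grad$ is injective on $\Wempty$ since a constant with vanishing mean is zero. Therefore, every element of $\partial\ind_C(q)$ can be written uniquely as $\grad y$ for some $y\in\Wempty$, and substituting into the Fermat inclusion produces exactly~\labelcref{eq:gradeq,eq:constr}.

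The main technical point is the splitting of the subdifferential across the indicator $\ind_C$, which ordinarily requires a Slater-type constraint qualification. This is bypassed cleanly here because $G+H$ is finite and continuous on the entire space, so the sum rule reduces to the closedness of $\range(\div)$, which is automatic from the surjectivity established in~\cref{thm:div:surjective}.
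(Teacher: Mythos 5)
Your proposal is correct and follows essentially the same route as the paper: decompose the objective with the indicator $\ind_C$ of the affine feasible set, invoke the convex sum rule (justified by continuity of the data-fidelity and regularization terms on all of $\La*$ together with non-emptiness of $C$), and identify $\partial\ind_C(q) = {\ker(\div)}^\perp = \textup{ran}(\div^*) = \grad(\Wempty)$ via the closed range of the surjection $\div$. The paper phrases the last step as a variational inequality and cites Luenberger rather than the closed range theorem directly, but the argument is the same.
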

\begin{proof}
    Let us denote \(C := \{ q\in \La*  : \div q = \mu \}\) such that~\cref{eq:reg_beckmann} is
        equivalent to
        \begin{equation*}
        \inf_{q\in\La*} \intO w \abs{q}\dleb + \frac\epsilon\alpha \Lanorm{q}^\alpha + \ind_C(q).
    \end{equation*}
    Since the first two addends of the objective are continuous on the whole space \(\La*\)
        and \(C\) is nonempty due to~\cref{thm:div:surjective}, the sum rule for convex
        subdifferentials
        is applicable, which gives that the solution \(q\) of~\cref{eq:reg_beckmann} satisfies
        \begin{align*}
            0          & \in	\epsilon\abs{q}^{\alpha-2} q + \partial \abs{q}_{1,w} + \partial
            \ind_C(q)                                                                        \\
            \qquad \Leftrightarrow \quad
            \exists\xi & \in \partial \abs{q}_{1,w}, \quad
            \intO (\epsilon\abs{q}^{\alpha-2} q + \xi)(p - q) \dleb\geq 0 \quad \forall p \in C
            \\
            \qquad \Leftrightarrow \quad
            \exists\xi & \in \partial \abs{q}_{1,w}, \quad
            \epsilon\abs{q}^{\alpha-2} q + \xi \in {\ker(\div)}^\perp = \textup{ran}(\div^*),
        \end{align*}
        where we employed~\cite[§6.6, Theorem~2]{luenberger},
        which holds due to
        the surjectivity of \(\div\) by~\cref{thm:div:surjective}.
    Since \(\div^* = - \grad\), this gives the assertion.
\end{proof}
We observe that the multi-valued map
    \[
        (x,q) \mapsto \epsilon \abs{q}^{\alpha-2} q + w(x)\partial\abs{q}
    \]
    has a single-valued inverse, which we denote by
    \begin{equation}\label{eq:defF}
    \F: \Omega\times\RR^d\to\RR\,,\quad\F(x,p) =
        {\Big( \frac{1}{\epsilon} \big(\abs{p} - w(x)\big)\Big)}_+^{\alpha'-1}
    \frac{p}{\abs{p}}\,.
\end{equation}
Since~\cref{eq:gradeq} is a pointwise equation (as identity in \(\Lap*\)), this yields
    that~\cref{eq:gradeq}--\cref{eq:constr} are equivalent to
    \begin{equation}\label{eq:y-condition}
    \div \F(-\grad y) = \mu \quad \text{in } \Wperp\,.
\end{equation}
where the Nemytskii-operator \(\F\) maps \(\Lap*\) to \(\La*\).
By~\cref{def:div}, the weak form of~\cref{eq:y-condition} is given by
    \begin{equation}\label{eq:ypde}
    - \intO \F(-\grad y) \cdot \grad \phi\dleb
    = \intO \phi\dmu
    \quad \forall\, \phi \in \Wempty\,.
\end{equation}

We can now formally write down a semi-smooth Newton iteration for solving~\cref{eq:ypde} as
    follows.
\begin{algorithm}[H]
\caption{Semi-Smooth Newton Iteration for solving~\cref{eq:ypde}}\label{alg:ssn}
\begin{algorithmic}
\Require{} \(y\in\Wempty\)
\For{\(k=1,\ldots\)}
\State{}
Choose a step size \(\sigma_{k}>0\)
    \State{} find \(\eta\in\Wempty\) such that \(\forall\, \phi \in \Wempty\)
    \begin{equation}\label{eq:ssn:step}
        \intO \of{\Deriv_p\F(-\grad y) \grad \eta} \cdot \grad \phi\dleb
        =
        \intO \F(-\grad y) \cdot \grad \phi\dleb + \intO \phi\dmu
    \end{equation}
    \State{} Update \(y \gets y + \sigma_k\eta\)
    \EndFor{}
    \end{algorithmic}
    \end{algorithm}

    \begin{remark}
    We emphasize that~\cref{eq:ssn:step} is purely formal.
For~\cref{alg:ssn} to converge, we would need for \(\F\) to be Newton-differentiable from
    \(\Lap*\) to \(\La*\) and existence of solutions to~\cref{eq:ssn:step} in the appropriate
    spaces.
While the latter issue will be resolved by an additional Huber-regularization,
    see~\cref{eq:ypde:reg} below, the Newton-differentiability probably requires an additional
    smoothing step, as applied for instance in~\cite[Section 6.1]{Ulbrich2002}.
This is subject to future research.
\end{remark}

Due to the positive part in~\cref{eq:defF}, \(\F(x,p)\) has vanishing slope for \(\abs{p}\leq w\),
    which will clearly lead to illposedness of the Newton step~\cref{eq:ssn:step}.
As mentioned above, to overcome this
    issue, we introduce a Huber type regularization term~\cite{Huber1964} \(\R\) of the form
    \begin{equation*}
        \R: \Omega\times\RR^d\to\RR^d,\quad
        \R(x,p) = \frac{\delta p}{\max(\abs{p},w)}\,,
    \end{equation*}
    where \(\delta>0\) is a regularization parameter.
Denoting \(\G:=\F + \R\)
    we thus replace~\cref{eq:ypde} by
    \begin{equation}\label{eq:ypde:reg}
    - \intO \G(-\grad y) \cdot \grad \phi\dleb
    = \intO \phi\dmu
    \quad \forall\, \phi \in \Wempty.
\end{equation}
and~\cref{eq:ssn:step} by
\begin{equation}\label{eq:ssn:step:reg}
    \begin{split}
        \intO \Deriv_q\G(-\grad y) &\grad \eta \cdot \grad \phi\dleb
        = \\
        & \intO \G(-\grad y) \cdot \grad \phi\dleb + \intO \phi\dmu
        \quad \forall\, \phi \in \Wempty\,.
    \end{split}
\end{equation}

\subsection{Step Size Rule}

In order to apply Armijo bracktracking, we lift~\cref{eq:y-condition} to a minimization problem.
To that end, we observe that both \(\F\) and \(\R\) admit an antiderivative (\wrt \(p\)), namely
    \begin{align*}
    \calF: \Omega\times\RR^d\to\RR^d\,,\quad\calF(x, p) &:=
    \frac{\epsilon}{\alpha'}
    {\Big(\frac{1}{\epsilon}\,\max\big\{\abs{p} - w, 0\big\}\Big)}^{\alpha'}\text{
        and}%
    \\
    \calR: \Omega\times\RR^d\to\RR^d\,,\quad\calR(x,p) &:= \delta\, \max\big\{\abs{p}, w\big\} +
    \frac\delta 2 \,\min\Big\{\frac{\abs{p}^2}{w}, w\Big\}\,.
\end{align*}
More precisely, we obtain the following result.

\begin{lemma}
    Both \(\calF,\calR:\Lap*\to \L1*\) are Gateaux-differentiable with Gateaux-derivatives given
        by
        \begin{align*}
            \bd \calF(p;\psi) & = \F(p)\cdot \psi\,, \\
            \bd \calR(p;\psi) & = \R(p)\cdot \psi
        \end{align*}
        respectively, where \(\F,\R:\Lap*\to \La*\).
\end{lemma}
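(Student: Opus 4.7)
The plan is to verify pointwise differentiability of the integrands $\calF(x, \cdot)$ and $\calR(x, \cdot)$ on $\RR^d$ and then to lift this to Gateaux differentiability of the associated Nemytskii operators via the mean value theorem combined with Lebesgue's dominated convergence theorem.

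For the pointwise step I would distinguish the regimes $\abs{p} > w(x)$, $\abs{p} < w(x)$, and $\abs{p} = w(x)$. On $\{\abs{p} > w\}$ the chain rule applied to $\calF(x, p) = \tfrac{\epsilon}{\alpha'}\bigl(\epsilon^{-1}(\abs{p} - w)\bigr)^{\alpha'}$ yields $\nabla_p\calF(x, p) = \F(x, p)$, while on $\{\abs{p} < w\}$ both $\calF$ and $\F$ vanish; at the transition $\abs{p} = w$ the outer factor $(\cdot)^{\alpha'-1}$ vanishes at zero thanks to $\alpha' > 1$, so the identity extends continuously and $\calF(x, \cdot)$ is in fact $C^1$. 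The computation for $\calR$ is analogous: on $\{\abs{p} < w\}$ one has $\calR = \delta w + \tfrac{\delta}{2}\abs{p}^2/w$ with gradient $\delta p/w$, on $\{\abs{p} > w\}$ one has $\calR = \delta\abs{p} + \tfrac{\delta}{2} w$ with gradient $\delta p/\abs{p}$, and both expressions coincide with $\R(x, p)$ and match continuously at $\abs{p} = w$. I would also record the bounds $\abs{\F(x, p)} \leq (\abs{p}/\epsilon)^{\alpha'-1}$ and $\abs{\R(x, p)} \leq \delta$; combined with the identity $\alpha(\alpha'-1) = \alpha'$, the first shows $\F : \Lap* \to \La*$ and the second merely that $\R$ is bounded, so Hölder's inequality ensures $\F(p)\cdot\psi, \R(p)\cdot\psi \in \L1$ for every $p, \psi \in \Lap*$.

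For the Gateaux differentiability itself, I would fix $p, \psi \in \Lap*$ and take $t \to 0$ with, say, $\abs{t} \leq 1$. Since $\calF(x, \cdot)$ and $\calR(x, \cdot)$ are $C^1$, the pointwise mean value theorem produces $\theta_t(x) \in (0, 1)$ with
\[
\frac{\calF(x, p(x) + t\psi(x)) - \calF(x, p(x))}{t} = \F\bigl(x, p(x) + t\theta_t(x)\psi(x)\bigr)\cdot\psi(x),
\]
and an analogous identity for $\calR$ with $\R$ in place of $\F$. Pointwise convergence of the right-hand side to $\F(x, p(x))\cdot\psi(x)$ (resp.\ $\R(x, p(x))\cdot\psi(x)$) as $t \to 0$ follows from the continuity of $\F(x, \cdot)$ and $\R(x, \cdot)$. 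The main obstacle is supplying an $\L1$-dominant: for $\calF$ the pointwise bound above gives
\[
\abs{\F(x, p + t\theta_t\psi) \cdot \psi} \leq \epsilon^{1-\alpha'}(\abs{p} + \abs{\psi})^{\alpha'-1}\abs{\psi},
\]
which lies in $\L1$ by Hölder's inequality because $(\abs{p}+\abs{\psi})^{\alpha'-1} \in \La$ and $\abs{\psi} \in \Lap$. For $\calR$ the easier uniform estimate $\delta\abs{\psi} \in \Lap \embed \L1$ on the difference quotient suffices. Lebesgue's dominated convergence theorem then yields the claimed $\L1$-convergence of the difference quotients to $\F(p)\cdot\psi$ and $\R(p)\cdot\psi$, completing the proof.
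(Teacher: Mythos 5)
Your proposal is correct and follows the same overall route as the paper: compute the pointwise derivatives of the integrands, then pass to the Nemytskii operators via dominated convergence, using Hölder's inequality to control the relevant quantities. What you add, and what makes your write-up tighter, is the explicit use of the mean value theorem to rewrite the difference quotient as $\F(x, p + t\theta_t\psi)\cdot\psi$ (resp.\ $\R$), which produces a $t$-uniform integrable dominant $\epsilon^{1-\alpha'}(\abs{p}+\abs{\psi})^{\alpha'-1}\abs{\psi}$ (resp.\ $\delta\abs{\psi}$). The paper's proof jumps from the pointwise limit to ``it suffices to show that the right hand side is a function in $\L1$'' and then estimates $\intO\abs{\F(p)\cdot\psi}\dleb$, but a bound on the limit alone is not a dominant for the difference quotients, so in a strict reading the paper leaves this step implicit; your MVT argument closes that gap cleanly. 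One further point your proposal takes care of that the paper does not emphasize: verifying that $\calF(x,\cdot)$ and $\calR(x,\cdot)$ are genuinely $C^1$ across the matching set $\{\abs{p}=w\}$, which is needed for the mean value theorem to apply without further case distinctions. In short, same strategy, but your version is the more careful one.
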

\begin{proof}
    Let now \(p,\psi\in \Lap*\).
    Elementary calculations show
        \[
            \lim_{t\to 0} \frac {\calF(x, (p+t\psi)(x)) - \calF(x, p(x))}t \to	\F(x, p(x)) \cdot
            \psi(x)
        \]
        for a.e.
    \(x\in\Omega\) and similarly for \(\calR\).
    By Lebesgue's dominated convergence
        theorem, it suffices to show that the right hand side is a function in \(\L1*\) and the
        mapping
        \(\psi\mapsto \F(p)\cdot \psi\) is continuous.

    To that end, note that by Hölder's inequality
        \begin{align*}
        \intO \abs{ \F(x, p(x)) \cdot \psi(x)} \dx
        & \leq \epsilon^{1-\alpha'} \norm{\psi}_{\Lap} \norm{{(\abs{p}-w)}_+^{\alpha'-1}}_{\La}\\
        & \leq \epsilon^{1-\alpha'} \norm{\psi}_{\Lap*} \norm{p}_{\Lap*} < \infty\,.
    \end{align*}
    For \(\calR\), we obtain the result by
        \begin{align*}
            \intO \abs{ \R(x, p(x)) \cdot \psi(x)} \dx
             & \leq \delta \intO \frac {\abs{q}\abs{\psi}}{\max(\abs{q},w)}\dleb
            \leq \delta \int_{\{q(x)\neq 0\}} \frac{\abs{q}}{\abs{q}}\abs{\psi}\dleb
            \\
             & \leq \delta\norm{\psi}_{\L1*} \leq \delta\abs{\Omega}^{\frac
                1r}\norm{\psi}_{\Lap*}<
            \infty\,,
        \end{align*}
        where \(1 = \frac 1{\alpha'} + \frac 1r\).
\end{proof}

Analogously to \(\G\), we will denote \(\calG := \calF + \calR\).

In light of the above differentiability results, we observe that~\cref{eq:ypde:reg} is nothing
    else than the necessary optimality conditions of
    \begin{equation}\label{eq:ymin}\tag{BP\(_\dagger\)}
    \min_{y \in \Wempty}
    \calJ(y) := \intO \calG(-\grad y)\dleb - \intO y\dmu.
\end{equation}

As \(\calF\) and \(\calR\) are convex,~\cref{eq:ypde:reg} is indeed sufficient for optimality so
    that~\cref{eq:ymin} is equivalent to~\cref{eq:ypde:reg}.
More precisely, \(\calG\) is uniformly convex for \(\delta>0\), as \(\tfrac{\alpha}{\alpha-1}\geq
    2\).
Now, we can peform a classical Armijo backtracking for \(\calJ\) as detailed in~\cref{alg:armijo}.
Note that
    \begin{equation*}
        \Deriv_y\calJ(y)\eta = -\intO \G(-\grad y) \cdot \grad \eta\dleb
        - \intO \eta\dmu
    \end{equation*}
    so that the Armijo condition in~\cref{alg:armijo} can be written as
    \begin{equation*}
    \begin{split}
    \intO \calG(-\grad y &- \sigma_k \grad\eta_k)\dleb
    > \intO \calG(-\grad y)\dleb\\
    &- \gamma\sigma_k\intO \G(-\grad y) \cdot \grad \eta\dleb
    +\sigma_k(1-\gamma)\intO\eta\dmu\,.
\end{split}
\end{equation*}

\begin{algorithm}
    \caption{Armijo line search for~\cref{eq:ymin}}\label{alg:armijo}
    \begin{algorithmic}
        \Require{} \(y, \eta\in\Wempty\),  \(\sigma_0 > 0\),
        \(\beta, \gamma \in \, (0, 1)\)
        \State{} \(k\gets 0\)
        \While{} {\(\calJ(y + \sigma_k \eta) > \calJ(y) + \gamma\,\sigma_k \Deriv_y\calJ(y)\eta\)}
        \State{} \(\sigma_{k+1} \gets \beta \sigma_k\)
        \State{} \(k \gets k+1\)
        \EndWhile{}
    \end{algorithmic}
\end{algorithm}

\subsection{Connection to Primal Problem}

We want to analyze the connection between problem~\cref{eq:reg_beckmann} and
    problem~\cref{eq:ymin}.

\begin{lemma}\label{thm:calG:conj}
    The Fenchel conjugate \(\calG*\) \wrt the second variable is given by
        \[
        \calG*(x,q) =
        \begin{cases}
        \abs{q}^2 \frac{w}{2\delta} - \delta w\,,&\abs{q}\leq \delta\,,\\
        \frac{\epsilon}{\alpha}{(\abs{q}-\delta)}^\alpha - \frac 32 \delta w +
        \abs{q}w\,,&\text{else}\,.
    \end{cases}
    \]
    Moreover, \(\calG\) is a normal integrand in the sense of~\cite[Definition
        14.27]{rockafellar:1998}.
\end{lemma}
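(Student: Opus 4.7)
Both summands of $\calG(x,p)=\calF(x,p)+\calR(x,p)$ depend on $p$ only through $|p|$, so for fixed $x$ the map $p\mapsto \calG(x,p)$ is radially symmetric. Thus, for any $q\in\RR^d$, the supremum
\[
\calG^*(x,q) \;=\; \sup_{p\in\RR^d} \bigl\{q\cdot p - \calG(x,p)\bigr\}
\]
is attained at some $p$ parallel to $q$, and the computation reduces to the scalar problem
\[
\calG^*(x,q) \;=\; \sup_{t\ge 0}\bigl\{|q|\,t - g(x,t)\bigr\},
\qquad g(x,t):=\calG(x,t\hat q),
\]
for any unit vector $\hat q$. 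I would split $g(\cdot,t)$ according to the two pieces inherent in $\calF$ and $\calR$: for $t\le w(x)$ one has $\calF(x,p)=0$, $\max\{t,w\}=w$ and $\min\{t^2/w,w\}=t^2/w$, giving $g(x,t)=\delta w + \tfrac{\delta t^2}{2w}$; for $t>w(x)$ one gets $g(x,t)=\tfrac{1}{\alpha'\epsilon^{\alpha'-1}}(t-w)^{\alpha'}+\delta t + \tfrac{\delta w}{2}$.

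Next, I would compute the conjugate by solving $\partial_t g(x,t)=s$ with $s:=|q|$. On the first piece $\partial_t g = \delta t/w$, which gives $t=sw/\delta\in[0,w]$ exactly when $s\in[0,\delta]$; substituting back yields $s\cdot(sw/\delta) - \delta w - \tfrac{\delta}{2w}(sw/\delta)^2 = \tfrac{s^2 w}{2\delta} - \delta w$, matching the first branch. On the second piece $\partial_t g = \epsilon^{1-\alpha'}(t-w)^{\alpha'-1}+\delta$, so $t = w + \epsilon(s-\delta)^{\alpha-1}$ for $s\ge\delta$, using the identity $1/(\alpha'-1)=\alpha-1$. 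Substituting into $st-g(x,t)$, the key algebraic cancellation is $(\alpha-1)\alpha'=\alpha$, so that $(\epsilon(s-\delta)^{\alpha-1})^{\alpha'} = \epsilon^{\alpha'}(s-\delta)^\alpha$, and the combination $(s-\delta)\cdot\epsilon(s-\delta)^{\alpha-1} - \tfrac{\epsilon}{\alpha'}(s-\delta)^\alpha$ simplifies to $\tfrac{\epsilon}{\alpha}(s-\delta)^\alpha$, producing the second branch. As a consistency check I would verify that both branches agree at $s=\delta$, which gives the value $-\tfrac{\delta w}{2}$.

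For the normal integrand statement, I would note that $(x,p)\mapsto\calF(x,p)$ and $(x,p)\mapsto\calR(x,p)$ are jointly continuous on $\Omega\times\RR^d$: this follows because $w$ is continuous by Assumption~\ref{assmpt:general}, and the maps $(a,t)\mapsto\max\{a,t\}$, $(a,t)\mapsto\min\{a,t\}$, $r\mapsto r_+^{\alpha'}$, $p\mapsto|p|$ are continuous. Hence $\calG$ is jointly continuous, in particular a Carath\'eodory function, which by \cite[Example~14.29]{rockafellar:1998} is a normal integrand in the sense of Definition~14.27.

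The main obstacle is the algebraic bookkeeping in the second branch: correctly tracking the exponents through the conjugate pair via $1/(\alpha'-1)=\alpha-1$ and $(\alpha-1)\alpha'=\alpha$, and collecting the constants $-\delta w, \tfrac{\delta w}{2}, \tfrac{\epsilon}{\alpha'}$ versus $\tfrac{\epsilon}{\alpha}$ to obtain the stated $-\tfrac{3}{2}\delta w + |q|w$ term. Once these identities are in place, everything else is routine one-dimensional calculus.
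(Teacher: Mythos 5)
Your proposal is correct and follows essentially the same route as the paper: the conjugate is computed by solving the first-order stationarity condition for the maximizer and substituting back (you phrase it as a 1D radial reduction; the paper equivalently uses the explicit inverse $\G(x,\cdot)^{-1}$ where $\G = \partial_p \calG$), and the normal-integrand claim is obtained from the Carath\'eodory property of $\calG$ via the corresponding example in Rockafellar--Wets. Your consistency check at $|q|=\delta$ (both branches giving $-\tfrac{\delta w}{2}$) and the exponent bookkeeping $1/(\alpha'-1)=\alpha-1$, $(\alpha-1)\alpha'=\alpha$, $1-1/\alpha'=1/\alpha$ are all carried out correctly.
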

\begin{proof}
    We begin by deriving the conjugate of \(\calG\).
    First note that
        \begin{equation*}
        \grad_p (s\cdot p - \calG(x,p)) = s - \G(x,p)\,.
    \end{equation*}
    We then observe
        \begin{equation*}
            {\G(x,\blank)}^{-1}(z) =
            \begin{cases}
                z \frac{w}\delta\,, & \abs{z}\leq\delta\,, \\
                \frac{z}{\abs{z}}\Big(\epsilon {(\abs{z} - \delta)}^{\alpha-1} +
                w\Big)\,,           & \text{else}\,,
            \end{cases}
        \end{equation*}
        so that we can insert \(p := {\G(x,\blank)}^{-1}(s)\) into \(s\cdot p - \calG(x,p)\).
    By
        straightforward manipulations, the first claim follows.

    For the second claim, we first note that \(\La*\) is decomposable relative to the Lebesgue
        measure in the sense of~\cite[Definition 14.59]{rockafellar:1998}.
    The assertion then follows by~\cite[Example 4.29]{rockafellar:1998} as \(\calG(x,\blank)\) is
        continuous for all \(x\in\Omega\) and \(\calG(\blank, p)\) is measurable for all
        \(p\in\RR^d\).
\end{proof}

Using the above result, we can characterize the connection as follows.

\begin{theorem}\label{thm:connection:primal}
    The predual problem to
        \begin{equation}\tag{BP\(_{\epsilon,\delta}\)}\label{eq:double_reg_beckmann}
            -\inf\left\{ \intO \calG*(q)\dleb \,\middle|\,	q\in\M\,,q\ll \leb^d\,, \dens q
            \in
            \La,
            \div \dens q = \mu \text{ in } \Wperp \right\}
        \end{equation}
        is given by~\cref{eq:ymin} and strong duality holds.
\end{theorem}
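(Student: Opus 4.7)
The plan is to apply Fenchel--Rockafellar duality in the standard form, with the primal variable being the dual multiplier $y$ and the dual variable the flow $q$. I would introduce the linear operator $\Lambda := -\grad : \Wempty \to \Lap^*$ and write
\[
    \calJ(y) = F(y) + G(\Lambda y), \quad F(y) := -\intO y\dmu, \quad G(p) := \intO \calG(x,p(x))\dleb,
\]
so that \eqref{eq:ymin} is exactly the Fenchel primal problem $\inf_{y\in\Wempty} F(y) + G(\Lambda y)$.

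The next step is to compute the three conjugate-type objects. For $F$, which is a bounded linear functional, the conjugate on $\Wperp \simeq (\Wempty)^*$ is the indicator $F^*(y^*) = \ind_{\{-\mu\}}(y^*)$. For $\Lambda$, using the identity $\div^* = -\grad$ from the earlier corollary together with reflexivity of $\La^*$, one gets $\Lambda^* = \div : \La^* \to \Wperp$. For $G$, the crucial ingredient is that $\calG$ is a normal convex integrand (as established in \cref{thm:calG:conj}) together with the growth bound $\calG(x,p) \leq C(1+|p|^{\alpha'})$ that makes $G$ finite and continuous on $\Lap^*$; then \cite[Theorem 14.60]{rockafellar:1998} gives
\[
    G^*(q) = \intO \calG^*(x,q(x))\dleb \quad \text{for } q \in \La^*.
\]
Assembling these, the Fenchel dual reads $\sup_{q\in\La^*}\bigl[-F^*(\div q) - G^*(-q)\bigr]$, which collapses via the constraint $\div q = -\mu$. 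Using the evenness $\calG^*(x,-q) = \calG^*(x,q)$ (visible from the formula in \cref{thm:calG:conj}) and substituting $q \leftarrow -q$ yields exactly the objective and constraint of \eqref{eq:double_reg_beckmann}; the identification of $q \in \La^*$ with an absolutely continuous measure via $\I$ is harmless.

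For the strong duality part, it suffices to verify the standard Rockafellar qualification: there exists $y_0 \in \operatorname{dom} F$ such that $G$ is continuous at $\Lambda y_0$. Taking $y_0 = 0$ gives $\Lambda y_0 = 0$, and the growth estimate on $\calG$ together with finiteness of $\calG(\cdot,0) = \delta w \in \L1$ ensures that $G$ is continuous on all of $\Lap^*$, in particular at $0$. Hence \cite[Theorem~III.4.1]{ekeland:1999} (or any equivalent Fenchel--Rockafellar statement) applies and strong duality holds, with the dual supremum attained.

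The main technical hurdle will be the normal-integrand/interchange step giving $G^* = \int \calG^*$; once that is in place, the rest is bookkeeping of signs and the symmetry $\calG^*(\cdot, -q) = \calG^*(\cdot, q)$. A secondary point to double-check is the sign convention for $\Lambda^*$ so that the constraint ends up as $\div q = \mu$ rather than $\div q = -\mu$ in the reformulated problem.
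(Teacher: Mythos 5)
Your proposal is correct and follows essentially the same route as the paper: decompose $\calJ$ into a linear part $F(y)=-\intO y\dmu$ and an integral functional $G(p)=\intO\calG(p)\dleb$ of the normal integrand $\calG$, identify $F^*$ as the indicator of $\{-\mu\}$ and $G^*$ via \cref{thm:calG:conj}, and invoke Fenchel--Rockafellar duality. The only inessential difference is that you invoke evenness of $\calG^*$ to fix the sign, whereas the substitution $q\mapsto -q$ already handles both the constraint sign and the argument of $G^*$ simultaneously, and you spell out the constraint qualification that the paper leaves as a ``standard argument.''
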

\begin{proof}
    Let
        \[
            f:\CC\to\RR\cup\{\pm\infty\}\,,\quad
            f(\xi) =
            \begin{cases}
                - \dual \xi\mu\,, & \xi\in \Wempty\,, \\
                \infty\,,         & \text{else}
            \end{cases}
        \]
        and \(g:\CC*\to \RR\cup\{\pm \infty\}\), \(g(\xi) = \intO\calG(\xi)\dleb\).
    It is easy to see that
        \[
        f^*(\nu) = \iota_{\{-\mu\}}(\nu) =
        \begin{cases}
        0\,,&\nu = -\mu \text{ in } \Wperp\,,\\
        \infty\,,&\text{else.
    }
    \end{cases}
    \]
    Finally, by~\cref{thm:calG:conj}, \(g^*(\nu) = \intO \calG*(q)\dleb \).
    The assertion then
        follows by standard arguments, see e.g.~\cite[Theorem 4.4.3]{borwein:2005}.
\end{proof}

\section{Approximation Results}\label{sec:approx}

Next we turn to results on approximation properties.
More precisely, we show that minimizers of
    the regularized problems converge to minimizers of~\cref{eq:beckmann} under suitable
    assumptions.

Recall from, e.g.,~\cite{braides:2002}, that a sequence \((F_n)\) of functionals \(F_n:X\to
    \RR\cup\{\infty\}\) on a metric space \(X\) is said to \(\Gamma\)-converge to a functional
    \(F:X\to\RR\cup\{\infty\}\), written \(F = \Gammalim_{n\to\infty} F_n\), if
    \begin{enumerate}
    \item for every sequence \(\{x_n\}\subset X\) with \(x_n\to x\), it holds
    \(%
    F(x) \leq \liminf_{n\to\infty} F_n(x_n)
    \) and%
    \item for every \(x\in X\), there is a sequence \(\{x_n\}\subset X\) with \(x_n\to x\) and
    \(%
    F(x) \geq \limsup_{n\to\infty} F_n(x_n)%\,.
    \).
This sequence is also called a \emph{recovery sequence}.
\end{enumerate}
It is a straightforward consequence of this definition that if \(F_n\) \(\Gamma\)-converges to
    \(F\) and \(x_n\) is a minimizer of \(F_n\) for every \(n\in \N\), then every cluster point of
    the
    sequence \((x_n)\) is a minimizer to \(F\).
Furthermore, \(\Gamma\)-convergence is stable under perturbations by continuous functionals.

To prove the desired approximation results, we will rely on smoothing of measures in order to
    construct the necessary recovery sequences.
Moreover, we need the following technical assumption.
\begin{assumption}\label{assmpt:starshaped}
    Assume that \(\Omega\) is strictly star shaped \wrt \(0\), \ie for all \(x\in\Omega\) and
        \(0\leq \lambda<1\), it holds \(\lambda x \in \Omega^\circ\).
\end{assumption}
\begin{remark}
    We leverage~\cref{assmpt:starshaped} in~\cref{thm:laplace_eps_continuous} below.
    However, while we only use a linear transformation of the domain in the following, the
        techniques we use in the proof of~\cref{thm:laplace_eps_continuous} could be applied in
        more general settings of nonlinear bi-Lipschitz deformations which would allow us to relax
        this assumption.
    We still focus on star shaped domains for the sake of brevity.
    Additionally,~\cref{assmpt:starshaped} is not overly restrictive as one can always
        formulate~\cref{eq:beckmann} on a strictly star shaped domain \(K\supset \Omega\) and
        approximate the original problem by choosing \(w\) to be large on \(K\setminus\Omega\).
\end{remark}
Throughout the rest of this section, for a given sequence \(0<\tau\to 0\), let
    \(0\leq\phi_\tau\in\test[\RR^d]\) be a sequence of mollifiers.
To avoid boundary effects, we will need to slightly extend the domain \(\Omega\).
More precisely, for every \(\tau>0\), we choose \(s>1\) such that \(\Omega_\tau := (1+s)\Omega
    \supset \Omega + \supp \phi_\tau\).
\Wloss we may assume \(\Omega_\tau \supset \Omega_\vartheta\) whenever \(\tau > \vartheta\).
Note that this is possible thanks to~\cref{assmpt:starshaped}.
Moreover, we denote \(\Tomega = \cup_\tau \Omega_\tau\).
Given a function (or measure) \(f\), we will denote by \(\tilde f\) the extension of \(f\) onto
    \(\Tomega\) by zero.
With \(\hat w\) we will denote a continuous extension of \(w\in \CC\) onto \(\Tomega\) which also
    satisfies \(\min_{\Tomega} \hat w  = \min_\Omega w\).

For \(\nu\in\MMperp\) and \(\nu_\epsilon\in\MMperp[\Emega]\) let now
    \(\H[\delta][{\nu_\epsilon}]:\M[\Emega] \to\RR\cup\{\pm\infty\}\) and \(\Hnull[\nu]:\M
    \to\RR\cup\{\pm\infty\}\) be defined by
    \[
        \H[\delta][\nu_\epsilon](q) =
        \begin{cases}
            \intO[\epsilon] \calG*(\dens q)\dleb\,, & q\ll \leb^d\,, \dens q \in \La[\Emega]*\,,
            \div
            \dens q = \nu_\epsilon \text{ in } \Wperp[\Emega]\,,
            \\
            \infty\,,                               & \text{else,}
        \end{cases}
    \]
    where \(\calG*(\blank, q)\) is extended onto \(\Emega\) by extending \(w\) with \(\hat w\),
    and
    \begin{equation*}
        \Hnull[\nu](q) =
        \begin{cases}
            \int_\Omega w \dabsq \,, & \mdiv q = \nu \text{ in } \MMperp\,, \\
            \infty\,,                & \text{else,}
        \end{cases}
    \end{equation*}
    respectively.
Note that
    \begin{equation*}
    \H[0][\nu_\epsilon](q) =
    \begin{cases}
    \int_{\Emega} \hat w\dabsq + \frac\epsilon\alpha \Lanorm[\Emega]{\dens q}^\alpha\,,&  q\ll
    \leb^d\,, \dens q \in \La[\Emega]*\,, \div \dens q = \nu_\epsilon \text{ in }
    \Wperp[\Emega]\,,\\
    \infty\,,&\text{else.
}
\end{cases}
\end{equation*}

Note that we can extend \(\H[\delta][\nu]\) and \(\Hnull[\nu]\) to be defined on measures on
    \(\Tomega\) by extending the argument onto \(\Tomega\) by zero as described above.
Strictly speaking, the approximating problems that we consider are given as problems on
    \(\Emega\),
    \ie
    \begin{equation*}
        \min_{q\in\M[\Emega]*} \H[0][\tilde\mu](q)
        \quad\text{and}\quad
        \min_{q\in\M[\Emega]*} \H[\delta][\tilde\mu](q)\,,
    \end{equation*}
    respectively.
For convenience, we will refer to these problems
    by~\cref{eq:reg_beckmann,eq:double_reg_beckmann}, too.

Before we present the first approximation result, we state two auxiliary results.
\begin{lemma}\label{thm:laplace_eps_continuous}
    Let~\cref{assmpt:general,assmpt:starshaped,assmpt:laplace} hold and
        let \(\tau >0\).
    Let \(\nu_\tau\in\Wperp[\Taumega]\).
    Then the elliptic equation
        \begin{equation*}
        \int_{\Taumega} \grad y_\tau \cdot \grad \psi \dleb = - \sca\psi {\nu_\tau} \qquad
        \forall\psi\in\Wempty[\Taumega] \,.
    \end{equation*}
    has a unique solution
    \(y_\tau\in \Wa<\emptyset>[\Taumega]\).
    Moreover, the solution operator
        \(\laplace_\tau\inv:\Wperp[\Taumega] \to  \Wa<\emptyset>[\Taumega]\) is uniformly bounded
        for
        \(\tau\to 0\).
\end{lemma}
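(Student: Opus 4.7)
The plan is to reduce the problem on the $\tau$-dependent domain $\Omega_\tau=\lambda_\tau\Omega$, where $\lambda_\tau := 1 + s_\tau$, to an analogous problem on the fixed domain $\Omega$ via the linear scaling $z\mapsto \lambda_\tau z$, and then to invoke~\cref{assmpt:laplace} just once, on $\Omega$. By construction, $\lambda_\tau$ is bounded above and bounded away from zero by constants independent of $\tau$, so the Jacobian of this scaling and its inverse are uniformly controlled.

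Existence and uniqueness of $y_\tau$ are almost immediate: $\Omega_\tau$ is a linear dilation of the bounded Lipschitz domain $\Omega$ and hence is itself a bounded Lipschitz domain in the sense of~\cite[Chapter 1.2]{Grisvard2011}. Thus~\cref{assmpt:laplace} applies on $\Omega_\tau$ directly, and repeating the adjoint argument from the proof of~\cref{thm:div:surjective} produces a unique solution $y_\tau\in\Wa<\emptyset>[\Taumega]$ together with a bounded inverse $\laplace_\tau^{-1}$ whose norm is a priori allowed to depend on $\tau$.

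To upgrade this to a $\tau$-uniform estimate, I would pull the equation back to $\Omega$. Setting $\hat y_\tau(z) := y_\tau(\lambda_\tau z)$ and $\hat\psi(z) := \psi(\lambda_\tau z)$ for $\psi\in\Wempty[\Taumega]$, the chain rule together with the change of variables formula turn the defining identity of $y_\tau$ into an elliptic equation on $\Omega$ of the shape
\begin{equation*}
\int_\Omega \grad \hat y_\tau \cdot \grad \hat \psi \dleb = -\lambda_\tau^{2-d}\,\dual{\nu_\tau}{\psi} \quad \forall\, \hat\psi \in \Wempty.
\end{equation*}
The zero-mean side condition is preserved under this pullback because the Jacobian is the constant $\lambda_\tau^d$. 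Defining $\hat\nu_\tau\in\Wperp$ through the right-hand side above, one application of~\cref{assmpt:laplace} on the fixed domain $\Omega$ gives $\|\hat y_\tau\|_{\Wa<\emptyset>}\leq C\,\|\hat\nu_\tau\|_{\Wperp}$ with $C$ independent of $\tau$, after which the inverse change of variables delivers the desired uniform bound on $\laplace_\tau^{-1}$.

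The main obstacle is the careful bookkeeping of norms required to make the final step honest: I need the pullback $\psi\mapsto\hat\psi$ to be a bijection $\Wempty[\Taumega]\to\Wempty$ whose norm and inverse norm are controlled by fixed powers of $\lambda_\tau$, so that $\|\hat\nu_\tau\|_{\Wperp}$ differs from $\|\nu_\tau\|_{\Wperp[\Taumega]}$ only by a uniformly bounded factor; the analogous comparison for $y_\tau$ is more direct. Since every scaling factor that appears is a bounded power of $\lambda_\tau$ or $\lambda_\tau^{-1}$, all constants remain uniform in $\tau$, yielding the claimed uniform boundedness of $\laplace_\tau^{-1}$.
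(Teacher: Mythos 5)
Your core idea --- rescaling $\Omega_\tau$ back to the fixed domain $\Omega$ by the linear dilation, invoking Assumption~\ref{assmpt:laplace} once on $\Omega$, and tracking the scaling factors of the pullback to recover a $\tau$-uniform operator bound --- is the same as the paper's. The technical route differs: you observe directly that the pulled-back problem is again the constant-coefficient Laplace equation with a scalar factor $(1+\tau)^{d-2}$ that you move to the right-hand side, whereas the paper first establishes a quantitative Groeger-type perturbation result (Theorem~\ref{thm:groeger_inverse}) for variable-coefficient operators of the form $\intO (D\grad y)\cdot\grad\phi\dleb$ and then applies it with the specific choice $D = \omega_\tau = (1+\tau)^{d-2}I$. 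For that scalar matrix one has $m = M$ and hence $k = 0$, so the theorem collapses to the trivial estimate $\|A^{-1}\| = (1+\tau)^{-(d-2)}\|\lapr^{-1}\|$ and the perturbation machinery buys nothing in the present case. Its purpose, as the remark following Assumption~\ref{assmpt:starshaped} explains, is to prepare the ground for nonlinear bi-Lipschitz deformations of the domain, where the pulled-back coefficient matrix genuinely varies in $x$; your more direct version is cleaner for the lemma as stated but would not extend to that setting. One small inaccuracy: in your first paragraph you assert that Assumption~\ref{assmpt:laplace} applies on $\Omega_\tau$ directly because a dilation of a bounded Lipschitz domain is again one. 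The assumption is, however, posited only for $\Omega$, and the paper's remark substantiates it for Lipschitz domains only in $d=2,3$. This is harmless since your change-of-variables argument in the following paragraph already delivers existence and uniqueness, but the first paragraph as written is not justified on its own.
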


A proof of~\cref{thm:laplace_eps_continuous} is given in~\cref{sec:appdx:laplace_eps_cont}.

\begin{lemma}\label{rem:extension_div}
    Let~\cref{assmpt:general,assmpt:laplace,assmpt:starshaped} hold and
        let \(q\in\M*\) and \(\mu\in\MMperp\).
    Let \(q_\epsilon\in\M[\Emega]*\) such that \(\tilde q_\epsilon \wsto \tilde q\) in
        \(\M[\Tomega]*\) and \(\mdiv q_\epsilon = \tilde \mu\) in \(\MMperp[\Emega]\).
    Then also \(\mdiv q = \mu\) in \(\MMperp\).
\end{lemma}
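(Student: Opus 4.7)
The plan is to verify the weak divergence identity directly by testing against an arbitrary $\phi\in\CCempty$ and passing to the limit using the weak-$*$ convergence $\tilde q_\epsilon\wsto \tilde q$ in $\M[\Tomega]*$. The crux is to turn a test function on $\Omega$ into a test function on $\Omega_\epsilon$ in a way compatible with the convergence on the larger domain $\Tomega$, and then to use that $\tilde\mu$ is concentrated on $\Omega$.

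More concretely, I would proceed as follows. Fix $\phi\in\CCempty$ and extend it to a function $\hat\phi\in\CC[1][\Tomega]$; this is possible because the interior of $\Omega$ is a bounded Lipschitz domain, so $\CC[1]$-extensions exist (e.g.\ via a Whitney or reflection construction), and the result can be restricted to $\Tomega$. For each $\epsilon$, set $c_\epsilon := \abs{\Omega_\epsilon}^{-1}\int_{\Omega_\epsilon}\hat\phi\dx$ so that $\hat\phi-c_\epsilon \in \CCempty[\Omega_\epsilon]$. By the hypothesis $\mdiv q_\epsilon = \tilde\mu$ in $\MMperp[\Emega]$ we then have
\begin{equation*}
    -\int_{\Omega_\epsilon} \grad\hat\phi\cdot dq_\epsilon
    = \int_{\Omega_\epsilon} (\hat\phi-c_\epsilon) \,d\tilde\mu
    = \int_{\Omega} \hat\phi\,d\mu - c_\epsilon\,\dual{\mu}{1}
    = \int_\Omega \phi\,d\mu,
\end{equation*}
where we used that $\tilde\mu$ is supported in $\Omega$, that $\hat\phi|_\Omega=\phi$, and that $\dual{\mu}{1}=\mu^+(\Omega)-\mu^-(\Omega)=0$.

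Next, I would rewrite the left-hand side using the zero-extensions: since $\tilde q_\epsilon$ vanishes outside $\Omega_\epsilon$, we have $\int_{\Omega_\epsilon}\grad\hat\phi\cdot dq_\epsilon = \int_{\Tomega}\grad\hat\phi\cdot d\tilde q_\epsilon$. Because $\grad\hat\phi\in\CC[\Tomega]^d$ and $\tilde q_\epsilon\wsto \tilde q$ in $\M[\Tomega]*$, passing to the limit gives
\begin{equation*}
    -\int_{\Tomega}\grad\hat\phi\cdot d\tilde q_\epsilon
    \longrightarrow -\int_{\Tomega}\grad\hat\phi\cdot d\tilde q
    = -\int_\Omega \grad\phi\cdot dq,
\end{equation*}
where in the last step we used that $\tilde q$ is the zero extension of $q$ and that $\hat\phi|_\Omega=\phi$. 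Combining this with the equality above yields $-\int_\Omega\grad\phi\cdot dq = \int_\Omega\phi\,d\mu$ for every $\phi\in\CCempty$, which is exactly $\mdiv q=\mu$ in $\MMperp$.

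The only genuine subtlety is the construction of $\hat\phi\in\CC[1][\Tomega]$: since $\Omega$ has Lipschitz interior by~\cref{assmpt:general}, a standard $\CC[1]$-extension suffices, and the adjustment by $c_\epsilon$ neutralizes the mean-value constraint exactly because $\tilde\mu$ lies in $\MMperp[\Emega]$. Beyond that, the argument is a direct limit passage enabled by the weak-$*$ convergence and the continuity of $\grad\hat\phi$ on $\Tomega$.
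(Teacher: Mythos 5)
Your proof is correct, and it takes a genuinely different route from the paper's. The paper avoids constructing an extension of the test function across the boundary: given \(\psi\in\CC[1]\) it introduces the rescaled function \(\psi_\eta := \psi(\,\cdot\,/(1+\eta))\), which is automatically \(\CC[1]\) on \(\Emega\) for \(\epsilon<\eta\) thanks to the star-shapedness in~\cref{assmpt:starshaped}; after extending only \(\grad\psi_\eta\) continuously to \(\Tomega\), it passes to the limit first in \(\epsilon\) (weak-\(*\)) and then in \(\eta\) (using \(\psi_\eta\to\psi\) in \(\CC[1]\)). Your argument instead fixes one \(\CC[1]\)-extension \(\hat\phi\) of \(\phi\) to \(\Tomega\), normalizes the mean on \(\Emega\) by \(c_\epsilon\), observes that the mean-shift is invisible to both \(\grad\) and \(\dual{\mu}{1}=0\), and then performs a single weak-\(*\) limit passage. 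What your approach buys is the elimination of the nested double limit; what it costs is an appeal to a \(\CC[1]\)-extension operator for compact sets with Lipschitz interior (Whitney-type), which is classical but less elementary than the paper's linear rescaling. Both are sound; the paper's proof more transparently exposes where~\cref{assmpt:starshaped} enters, while yours only uses it implicitly through the definition of the sets \(\Emega\).
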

\begin{proof}
    Let \(\psi\in\CC[1]\), \(\eta>0\) and \wloss assume \(\epsilon < \eta\).
    Then with \(\psi_\eta := \psi(\blank \cdot {(1+\eta)}^{-1})\) it holds \(\psi_\eta|_{\Emega}
        \in
        \CC[1][\Emega]\) and \(\psi_\eta \to \psi\) in \(\CC[1]\).
    Let now \(\xi_\eta\in\CC[][\Tomega]*\) be a continuous extension of \(\grad \psi_\eta\) onto
        \(\Tomega\).
    Then
        \begin{equation*}
            -\intO \grad \psi_\eta \dmu
            =
            \intTO \xi_\eta \cdot \bd \tilde q_\epsilon \xrightarrow[\epsilon\to 0]{}
            \intTO \xi_\eta \cdot \bd \tilde q
            = \intO \grad \psi_\eta \cdot \bd q
        \end{equation*}
        and passing to the limit \(\eta\to 0\) concludes the proof.
\end{proof}

We are now in the position to state our first approximation result, which covers convergence of
    the minimizers of~\cref{eq:reg_beckmann}.

\begin{theorem}\label{thm:gammaconv:eps}
    Let~\cref{assmpt:general,assmpt:laplace,assmpt:starshaped} hold.
    It holds \(\Gammalim_{\epsilon\to 0} \H[0][\tilde \mu] = \Hnull\)  \wrt \ws convergence in
        \(\M[\Tomega]*\).
\end{theorem}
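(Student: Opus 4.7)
My plan is to verify the two parts of $\Gamma$-convergence separately. For the liminf inequality, fix a sequence $\tilde q_\epsilon \wsto \tilde q$ in $\M[\Tomega]*$; I may assume the liminf is finite and pass to a subsequence along which each $q_\epsilon$ is feasible for $\H[0][\tilde\mu]$, so in particular $q_\epsilon \ll \leb^d$ with $\div \dens{q_\epsilon} = \tilde \mu$ in $\Wperp[\Emega]$. Since $\CC[1] \subset \W<1,\alpha'>$, this implies $\mdiv q_\epsilon = \tilde \mu$ in $\MMperp[\Emega]$, and \cref{rem:extension_div} then delivers $\mdiv q = \mu$ in $\MMperp$, so $q$ is admissible for $\Hnull[\mu]$. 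Dropping the nonnegative $\LL\alpha$-penalty and invoking the standard weak-$*$ lower semicontinuity of $q\mapsto \intTO \hat w \dabsq$ for continuous $\hat w\geq 0$, combined with $\intTO \hat w\,d|\tilde q|=\intO w\dabsq$ (since $\tilde q$ is supported in $\Omega$ and $\hat w\equiv w$ there), yields $\Hnull[\mu](q) \leq \liminf_\epsilon \H[0][\tilde\mu](q_\epsilon)$.

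For the recovery sequence, given feasible $q$ I first note that $\div \tilde q = \tilde \mu$ as distributions on $\Tomega$: testing against $\phi\in\test[\Tomega]$ and exploiting $\tilde q = 0$ outside $\Omega$ reduces to $\mdiv q = \mu$, while $\tilde\mu\equiv\mu$ on $\Omega$ and vanishes outside. Mollifying, $q_\tau := \phi_\tau * \tilde q$ and $\mu_\tau := \phi_\tau * \tilde \mu$ satisfy $\div q_\tau = \mu_\tau$ pointwise, with support of $q_\tau$ inside $\Emega$ for $\tau$ small. The divergence is off by $\mu_\tau - \tilde\mu$, which I correct via \cref{thm:laplace_eps_continuous}: pick $y_{\tau,\epsilon}\in\Wempty[\Emega]$ solving
\begin{equation*}
    \int_\Emega \grad y_{\tau,\epsilon}\cdot\grad\psi\dleb
    = -\dual{\mu_\tau - \tilde\mu}{\psi}
    \quad\forall\,\psi\in\Wempty[\Emega],
\end{equation*}
set $p_{\tau,\epsilon} := -\grad y_{\tau,\epsilon}$, and let $q_{\tau,\epsilon} := q_\tau + p_{\tau,\epsilon}$, which then satisfies $\div q_{\tau,\epsilon} = \tilde\mu$ in $\Wperp[\Emega]$ and is thus feasible. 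The uniform-in-$\epsilon$ bound from \cref{thm:laplace_eps_continuous} yields $\norm{p_{\tau,\epsilon}}_{\La[\Emega]} \leq C\,\norm{\mu_\tau - \tilde\mu}_{\W<-1,\alpha>[\Emega]}$, and the compact Sobolev embedding $\W<1,\alpha'>\embed\CC$ (available since $\alpha<d/(d-1)$ under \cref{assmpt:general}) upgrades the narrow convergence $\mu_\tau\to\tilde\mu$ to strong convergence in $\W<-1,\alpha>[\Emega]$, so $p_{\tau,\epsilon}\to 0$ in $\La[\Emega]*$ as $\tau\to 0$.

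The remaining convergences are standard mollification estimates: $\tilde q_\tau \wsto \tilde q$ in $\M[\Tomega]*$, and $\intTO \hat w\,d|q_\tau|\to\intO w\dabsq$ via $|q_\tau|\leq \phi_\tau * |\tilde q|$ together with continuity of $\hat w$. The $\LL\alpha$-penalty $\norm{q_\tau}_{\La[\Tomega]}^\alpha$ may blow up like $\tau^{-d(\alpha-1)}$ (by Young's inequality, since $\|\phi_\tau\|_{\La} = O(\tau^{-d/\alpha'})$), so I choose $\tau=\tau(\epsilon)\to 0$ slowly enough that $\epsilon\,\tau(\epsilon)^{-d(\alpha-1)}\to 0$, for instance $\tau(\epsilon)=\epsilon^\gamma$ with $0<\gamma<1/[d(\alpha-1)]$. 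Then $q_\epsilon := q_{\tau(\epsilon),\epsilon}$ is the desired recovery sequence. The crux of the argument is this balance: preserving feasibility forces the elliptic correction, whose smallness hinges on both the uniform-in-$\epsilon$ bound from \cref{thm:laplace_eps_continuous} and the compact embedding $\W<1,\alpha'>\embed\CC$; and the vanishing regularization weight $\epsilon$ must be tuned delicately against the unavoidable $\LL\alpha$-blowup of the mollified measure. Both ingredients critically use the upper bound on $\alpha$ from \cref{assmpt:general}.
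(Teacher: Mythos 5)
Your proposal follows essentially the same route as the paper's proof: weak-$*$ lower semicontinuity of the weighted total-variation functional handles the $\liminf$ inequality, and the recovery sequence is built by mollifying $\tilde q$, correcting the resulting divergence defect via the uniformly bounded elliptic solver of Lemma~\ref{thm:laplace_eps_continuous}, and coupling the mollification scale to $\epsilon$ so that the $\LL\alpha$-penalty of the mollified field vanishes. Your Young's-inequality bound $\|q_\tau\|_{\LL\alpha}\lesssim\tau^{-d/\alpha'}$ is in fact marginally sharper than the paper's $\LL\infty$-based estimate $\|q_\tau\|_{\LL\infty}\lesssim\tau^{-d}$, but both yield an admissible coupling $\tau(\epsilon)$, so this is cosmetic; likewise, using the compact embedding $\W<1,\alpha'>\embed\CC$ to pass from narrow to norm convergence of $\mu_\tau-\tilde\mu$ in the dual space is a legitimate alternative to the paper's appeal to Lemma~\ref{thm:conv:vector_measure}.

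The one step you state but do not justify is that the weak-$*$ limit $q$ is supported in $\Omega$. You write the limit as ``$\tilde q$'' and assert $\hat w\equiv w$ on its support, but this is not automatic from $\tilde q_\epsilon\wsto q$ in $\M[\Tomega]*$: it is precisely what Lemma~\ref{thm:conv:vector_measure:spt} provides, using that $\spt \tilde q_\epsilon\subset\Omega_\epsilon\downarrow\Omega$. The paper applies that lemma \emph{before} invoking Lemma~\ref{rem:extension_div}, whose hypothesis already presupposes that the limit is the extension by zero of a measure on $\Omega$; your argument applies Lemma~\ref{rem:extension_div} directly, tacitly assuming its hypothesis. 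This is a small omission (the fact is true and not hard), but it is a genuine logical step in the $\liminf$ direction that needs to be supplied.
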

\begin{proof}
    \begin{enumerate}
        \item\emph{\(\liminf\)-condition:}
        Let \(q_\epsilon\in\M[\Emega]*\) be such that \(\tilde q_\epsilon \wsto q\in\M[\Tomega]\)
            in \(\M[\Tomega]\).
        Due to the \ws convergence, \((\tilde q_\epsilon)\) is bounded in \(\M[\Tomega]\), so that
            \(\intTO \hat w \dabsq<\infty\).
        Resort now to a subsequence such that \(\H[0][\tilde \mu](q_\epsilon) < \infty\).
        Then by~\cref{thm:conv:vector_measure:spt}, \(\spt q \subset \Omega\).
        Together with~\cref{rem:extension_div}, \(q\) is feasible for~\cref{eq:beckmann} and the
            assertion then follows directly from \(\intTO \hat w \bd\abs\blank\) being \lsc \wrt
            \ws convergence in \(\M[\Tomega]\) and \(\norm{\blank}_{\La[\Tomega]*} \geq 0\).

        \item\emph{\(\limsup\)-condition:}
        Let \(q\in\M*\) be arbitrary.
        In the case \(\Hnull(q) =\infty\), the assertion holds trivially.
        Hence, assume \(\Hnull(q) < \infty\).

        Let now \(\phi_\epsilon\) be as above and \wloss assume
            \(\epsilon \norm{\phi_\epsilon}_{\L\infty[\RR^d]}^\alpha \to 0\)
            for \(\epsilon \to 0\).
        Set \(q_\epsilon := \phi_\epsilon * \tilde q\) and \(\mu_\epsilon := \phi_\epsilon *
            \tilde \mu\).
        Then \(q_\epsilon\in\La[\Emega]*\) and \(\I(\tilde q_\epsilon)\to\tilde q\) in
            \(\M[\Tomega]*\) by~\cref{thm:conv:vector_measure}.
        Define now \(e_\epsilon := \tilde \mu_\epsilon - \tilde \mu\).
        It is straightforward to see that \(\sca{e_\epsilon}{1}=0\), \ie \(e_\epsilon\in
            \Wperp[\Emega]\).
        Then by~\cref{thm:laplace_eps_continuous} there is \(y_\epsilon \in
            \Wa<\emptyset>[\Emega]\)
            solving
            \begin{equation}\label{eq:y_epsilon}
            \int_{\Emega} \grad y_\epsilon \cdot \grad \psi \dleb = - \int_{\Emega} \psi \bd
            e_\epsilon\qquad \forall\psi\in\Wempty[\Emega] \,.
        \end{equation}
        Moreover,~\cref{remark:embedding,thm:conv:vector_measure} yield \(e_\epsilon \to 0\) in
            \(\Wperp[\Emega]\) and hence, \(\tilde y_\epsilon \to 0\) in \(\Wempty[\Tomega]\)
            by~\cref{thm:laplace_eps_continuous}.
        Thus, \(\widetilde {\grad y_\epsilon} \to 0\) in \(\La[\Tomega]\)
            and by defining \(\check q_\epsilon\in\M[\Emega]\) as
            \begin{equation*}
                \check q_\epsilon := q_\epsilon + {\grad y_\epsilon},
            \end{equation*}
            we obtain \(\widetilde{\I(\check q_\epsilon)} \to \tilde q\) in \(\M[\Tomega]*\).
        For \(\psi\in\Wempty[\Emega]\),
            leveraging~\cref{thm:conv:vector_measure}~\labelcref{thm:conv:vector_measure:div} now
            yields
            \begin{align*}
                -\int_{\Emega} \grad \psi \cdot \bd \check q_\epsilon
                 & = -\int_{\Emega} \grad\psi \cdot q_\epsilon \dleb - \int_{\Emega} \grad \psi
                \cdot \grad
                y_\epsilon \dleb
                \\
                 & = \int_{\Emega} \psi \bd \mu_\epsilon + \intO \psi \dmu - \int_{\Emega} \psi
                \bd
                \mu_\epsilon
                = \int_{\Emega} \psi \bd \tilde\mu\,,
            \end{align*}
            so that \(\div \check q_\epsilon = \tilde \mu\) in \(\Wperp[\Emega]\).
        Thus, \(\check q_\epsilon\) is feasible
            for~\cref{eq:reg_beckmann}.
        Going on, we note that \(\intO w \abs{\check q_\epsilon} \dleb \to \intO w\dabsq\) due to
            \(\widetilde{\I(\check q_\epsilon)} \to \tilde q\in\M[\Tomega]\).
        Moreover,
            \begin{equation}\label{eq:convq_upper:1}
                \abs{q_\epsilon(x)} \leq \sup_{h\in\RR^d\,,\,\abs{h}\leq 1} \abs[\Big]{\intTO
                    \phi_\epsilon(x-y)\bd (\tilde q(y)\cdot h)} \leq \intTO
                \abs[\big]{\phi_\epsilon(x-y)} \bd
                \abs{\tilde q(y)}\,,
            \end{equation}
            which gives
            \begin{equation}\label{eq:convq_upper:2}
            \norm{q_\epsilon}_{\L\infty[\Emega]} \leq	  \norm{\phi_\epsilon}_{\L\infty[\RR^d]}
            \abs{q}(\Omega)\,.
        \end{equation}
        Hence,
            \[
                \of{\frac \epsilon\alpha}^{\frac 1\alpha} \norm{\check q_\epsilon}_{\La[\Emega]*}
                \leq \of{\frac \epsilon\alpha}^{\frac 1\alpha}
                \left(
                \abs{\Tomega} \norm{\phi_\epsilon}_{\L\infty[\RR^d]} (\abs{q}(\Omega))
                + \norm{\grad y_\epsilon}_{\La[\Emega]*}
                \right)
                \,,
            \]
            which, due to the assumption on \(\phi_\epsilon\), vanishes for \(\epsilon\to 0\).
        This yields
            the desired assertion and concludes the proof.
        \qedhere
    \end{enumerate}
\end{proof}

\begin{corollary}\label{thm:gammaconv:eps:corollary}
    In the setting of~\cref{thm:gammaconv:eps},
        let \(w\geq \wlower > 0\).
    Let \(\epsilon_n>0\) be a vanishing sequence and \((q_n)\subset\La[\Omega_{\epsilon_n}]*\) be
        the sequence of corresponding solutions of~\cref{eq:reg_beckmann}.
    Then \((q_n)\) admits a subsequence that converges to a solution of~\cref{eq:beckmann} \wrt
        \ws convergence in \(\M[\Tomega]\).
\end{corollary}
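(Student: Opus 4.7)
The plan is to combine the $\Gamma$-convergence result of \Cref{thm:gammaconv:eps} with an equi-coercivity argument, so as to apply the standard fact that cluster points of minimizers of a $\Gamma$-converging sequence are minimizers of the $\Gamma$-limit.

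First, I would establish that the minimal values of $\H[0][\tilde\mu]$ along the sequence stay bounded. Since \cref{eq:beckmann} admits a solution $q_*\in\M*$ (so that $\Hnull[\tilde\mu](q_*)<\infty$), the $\limsup$-condition in the proof of \Cref{thm:gammaconv:eps} produces a recovery sequence $\check q_{\epsilon_n}\in\La[\Omega_{\epsilon_n}]*$ which is feasible and satisfies $\H[0][\tilde\mu](\check q_{\epsilon_n})\to\Hnull[\tilde\mu](q_*)$. Since $q_n$ minimizes $\H[0][\tilde\mu]$, this yields
\begin{equation*}
    \limsup_{n\to\infty}\H[0][\tilde\mu](q_n)\leq \Hnull[\tilde\mu](q_*)<\infty.
\end{equation*}

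The second step is equi-coercivity. Using the assumption $w\geq \wlower>0$ (and extending $\hat w$ so that $\hat w\geq \wlower$ on $\Tomega$, which is compatible with the convention $\min_{\Tomega}\hat w=\min_\Omega w$), one obtains
\begin{equation*}
    \wlower\,\norm{\tilde q_n}_{\M[\Tomega]*}
    \leq \intTO \hat w\,\abs{\tilde q_n}\dleb
    \leq \H[0][\tilde\mu](q_n)\leq C,
\end{equation*}
so that $(\tilde q_n)$ is bounded in $\M[\Tomega]*$. Since $\CC[\Tomega]*$ is separable and $\M[\Tomega]*$ is its dual, Banach-Alaoglu produces a subsequence (not relabeled) and some $\bar q\in\M[\Tomega]*$ with $\tilde q_n\wsto \bar q$ in $\M[\Tomega]*$.

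Finally, I would invoke the $\liminf$-condition of \Cref{thm:gammaconv:eps} at $\bar q$:
\begin{equation*}
    \Hnull[\tilde\mu](\bar q)\leq \liminf_{n\to\infty}\H[0][\tilde\mu](q_n)
    \leq \Hnull[\tilde\mu](q_*)=\min_{q\in\M[\Tomega]*}\Hnull[\tilde\mu](q).
\end{equation*}
Hence $\bar q$ is a minimizer of $\Hnull[\tilde\mu]$; in particular $\mdiv \bar q=\mu$ in $\MMperp$ (so $\bar q$ is supported in $\Omega$ via the same argument invoked in the $\liminf$-step, relying on \Cref{rem:extension_div} and \Cref{thm:conv:vector_measure:spt}) and $\bar q$ solves \cref{eq:beckmann}.

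The only delicate point is the equi-coercivity step, i.e.\ the use of the positive lower bound on $w$: without it, a uniform bound on $\int w\,\abs{q_n}$ would only control the mass of $q_n$ in the set where $w$ is bounded away from zero, and boundedness in $\M[\Tomega]*$ could fail. The regularization term $\tfrac{\epsilon_n}{\alpha}\|q_n\|_{\La}^\alpha$ degenerates as $\epsilon_n\to 0$, so it cannot supply coercivity either; this is exactly why the hypothesis $w\geq \wlower>0$ is imposed.
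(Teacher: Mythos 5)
Your proposal is correct and follows essentially the same route as the paper: bound the optimal values of the regularized problems, use the lower bound $w\geq\wlower>0$ for equi-coercivity in $\M[\Tomega]*$, extract a weak-$*$ convergent subsequence by Banach--Alaoglu, and invoke the $\Gamma$-convergence of \cref{thm:gammaconv:eps}. The only cosmetic difference is how the uniform bound on $\H[0][\tilde\mu](q_n)$ is obtained: the paper compares $q_n$ directly against a single fixed feasible $q_0\in\La*$ with $\div q_0=\mu$ (furnished by \cref{thm:div:surjective}), whereas you invoke the recovery sequence from the $\limsup$-condition; both give the same bound, and the paper's choice is marginally more self-contained since it does not re-open the proof of \cref{thm:gammaconv:eps}.
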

\begin{proof}
    Let \(q_0 \in \La*\) be fixed such that \(\div q_0 = \mu\) in \(\Wperp\), which exists due
        to~\cref{thm:div:surjective}.
    Then \(q_n\) satisfies
        \begin{equation*}
        \intTO \hat w \abs{\tilde q_n} \dleb + \frac{\epsilon_n}{\alpha}
        \Lanorm[\Tomega]{\tilde q_n}^\alpha
        \leq \intTO \hat w \abs{\tilde q_0} \dleb + \frac{\epsilon_n}{\alpha}
        \Lanorm[\Tomega]{\tilde q_0}^\alpha \,.
    \end{equation*}
    Thus, due to \(\hat w \geq \wlower > 0\) and \(\frac{\epsilon_n}{\alpha}
        \Lanorm[\Tomega]{\tilde{q}_0}^\alpha\to 0\) for \(n\to\infty\), it holds
        \begin{equation*}
        \wlower \norm{\tilde q_n}_{\L1[\Tomega]} \leq \intTO \hat w \abs{\tilde q_0} \dleb <
        \infty\,.
    \end{equation*}
    Hence, \((\I(\tilde q_n))\) is bounded in \(\M[\Tomega]\) and by the Banach-Alaoglu theorem
        there exists a subsequence (denoted by the same symbol), which converges to \(q\in
        \M[\Tomega]*\)
        \wrt \ws convergence in \(\M[\Tomega]\).
    The assertion then follows directly from~\cref{thm:gammaconv:eps} and the properties of
        \(\Gamma\)-convergence.
\end{proof}

\subsection{Convergence for vanishing Huber regularization}

Going on, we turn to problem~\cref{eq:double_reg_beckmann}.
As a first step, we only consider the
    convergence for \(\delta\to 0\).
We start by proving an auxiliary result.

\begin{lemma}\label{thm:Hdelta:locally_uniform}
    Let~\cref{assmpt:general} hold.
    For \(\delta\to 0\), the functional \(q\mapsto \intO[\epsilon] \calG*(q)\dleb\) converges
        locally uniformly to \(q\mapsto \intO[\epsilon] \calG[\epsilon,0]*(q)\dleb\) on
        \(\La[\Emega]*\).
\end{lemma}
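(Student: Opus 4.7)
The plan is to reduce the claim to a quantitative pointwise estimate on \(\calG*(x,q) - \calG[\epsilon,0]*(x,q)\) and then integrate, using Hölder's inequality to control the resulting integral uniformly on \(\La[\Emega]*\)-bounded sets. First I would identify \(\calG[\epsilon,0]*\) explicitly: taking \(\delta = 0\) makes \(\calR \equiv 0\), so \(\calG = \calF\) and a short direct computation (by the same scheme as in the proof of~\cref{thm:calG:conj}) yields
\[
\calG[\epsilon,0]*(x,q) = w(x)\abs{q} + \frac{\epsilon}{\alpha}\abs{q}^\alpha.
\]

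Next, I would split into the two regimes appearing in~\cref{thm:calG:conj}. In the regime \(\abs{q} \leq \delta\), all four terms in \(\calG*(x,q) - \calG[\epsilon,0]*(x,q)\) can be crudely bounded using \(\abs{q} \leq \delta\) to give
\[
\abs[\big]{\calG*(x,q) - \calG[\epsilon,0]*(x,q)} \leq C\,\delta\bigl(1 + w(x)\bigr),
\]
with \(C = C(\epsilon,\alpha)\) independent of \(q\) (assuming \wloss \(\delta \leq 1\)). In the regime \(\abs{q} > \delta\), the \(w\)-terms partially cancel and one is left with
\[
\calG*(x,q) - \calG[\epsilon,0]*(x,q) = \frac{\epsilon}{\alpha}\bigl((\abs{q}-\delta)^\alpha - \abs{q}^\alpha\bigr) - \tfrac{3}{2}\delta w(x).
\]
The mean value theorem together with monotonicity of \(t \mapsto t^{\alpha-1}\) (since \(\alpha > 1\)) gives \(\abs{(\abs{q}-\delta)^\alpha - \abs{q}^\alpha} \leq \alpha\delta\abs{q}^{\alpha-1}\), yielding the combined pointwise estimate
\[
\abs[\big]{\calG*(x,q) - \calG[\epsilon,0]*(x,q)} \leq C\,\delta\bigl(1 + w(x) + \abs{q}^{\alpha-1}\bigr).
\]

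Finally I would integrate this bound over \(\Emega\) and apply Hölder's inequality with exponents \(\alpha\) and \(\alpha' = \alpha/(\alpha-1)\) to the \(\abs{q}^{\alpha-1}\) term:
\[
\int_{\Emega} \abs{q}^{\alpha-1} \dleb \leq \abs{\Emega}^{1/\alpha}\,\Lanorm[\Emega]{q}^{\alpha-1}.
\]
Since \(w\) is continuous on the compact set \(\Emega\) this yields
\[
\int_{\Emega} \abs[\big]{\calG*(q) - \calG[\epsilon,0]*(q)}\dleb
\leq C\,\delta\Bigl(1 + \Lanorm[\Emega]{q}^{\alpha-1}\Bigr),
\]
with \(C\) independent of \(q\). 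The right-hand side is uniform on \(\La[\Emega]*\)-bounded sets and tends to zero as \(\delta\to 0\), which is precisely the desired local uniform convergence.

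The proof is essentially a careful bookkeeping exercise, and I expect no single step to be a real obstacle; the only thing that takes a little care is the regime \(\abs{q} > \delta\), where one must recognise that the bound from the mean value theorem should be \(\alpha\delta\abs{q}^{\alpha-1}\) (and not \(\alpha\delta(\abs{q}-\delta)^{\alpha-1}\), which would be sharper but not integrable against a general \(\La\) function after raising to the correct power).
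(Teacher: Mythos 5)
Your proof is correct, and it follows the same top-level decomposition as the paper (split into the regimes \(\abs{q}\leq\delta\) and \(\abs{q}>\delta\), with identical treatment of the first regime), but the key step in the second regime is handled differently. The paper introduces the auxiliary function \(Q_\delta(t)=\max\{t,\delta\}\), extends the integral from \(\{\abs{q}>\delta\}\) to all of \(\Emega\), rewrites the result as a difference of \(\La\)-norms raised to the power \(\alpha\), and then invokes local Lipschitz continuity of \(t\mapsto t^\alpha\) together with the reverse triangle inequality. You instead apply the mean value theorem pointwise, obtaining \(\abs{(\abs{q}-\delta)^\alpha-\abs{q}^\alpha}\leq\alpha\delta\abs{q}^{\alpha-1}\), and then conclude by Hölder with exponents \((\alpha',\alpha)\). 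Your route is more elementary and arguably cleaner: it avoids the detour through the \(Q_\delta\) machinery and yields an explicit pointwise estimate \(\abs{\calG*(x,q)-\calG[\epsilon,0]*(x,q)}\leq C\delta(1+w(x)+\abs{q}^{\alpha-1})\), whereas the paper's argument works only at the level of integrals. Both give the same conclusion with essentially the same effort. One small inaccuracy in your closing remark: the reason for the bound \(\alpha\delta\abs{q}^{\alpha-1}\) rather than \(\alpha\delta(\abs{q}-\delta)^{\alpha-1}\) is not integrability but simply that the intermediate point \(\xi\in(\abs{q}-\delta,\abs{q})\) in the MVT must be estimated from above, and \(t\mapsto t^{\alpha-1}\) is increasing for \(\alpha>1\); the sharper value is a lower bound, not an upper bound, and hence unavailable. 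This does not affect the validity of the proof.
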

\begin{proof}
    Let \(K\subset \La[\Emega]*\) be a bounded set.
    We want to show
        \[
        \lim_{\delta\to 0}\sup_{q\in K} \abs[\Big]{ \intO[\epsilon] \calG*(q) -
            \calG[\epsilon,0]*(q) \dleb } = 0\,.
    \]
    Clearly,
    \begin{align*}
        \abs[\Big]{ \intO[\epsilon] \calG*(q) - \calG[\epsilon,0]*(q) \dleb }
         & \leq \abs[\Big]{\int_{\{\abs{q(x)}\leq \delta\}} \calG*(x,q(x))  - \hat w \abs{q(x)} -
            \frac\epsilon\alpha \abs{q(x)}^\alpha \dx}
        \\
         & \quad + \abs[\Big]{ \int_{\{\abs{q(x)}> \delta\}} \calG*(x,q(x))  - \hat w \abs{q(x)}
            -
            \frac\epsilon\alpha \abs{q(x)}^\alpha \dx }\,.
    \end{align*}
    We first consider the first term:
        \begin{align*}
            \abs[\Big]{\int_{\{\abs{q}\leq \delta\}} \calG*(x,q(x))  - \hat w \abs{q(x)} -
                \frac\epsilon\alpha \abs{q(x)}^\alpha \dx}
             & =\abs[\Big]{\int_{\{\abs{q}\leq \delta\}} \abs{q}^2 \frac{\hat w}{\delta} - \delta
                \hat w
                - \hat w \abs{q} - \frac\epsilon\alpha \abs{q}^\alpha \dleb}
            \\
             & \leq 3\delta \norm{\hat w}_{\L1[\Emega]} + \frac \epsilon\alpha \delta^\alpha
            \abs{\Emega} \xrightarrow[\delta\to 0]{} 0\,,
        \end{align*}
        independent of \(q\).
    For the second term it holds
        \begin{align*}
        \abs[\Big]{ \int_{\{\abs{q}> \delta\}} \calG*(x,q(x)) - \hat w \abs{q(x)} -
            \frac\epsilon\alpha {\abs{q(x)}}^\alpha \dx }
        & =\abs[\Big]{\int_{\{\abs{q}> \delta\}} \frac\epsilon\alpha
        \of[\big]{{(\abs{q} - \delta)}^\alpha	- {\abs{q}}^\alpha}
        - \frac 32\delta \hat w \dleb }
        \\
        & \leq \abs[\Big]{\int_{\{\abs{q}> \delta\}} \frac\epsilon\alpha
        \of[\big]{{(\abs{q} -  \delta)}^\alpha - {\abs{q}}^\alpha }\dleb} +
        \frac 32 \delta \norm{\hat w}_{\L1[\Emega]}\,.
    \end{align*}
    Denoting \(Q_\delta: \RR\to\RR\), \(Q_\delta(x) = \max\{x,\delta\}\), we see that
        \begin{align*}
        0 &> \int_{\{\abs{q}> \delta\}} {(\abs{q} - \delta)}^\alpha  - {\abs{q}}^\alpha\dleb
        = \int_{\{\abs{q}> \delta\}} {(Q_\delta(\abs{q}) - \delta)}^\alpha  -
        {Q_\delta(\abs{q})}^\alpha\dleb\\
        &> \intO[\epsilon] {(Q_\delta(\abs{q}) - \delta)}^\alpha	-
        {Q_\delta(\abs{q})}^\alpha\dleb\,.
    \end{align*}
    By noting that \(Q_\delta(\abs{q}) - \delta\) and \(Q_\delta(\abs{q})\) are non-negative, we
        thus obtain
        \begin{align*}
        \sup_{q\in K} \frac\epsilon\alpha \abs[\Big]{\int_{\{\abs{q}> \delta\}}  {(\abs{q} -
                \delta)}^\alpha - \abs{q}^\alpha\dleb}
        \leq & \sup_{q\in K} \frac\epsilon\alpha \abs[\big]{ \norm{Q_\delta(\abs{q}) -
                \delta}_{\La[\Emega]}^\alpha - \norm{Q_\delta(\abs{q})}^\alpha_{\La[\Emega]} }\,.
    \end{align*}
    Because \({(\blank)}^\alpha\) is locally Lipschitz on \(\RR\), there is a constant \(C_K\geq
        0\)
        such that
        \(\abs{\Lanorm[\Emega]{v}^\alpha - \Lanorm[\Emega]{w}^\alpha} \leq C_K
        \abs{\Lanorm[\Emega]{v}
            - \Lanorm[\Emega]{w}}\)
        for all \(v,w\in K\).
    Together with the reverse triangle inequality, this yields
        \begin{align*}
            \sup_{q\in K} \frac\epsilon\alpha \abs[\big]{ \norm{Q_\delta(\abs{q}) -
                    \delta}_{\La[\Emega]}^\alpha - \norm{Q_\delta(\abs{q})}^\alpha_{\La[\Emega]} }
            \leq & \sup_{q\in K} \frac\epsilon\alpha C_K \abs[\big]{ \norm{Q_\delta(\abs{q}) -
            \delta}_{\La[\Emega]} - \norm{Q_\delta(\abs{q})}_{\La[\Emega]} }                   \\
            \leq & \sup_{q\in K} \frac\epsilon\alpha C_K \norm{\delta}_{\La[\Emega]} =
            \frac\epsilon\alpha C_K \abs{\Emega} \delta \xrightarrow[\delta\to 0]{} 0
        \end{align*}
        and concludes the proof.
\end{proof}

Now we're in a position to prove the desired result on \(\Gamma\)-convergence.

\begin{theorem}\label{thm:gammaconv:delta}
    Let~\cref{assmpt:general,assmpt:laplace,assmpt:starshaped} hold.
    Then for \(\nu\in\MMperp[\Emega]\) it holds \(\Gammalim_{\delta\to 0}\H[\delta][\nu] =
        \H[0][\nu]\)  \wrt \ws convergence in \(\M[\Emega]*\).
\end{theorem}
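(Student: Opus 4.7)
The plan is to derive both parts of $\Gamma$-convergence directly from Lemma~\ref{thm:Hdelta:locally_uniform} together with weak-$\La$ compactness and lower semi-continuity. Once the local uniform convergence of $\calG*$ lets me replace $\H[\delta][\nu]$ by the limit functional up to an error vanishing on bounded subsets of $\La[\Emega]*$, standard convex analysis closes the gap.

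For the $\liminf$-condition, take $q_\delta \wsto q$ in $\M[\Emega]*$ and, restricting to the nontrivial case $\liminf_\delta \H[\delta][\nu](q_\delta) < \infty$, pass to a subsequence with $\H[\delta][\nu](q_\delta)\leq C<\infty$. Each $q_\delta$ then admits an $\La[\Emega]*$-density $\dens{q_\delta}$ with $\div \dens{q_\delta}=\nu$ in $\Wperp[\Emega]$. The explicit form of $\calG*$ from Lemma~\ref{thm:calG:conj}, split over the regions $\{\abs{\dens{q_\delta}}\leq\delta\}$ and $\{\abs{\dens{q_\delta}}>\delta\}$, produces the coercivity estimate
\begin{equation*}
\frac{\epsilon}{\alpha}\int_{\Emega}\bigl(\max\{\abs{\dens{q_\delta}}-\delta,0\}\bigr)^{\alpha}\dleb \;\leq\; C + C'\,\delta\,\norm{\hat w}_{\L1[\Emega]},
\end{equation*}
which together with the triangle inequality in $\La$ yields a uniform bound on $(\dens{q_\delta})$ in $\La[\Emega]*$. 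By reflexivity, a further subsequence converges weakly in $\La[\Emega]*$ to some $\rho$; as $\CC[\Emega]\embed\Lap[\Emega]$, this weak $\La$-convergence entails \ws convergence of $\I(\dens{q_\delta})$ in $\M[\Emega]*$ to $\I(\rho)$, forcing $q=\I(\rho)$ and $\dens q=\rho$. Weak continuity of $\div$ on $\La[\Emega]*$ preserves the constraint $\div\rho=\nu$. Lemma~\ref{thm:Hdelta:locally_uniform} applied to the bounded set $K=\{\dens{q_\delta}\}$ now lets me replace $\int_{\Emega}\calG*(\dens{q_\delta})\dleb$ by $\int_{\Emega}\calG[\epsilon,0]*(\dens{q_\delta})\dleb$ up to a vanishing error, and since the latter is convex and continuous on $\La[\Emega]*$ (hence weakly lower semi-continuous), I conclude $\liminf_\delta\H[\delta][\nu](q_\delta)\geq \H[0][\nu](q)$.

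For the $\limsup$-condition, assume $\H[0][\nu](q)<\infty$. Then $q\ll\leb^d$, $\dens q\in\La[\Emega]*$ and $\div\dens q=\nu$, so the constant recovery sequence $q_\delta\equiv q$ is admissible for every $\delta>0$, and Lemma~\ref{thm:Hdelta:locally_uniform} applied to the singleton $K=\{\dens q\}$ immediately gives $\H[\delta][\nu](q)\to\H[0][\nu](q)$. The main obstacle is the $\liminf$ step, where one has to bridge two topologies: the hypothesis gives only \ws convergence in $\M[\Emega]*$, while the available lower semi-continuity is only for weak $\La$-convergence. Extracting an $\La$-bound from the energy via the explicit formula for $\calG*$ and then matching the \ws $\M$-limit with the weak $\La$-limit through the embedding $\CC[\Emega]\embed\Lap[\Emega]$ is therefore the key technical step.
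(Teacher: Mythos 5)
Your proof is correct and uses essentially the same ingredients as the paper's argument: the limsup is handled by a constant recovery sequence and Lemma~\ref{thm:Hdelta:locally_uniform}; the liminf uses the $\La$-coercivity coming from the explicit form of $\calG*$, weak-$\La$ compactness and identification of the weak-$*$ limit in $\M[\Emega]*$ with the weak limit in $\La[\Emega]*$ via $\CC[\Emega]\embed\Lap[\Emega]$, stability of the divergence constraint under weak $\La$-convergence, and weak lower semicontinuity combined once more with Lemma~\ref{thm:Hdelta:locally_uniform}. The one genuine streamlining in your version is organizational: by splitting the liminf step on whether $\liminf_\delta\H[\delta][\nu](q_\delta)<\infty$ rather than on the structure of the cluster point $q$, the derived $\La$-bound automatically forces $q=\I(\rho)$ with $\rho\in\La[\Emega]*$ and $\div\rho=\nu$, so the paper's three separate subcases (finite value; no $\La$-density; density but wrong divergence) collapse into a single argument, and the paper's auxiliary restriction $\delta\leq 1$ in its pointwise lower bound on $\calG*$ becomes unnecessary.
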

\begin{proof}
    \begin{enumerate}
        \item\emph{\(\limsup\)-condition:}
        Let \(q\in\M[\Emega]*\) be arbitrary.
        As recovery sequence,
            we use the constant sequence, \ie \(q_\delta \equiv q\).
        In the case \(\H[0][\nu](q) = \infty\), the assertion holds trivially.

        Hence, we assume \(\H[0][\nu](q) <\infty\).
        In this case, \(\dens q - \delta\in \La[\Emega]*\) with \(\div \dens q = \nu\) in
            \(\Wperp[\Emega]\) and thus, \(\H[\delta][\nu](q)<\infty\).
        Then, by~\cref{thm:Hdelta:locally_uniform}, \(\H[\delta][\nu](q) \to \H[0][\nu](q)\) for
            \(\delta\to 0\).

        \item\emph{\(\liminf\)-condition:}
        Let \(q\in\M[\Emega]*\) be arbitrary and let \(\M[\Emega]*\ni q_\delta \wsto q\) in
            \(\M[\Emega]*\).
        Moreover, we have \(\int_{\Emega} \hat w \dabsq <\infty\) analogously to the proof
            of~\cref{thm:gammaconv:eps}

            First, assume \(\H[0][\nu](q)< \infty\) and \wloss resort to a subsequence of
            \(q_\delta\)
            (denoted by the same symbol) such that
            \(\lim_{\delta\to 0} \H[\delta][\nu](q_\delta) = \liminf_{\delta\to 0}
            \H[\delta][\nu](q_\delta)<\infty\).
        We may \wloss assume \(\delta\leq 1\) so that
            \begin{equation}\label{eq:gammacondelta:lower_est}
            \calG*(x,p)
            \geq \hat w\of{\abs{q} - \frac 32} + \frac\epsilon\alpha {\of{{(\abs{p} -
                            1)}_+}}^\alpha\,.
        \end{equation}
        Thanks to~\cref{thm:div:surjective} we may choose \(q_0\in\La[\Emega]*\) fixed with \(\div
            q_0
            = \nu\) in \(\Wperp[\Emega]\) and obtain
            \begin{equation}\label{eq:gammaconvdelta:lower}
                \int_{\Emega}\frac \epsilon\alpha {({(\abs{\dens{q_\delta}} -1)}_+)}^\alpha \leq
                \intO[\epsilon]\calG*(\dens{q_\delta}) \dleb
                \leq \intO[\epsilon]\calG*(q_0) \dleb
                < \infty
            \end{equation}
            and hence \(\dens {q_\delta}\) is bounded in \(\La[\Emega]*\), \ie there is some
            \(K\subset\La[\Emega]*\) bounded such that \((\dens{q_\delta})\subset K\).
        This also yields weak convergence of a subsequence of \(\dens{q_\delta}\) in
            \(\La[\Emega]*\)
            and together with \(q_\delta\wsto q\) in \(\M[\Emega]*\) we have
            \(\dens{q_\delta}\wto\dens q\) in
            \(\La[\Emega]*\).
        Hence,~\cref{thm:Hdelta:locally_uniform} yields
            \begin{align*}
                \liminf_{\delta\to 0} \H[\delta][\nu](q_\delta)
                 & \geq \liminf_{\delta\to 0} (\H[\delta][\nu](q_\delta) - \H[0][\nu](q_\delta)) +
                \liminf_{\delta\to 0} \H[0][\nu](q_\delta)
                \\
                 & \geq - \sup_{p\in K}\abs[\big]{\H[\delta][\nu](\I(p)) \H[0][\nu](\I(p))} +
                \liminf_{\delta\to 0} \H[0][\nu](q_\delta)
                \\
                 & = \liminf_{\delta\to 0} \H[0][\nu](q_\delta) \geq \H[0][\nu](q)\,,
            \end{align*}
            where the last inequality holds due to \(\H[0][\nu]\) being \lsc \wrt weak convergence
            in \(\La[\Emega]*\).

        Assume now that  \(\H[0][\nu](q) = \infty\) with either \(\frac{\dq}{\dleb}\not\in
            \La[\Emega]*\) or \(q\not\ll\leb^d\).
        For a contradiction, assume \(\liminf_{\delta\to
                0}\H[\delta][\nu](q_\delta) < \infty\).
        As seen in~\cref{eq:gammaconvdelta:lower},
            this implies boundedness of \(\dens {q_\delta}\) in
            \(\La[\Emega]*\) and as above, we obtain \(\dens{q_\delta}\wto\dens q\) in
            \(\La[\Emega]*\), which
            is the desired contradiction.

        Finally, we are left with the case \(\H[0][\nu](q) = \infty\) with \(q\ll\leb^d\) and
            \(\dens{q}\in \La[\Emega]*\) but \(\div \dens q \neq \nu\) in \(\Wperp[\Emega]\).
        For a contradiction, we assume
            \(\liminf_{\delta\to 0}\H[\delta][\nu](q_\delta) < \infty\)
            and pass to a subsequence (denoted by the same symbol) such that
            \(\H[\delta][\nu](q_\delta)\)
            converges.
        As above, it follows that \(\dens{q_\delta}\wto \dens{q}\) in \(\La[\Emega]*\) and
            therefore
            \begin{equation*}
            -\dual{\phi}{\tilde\nu} = \intO[\epsilon] \dens{q_\delta} \cdot \grad\phi\dleb \to
            \intO[\epsilon] \dens{q} \cdot \grad\phi\dleb
            \qquad\forall\,\phi\in \Wempty[\Emega]\,.
        \end{equation*}
        This implies
            \(\div \dens q = \nu\) in \(\Wperp[\Emega]\), thus yielding the desired contradiction
            and
            concluding the proof.
        \qedhere
    \end{enumerate}
\end{proof}

\begin{corollary}
    In the setting of~\cref{thm:gammaconv:delta},
        let \(\delta_n>0\) be a vanishing sequence and \((q_n)\subset\La[\Emega]*\) be the
        sequence of
        corresponding solutions of~\cref{eq:double_reg_beckmann}.
    Then \((q_n)\) admits a subsequence that converges to a solution of~\cref{eq:reg_beckmann}
        \wrt \ws convergence in \(\M[\Emega]\).
\end{corollary}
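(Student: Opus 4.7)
The plan is to combine \cref{thm:gammaconv:delta} with a compactness argument, invoking the standard consequence of $\Gamma$-convergence recalled in \cref{sec:approx}: every cluster point of a sequence of minimizers $q_n$ of $\H[\delta_n][\tilde\mu]$ is a minimizer of the $\Gamma$-limit $\H[0][\tilde\mu]$, hence a solution of \cref{eq:reg_beckmann}. It therefore suffices to extract a subsequence of $(\I(q_n))$ that converges \ws in $\M[\Emega]*$.

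To this end, I would first control the optimal values. Fix a reference $q_0\in\La[\Emega]*$ with $\div q_0 = \tilde\mu$ in $\Wperp[\Emega]$, which exists by \cref{thm:div:surjective} since $\tilde\mu\in\Wperp[\Emega]$. Optimality of $q_n$ for \cref{eq:double_reg_beckmann} combined with \cref{thm:Hdelta:locally_uniform} applied to the singleton $K=\{q_0\}$ yields
\[
\H[\delta_n][\tilde\mu](q_n) \leq \H[\delta_n][\tilde\mu](q_0) \xrightarrow[n\to\infty]{} \H[0][\tilde\mu](q_0)<\infty,
\]
so the sequence of optimal values is uniformly bounded.

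Next, I would reuse the lower estimate \cref{eq:gammacondelta:lower_est} established in the proof of \cref{thm:gammaconv:delta}, which is available after discarding finitely many indices to ensure $\delta_n\leq 1$. It gives that $\int_{\Emega}\bigl((\abs{q_n}-1)_+\bigr)^\alpha\dleb$ is uniformly bounded, and the pointwise inequality $\abs{q_n}\leq (\abs{q_n}-1)_+ + 1$ then provides a uniform $\La[\Emega]*$-bound on $(q_n)$. By reflexivity of $\La[\Emega]*$, a (non-relabeled) subsequence satisfies $q_n\wto q$ for some $q\in\La[\Emega]*$. Since $\Emega$ is bounded, any $\phi\in\CC[\Emega]$ lies in $\Lap[\Emega]$, so this weak convergence entails $\I(q_n)\wsto\I(q)$ in $\M[\Emega]*$, as required.

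The only non-routine ingredient is the $\La[\Emega]*$-bound on the minimizers, which is obtained by directly invoking an estimate already set up in the proof of \cref{thm:gammaconv:delta}; beyond that, the argument is a textbook consequence of $\Gamma$-convergence.
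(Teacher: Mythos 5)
Your proof is correct and follows essentially the same approach as the paper: obtain a uniform bound on the optimal values $\H[\delta_n][\tilde\mu](q_n)$ via a fixed feasible $q_0$, use the lower estimate~\cref{eq:gammacondelta:lower_est} to bootstrap this into an $\La[\Emega]*$-bound, extract a weakly convergent subsequence by reflexivity, pass to \ws convergence of the associated measures, and conclude by the standard \(\Gamma\)-convergence principle and~\cref{thm:gammaconv:delta}. The paper's proof is terser — it simply says ``analogously to the argument involving~\cref{eq:gammaconvdelta:lower}'' — but it is exactly the chain of reasoning you have spelled out, and your explicit invocation of~\cref{thm:Hdelta:locally_uniform} on the singleton $\{q_0\}$ to make the right-hand bound uniform in $n$ is a helpful clarification that the paper leaves implicit.
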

\begin{proof}
    Analogously to the argument involving~\cref{eq:gammaconvdelta:lower} in the proof
        of~\cref{thm:gammaconv:delta}, we obtain a subsequence of \(q_n\) (denoted by the same
        symbol) with
        \(q_n\wto q\in\La[\Emega]*\) in \(\La[\Emega]*\).
    This also implies \(\I(q_n)\wsto \I(q)\) in \(\M[\Emega]*\) and the assertion follows directly
        from~\cref{thm:gammaconv:delta} and the properties of \(\Gamma\)-convergence.
\end{proof}

\subsection{Simultaneous Convergence of \texorpdfstring{\(\epsilon\)}
    {\unichar{"03B5} % chktex 18
    } and \texorpdfstring{\(\delta\)}
    {\unichar{"03B4} % chktex 18
    }
}

Lastly, we aim to show \(\Gamma\)-convergence for \(\delta\to 0\) and \(\epsilon\to 0\)
    \emph{simultaneously}.

\begin{theorem}\label{thm:gammaconv:eps_delta}
    Let~\cref{assmpt:general,assmpt:laplace,assmpt:starshaped} hold and
        let \(h:\RR_+ \to \RR_+\) such that \(h(\delta)\to\infty\) for \(\delta\to 0\).
    Denote \(\tau := (\epsilon, \delta)\) and let \(\tau\to 0\) such that
        \begin{equation}\label{eq:tau:conv}
        \epsilon \cdot {h(\delta)}^\alpha\to 0.
    \end{equation}
    Then \(\Gammalim_{\tau\to 0}\H[\delta][\tilde \mu] = \Hnull\) \wrt \ws convergence in
        \(\M[\Tomega]*\).
\end{theorem}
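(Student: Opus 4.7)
The plan is to weave the two preceding $\Gamma$-convergence proofs (\cref{thm:gammaconv:eps,thm:gammaconv:delta}) into a single argument, with the coupling \cref{eq:tau:conv} providing exactly the control needed to let both $\epsilon$ and $\delta$ vanish at once. As a preparatory step I would read off from the explicit formula in \cref{thm:calG:conj}, by distinguishing the cases $\abs{p}\leq\delta$ and $\abs{p}>\delta$, the pointwise lower bound
\begin{equation*}
\calG*(x,p) \geq \hat w(x)\abs{p} - 2\delta\,\hat w(x) \qquad \forall\,x\in\Emega,\,p\in\RR^d,
\end{equation*}
which plays in the joint setting the role of \cref{eq:gammacondelta:lower_est}.

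For the $\liminf$-inequality I take $q_\tau\in\M[\Emega]*$ with $\tilde q_\tau\wsto\tilde q$ in $\M[\Tomega]*$ and \wloss assume $\liminf_{\tau\to 0}\H[\delta][\tilde\mu](q_\tau)<\infty$. Integrating the bound above yields uniform control of $\int_{\Emega}\hat w\abs{\dens{q_\tau}}\dleb$. The feasibility $\div\dens{q_\tau}=\tilde\mu$ in $\Wperp[\Emega]$ together with \cref{rem:extension_div} gives $\mdiv q=\mu$ in $\MMperp$, and the support argument from the $\liminf$-step of \cref{thm:gammaconv:eps} (via \cref{thm:conv:vector_measure:spt}) delivers $\spt q\subset\Omega$. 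Combining this with \ws-lower semi-continuity of $q\mapsto\intTO \hat w\dabsq$ and $\delta\to 0$ produces
\begin{equation*}
\Hnull(q) = \intO w\dabsq = \intTO \hat w\dabsq \leq \liminf_{\tau\to 0}\int_{\Emega}\hat w\abs{\dens{q_\tau}}\dleb \leq \liminf_{\tau\to 0}\H[\delta][\tilde\mu](q_\tau).
\end{equation*}

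For the $\limsup$-inequality I recycle the recovery sequence from the proof of \cref{thm:gammaconv:eps} but calibrate the mollifier to $\delta$. Since $h(\delta)\to\infty$, I select the mollifiers $\phi_\tau$ so that $\norm{\phi_\tau}_{\L\infty[\RR^d]}\leq h(\delta)$. Setting $q_\tau:=\phi_\tau*\tilde q$, $\mu_\tau:=\phi_\tau*\tilde\mu$ and $\check q_\tau := q_\tau + \grad y_\tau$, where $y_\tau\in\Wa<\emptyset>[\Emega]$ solves \cref{eq:y_epsilon} with right-hand side $\tilde\mu_\tau - \tilde\mu$, the arguments of \cref{thm:gammaconv:eps} deliver $\widetilde{\I(\check q_\tau)}\to\tilde q$ in $\M[\Tomega]*$, feasibility $\div\check q_\tau=\tilde\mu$ in $\Wperp[\Emega]$, $\norm{\grad y_\tau}_{\La[\Emega]*}\to 0$, and $\int_{\Emega}\hat w\abs{\dens{\check q_\tau}}\dleb\to \intO w\dabsq$. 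Splitting the remaining energy via the explicit form of $\calG*$, exactly as in the proof of \cref{thm:Hdelta:locally_uniform}, leaves only a summand of order $\delta$ and the regularization term $\tfrac\epsilon\alpha\norm{\dens{\check q_\tau}}_{\La[\Emega]*}^\alpha$. Invoking \cref{eq:convq_upper:2} with $\norm{\phi_\tau}_{\L\infty[\RR^d]}\leq h(\delta)$ bounds the latter by
\begin{equation*}
\tfrac\epsilon\alpha\bigl(\abs{\Tomega}^{1/\alpha} h(\delta)\,\abs{q}(\Omega) + \norm{\grad y_\tau}_{\La[\Emega]*}\bigr)^\alpha,
\end{equation*}
which vanishes by \cref{eq:tau:conv} together with $\norm{\grad y_\tau}_{\La[\Emega]*}\to 0$.

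The crux of the argument is precisely this calibration of the mollifier against \cref{eq:tau:conv}: $\phi_\tau$ must be concentrated enough to provide a recovery sequence with a vanishing divergence correction, yet its $\L\infty$-norm may not exceed $h(\delta)$, since otherwise the regularization penalty, scaling like $\epsilon\,h(\delta)^\alpha$, would blow up along the recovery sequence. All other ingredients are routine adaptations of the two preceding $\Gamma$-convergence arguments.
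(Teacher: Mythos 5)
Your proof is correct and follows essentially the same route as the paper's: the recovery sequence in the $\limsup$ step is identical (mollify, correct the divergence via $\grad y_\tau$, control the $\LL\alpha$-penalty by calibrating $\norm{\phi_\tau}_{\L\infty}$ against $h(\delta)$ so that \cref{eq:tau:conv} kills it), and the $\liminf$ step rests on the same two ingredients (feasibility of the limit via \cref{thm:conv:vector_measure:spt} and \cref{rem:extension_div}, plus $\ws$-lower semicontinuity of $\intTO\hat w\,\mathrm d\abs{\blank}$). The only cosmetic difference is in the $\liminf$ bookkeeping: where the paper splits $\intO[\epsilon]\calG^*$ by hand into the regimes $\abs{q}\leq\delta$ and $\abs{q}>\delta$ and sends each error term to zero separately, you condense the same computation into a single pointwise bound $\calG^*(x,p)\geq\hat w(x)\abs p-2\delta\hat w(x)$, which is the direct analogue of \cref{eq:gammacondelta:lower_est} and buys a slightly tidier chain of inequalities; the resulting $O(\delta)$ correction vanishes for the same reason, so the content is the same. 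One small presentation point: in your final $\liminf$ display the $2\delta\norm{\hat w}_{\L1}$ remainder is suppressed, so the middle inequality $\liminf_{\tau\to 0}\int_{\Emega}\hat w\abs{\dens{q_\tau}}\,\mathrm d\leb^d\leq\liminf_{\tau\to 0}\H[\delta][\tilde\mu](q_\tau)$ is not literally a termwise comparison; it is true because the $O(\delta)$ error vanishes, which you do note verbally, but it would be cleaner to display the correction or to pass explicitly to a subsequence realizing the $\liminf$ (as the paper does) before dropping it.
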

\begin{proof}
    In the following, we will abbreviate \(\Htau := \H[\delta][\tilde \mu]\).

    \begin{enumerate}
        \item\emph{\(\liminf\)-condition:}
        Let \(q\in\M[\Tomega]*\) and
            \((q_\tau)\subset\M[\Omega_\tau]*\) such that \(\tilde q_\tau\wsto q\) in
            \(\M[\Tomega]*\).
        Analogously to the \(\liminf\) case in the proof of~\cref{thm:gammaconv:eps}, we obtain
            that
            \(q\) is feasible for~\cref{eq:beckmann} with \(\intO w \dabsq < \infty\).

        Without renaming, we resort to a subsequence of \((q_\tau)\) such that \(\lim_{\tau\to 0}
            \Htau(q_\tau) = \liminf_{\tau\to 0}\Htau(q_\tau)<\infty\).
        Then we obtain
            \begin{align*}
                \liminf_{\tau\to 0} \int_{\big\{\abs[\big]{\dens{\tilde q_\tau}}>\delta\big\}}
                \hat w\bd\abs{\tilde q_\tau(x)}
                 & = \liminf_{\tau\to 0} \intTO \hat w\bd\abs{\tilde q_\tau(x)} -
                \int_{\big\{\abs[\big]{\dens{\tilde q_\tau}}\leq \delta\big\}}
                \hat w\bd\abs{\tilde q_\tau(x)}
                \\
                 & \geq \liminf_{\tau\to 0} \intTO \hat w\bd\abs{\tilde q_\tau(x)} - \delta
                \norm{\hat w}_{\L1[\Tomega]}
                = \intO w \bd\abs{q}
            \end{align*}
            thanks to \(\intTO \hat w \bd\abs\blank\) being \lsc \wrt \ws convergence in
            \(\M[\Tomega]\).
        Going on, we see that
            \begin{equation*}
                \int_{\big\{\abs[\big]{\dens{\tilde q_\tau}} > \delta\big\}} \frac \epsilon\alpha
                {(\abs{\dens{\tilde q_\tau}} -\delta)}^\alpha - \frac 32 \delta \hat w \dleb
                \geq - \frac 32 \delta \norm{\hat w}_{\L1[\Tomega]} \xrightarrow[\tau\to 0]{} 0\
            \end{equation*}
            as well as
            \begin{equation*}
            \int_{\big\{\abs[\big]{\dens{\tilde q_\tau}} \leq \delta\big\}}
            \abs{\dens{\tilde q_\tau}}^2 \frac {\hat w}{2\delta}
            - \delta \hat w \dleb\geq \delta \norm{\hat w}_{\L1[\Tomega]}
            \xrightarrow[\tau\to 0]{} 0\,.
        \end{equation*}
        In summary, this yields \(\liminf_{\tau\to 0} \Htau(q_\tau) \geq  \Hnull(q)\).

        \item\emph{\(\limsup\)-condition:}
        Let \(q\in\M*\) be arbitrary.
        In the case \(\Hnull(q)=\infty\) the assertion again holds trivially.

        Hence, let \(\Hnull(q)<\infty\).
        Let \((\phi_\tau)\) be as above and \wloss assume \(\norm{\phi_\tau}_{\L\infty[\RR^d]}\leq
            h(\delta)\).
        Denote
            \(q_\tau := \phi_\tau*q\in\La[\Omega_\tau]*\)
            and similarly for \(\mu_\tau\).
        By~\cref{thm:conv:vector_measure}, it holds
            \(\I(\tilde q_\tau)\to \tilde q\) in \(\M[\Tomega]*\) and \(\mdiv \I(q_\tau) =
            \mu_\tau\) in
            \(\MMperp[\Tomega]\).
        Moreover, we obtain \(\norm{\tilde q_\tau}_{\L\infty[\Tomega]} \leq C h(\delta)\) for some
            constant \(C>0\) similarly to~\cref{eq:convq_upper:1,eq:convq_upper:2}.
        Analogously to the proof of~\cref{thm:gammaconv:eps}, we set
            \(\check q_\tau := \tilde q_\tau + {\grad y_\tau}\),
            where \(y_\tau\in\Wempty[\Omega_\tau]\) solves
            the analogue to~\cref{eq:y_epsilon}.
        Then we have
            \(\widetilde{\I(\check q_\tau)}\to \tilde q\) in \(\M[\Tomega]*\),
            \(\div \check q_\tau = \tilde \mu\) in \(\Wperp[\Tomega]\)
            and \(\widetilde{\grad y_\tau} \to 0\) in \(\La[\Tomega]*\).
        Going on, it holds
            \begin{equation*}
                \int_{\{\abs{\check q_\tau}<\delta\}} \abs{\check q_\tau}^2\frac {\hat w}{2\delta}
                -\delta \hat w \dleb\leq \intTO \delta \hat w - \delta \hat w\dleb = 0
            \end{equation*}
            and
            \begin{equation}\label{eq:gammaconv:eps_delta:objective}
            \int_{\{\abs{\check q_\tau}\geq \delta\}}\calG*(\check q_\tau)\dleb \leq
            \frac\epsilon\alpha\of{\norm{\check q_\tau + \widetilde{\grad
                        y_\tau}}_{\La[\Tomega]*}}^\alpha +
            \intTO \abs{\check q_\tau}\hat w\dleb\,.
        \end{equation}
        Due to~\cref{eq:tau:conv},
            \begin{equation*}
            \of{\frac\epsilon\alpha}^{\frac 1\alpha} \norm{\tilde q_\tau}_{\La[\Tomega]*}
            \leq C \of{\frac\epsilon\alpha}^{\frac 1\alpha} h(\delta) \xrightarrow[\tau \to 0]{}
            0\,.
        \end{equation*}
        Together with the strong convergence of
            \(\I(\check q_\tau)\) for \(\tau\to 0\) and
            \(\widetilde{\grad y_\tau} \to 0\) in \(\La[\Tomega]*\),
            the right hand side of~\cref{eq:gammaconv:eps_delta:objective} converges to
            \(\Hnull(q)\),
            thus concluding the proof.
        \qedhere
    \end{enumerate}
\end{proof}

\begin{corollary}
    In the setting of~\cref{thm:gammaconv:eps_delta},
        let \(w\geq \wlower > 0\) and
        let \(\epsilon_n, \delta_n>0\) be a vanishing sequences such that~\cref{eq:tau:conv}
        holds.
    Let \((q_n)\subset\La[\Omega_{\epsilon_n}]*\) be the sequence of corresponding solutions
        of~\cref{eq:double_reg_beckmann}.
    Then \((q_n)\) admits a subsequence that converges to a solution of~\cref{eq:beckmann} \wrt
        \ws convergence in \(\M[\Emega]\).
\end{corollary}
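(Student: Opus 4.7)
The plan is to mirror the argument of \cref{thm:gammaconv:eps:corollary}: first derive a uniform $\L1$-bound on the sequence $(\I(\tilde q_n))\subset\M[\Tomega]*$ of extended solutions, then invoke Banach--Alaoglu to extract a weak-$*$ convergent subsequence, and finally identify the limit as a solution of \eqref{eq:beckmann} via the $\Gamma$-convergence of \cref{thm:gammaconv:eps_delta} together with the standard fact that cluster points of minimizers of a $\Gamma$-convergent sequence minimize the limit.

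The central step is the a priori bound. I would fix, via \cref{thm:div:surjective}, a single competitor $q_0\in\La[\Tomega]*$ with $\div q_0 = \tilde\mu$ in $\Wperp[\Tomega]$; its restriction is feasible for \eqref{eq:double_reg_beckmann} on every $\Omega_{\epsilon_n}$. By the explicit formula for $\calG*$ from \cref{thm:calG:conj}, together with $\epsilon_n,\delta_n\to 0$ and $\|q_0\|_{\La}$ fixed, one has $\H[\delta_n][\tilde\mu](q_0)\to\intTO \hat w |\tilde q_0|\dleb<\infty$. Assuming \wloss $\delta_n\leq 1$, the lower estimate \cref{eq:gammacondelta:lower_est} applied to $\tilde q_n$ yields
\[
\H[\delta_n][\tilde\mu](q_n) \geq \intTO \hat w \of{\abs{\tilde q_n} - \tfrac{3}{2}}\dleb \geq \wlower \norm{\tilde q_n}_{\L1[\Tomega]} - \tfrac{3}{2}\norm{\hat w}_{\L1[\Tomega]}\,.
\]
Since $q_n$ minimizes $\H[\delta_n][\tilde\mu]$, combining the two bounds gives a uniform $\L1[\Tomega]$-bound on $\tilde q_n$, crucially exploiting $\hat w\geq \wlower>0$.

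Banach--Alaoglu then provides a subsequence (denoted by the same symbol) with $\I(\tilde q_n)\wsto q$ in $\M[\Tomega]*$. By optimality of $q_n$ and the $\Gamma$-convergence $\Gammalim_{n\to\infty}\H[\delta_n][\tilde\mu] = \Hnull$ from \cref{thm:gammaconv:eps_delta}, the limit $q$ minimizes $\Hnull$ and hence solves \eqref{eq:beckmann}, concluding the proof.

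The principal obstacle is the uniform $\L1$-bound above: without $w\geq\wlower>0$, neither the sub-quadratic branch of $\calG*$ on $\{|q|\leq\delta\}$ nor the shifted $\alpha$-power branch on $\{|q|>\delta\}$ is coercive in $\L1[\Tomega]$, so the positivity assumption on $w$ is indispensable — exactly the same obstruction that makes \cref{thm:gammaconv:eps:corollary} require the analogous hypothesis.
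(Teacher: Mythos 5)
Your overall approach coincides with the paper's: combine the lower estimate~\cref{eq:gammacondelta:lower_est}, the positivity $w\geq\wlower>0$, and a fixed competitor to obtain a uniform $\L1[\Tomega]*$-bound on $(\tilde q_n)$; extract a \ws convergent subsequence via Banach--Alaoglu; and conclude from the $\Gamma$-convergence in~\cref{thm:gammaconv:eps_delta}.

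There is, however, a flaw in your choice of competitor. You pick $q_0\in\La[\Tomega]*$ with $\div q_0=\tilde\mu$ in $\Wperp[\Tomega]$ via~\cref{thm:div:surjective} and assert that its restriction to $\Omega_{\epsilon_n}$ is feasible for~\cref{eq:double_reg_beckmann}. This is false in general: the zero-flux boundary condition encoded in the weak divergence constraint does not survive restriction to a subdomain. The $q_0$ produced by~\cref{thm:div:surjective} is $-\grad y$ for $y$ solving a Poisson problem on $\Tomega$, so $q_0$ will generically carry nonzero normal flux across $\partial\Omega_{\epsilon_n}$, and testing the restricted field against $\psi\in\Wempty[\Omega_{\epsilon_n}]$ picks up a spurious boundary term so that $\div\bigl(q_0|_{\Omega_{\epsilon_n}}\bigr)\neq\tilde\mu$ in $\Wperp[\Omega_{\epsilon_n}]$. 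The paper sidesteps this by choosing $q_0\in\La*$ with $\div q_0=\mu$ in $\Wperp$ on the \emph{original} domain $\Omega$ and extending by zero: since $\tilde q_0$ vanishes on $\Omega_{\epsilon_n}\setminus\Omega$, it has no flux across $\partial\Omega_{\epsilon_n}$ and is therefore feasible on every $\Omega_{\epsilon_n}$. With that correction the remainder of your argument goes through and matches the paper's.
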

\begin{proof}
    Let \(q_0 \in \La*\) be fixed such that \(\div q_0 = \mu\) in \(\Wperp\), which exists due
        to~\cref{thm:div:surjective}.
    Then thanks to~\cref{eq:gammacondelta:lower_est} \(q_n\) satisfies
        \begin{align*}
            \intTO \hat w \of{\abs{\tilde q_n} - \frac 32} + \frac {\epsilon_n}\alpha
            {(({(\abs{\tilde q_n} -1)}_+))}^\alpha \dleb
            \leq \intTO \calG[\epsilon_n,\delta_n]*(\tilde q_n) \dleb
            \leq \intTO \calG[\epsilon_n,\delta_n]*(\tilde q) \dleb\,,
        \end{align*}
        where for the right hand side it holds
        \begin{equation*}
            \limsup_{n\to\infty }\intTO \calG[\epsilon_n,\delta_n]* (\tilde q_0) \dleb
            \leq \intTO \hat w \tilde q_0 \dleb < \infty
        \end{equation*}
        by similar argumentation as in \(\limsup\) case of the preceding proof.
    Thus, due to \(w\geq\wlower>0\), the sequence \((\tilde{q}_n)\) is bounded in
        \(\L1[\Tomega]*\)
        and the assertion then follows by argumentation analogous to the
        proof~\cref{thm:gammaconv:eps:corollary}.
\end{proof}

\section{Numerical Examples}\label{sec:numerics}

In this section, we report on the conducted numerical experiments.
We start by briefly explaining
    our discretization scheme.

\subsection{Discretization via Finite Elements}

To discretize the Newton equation in~\cref{eq:ssn:step:reg}, we employ standard piecewise linear
    and continuous finite elements.
The nodal basis associated with nodes \(x_1, \ldots, x_n\), \(n\in\N\), of a given triangular grid
    is denoted by
    \(\phi_1, \ldots, \phi_n\) such that the discretized ansatz and trial space is
    \(V_n = \operatorname{span}(\phi_1, \ldots, \phi_n)\).
Now, given an iterate \(y_h \in V_n\), the discrete counterpart of~\cref{eq:ssn:step:reg} reads
    \begin{equation}\label{eq:ssn:step:disc}
    \begin{aligned}
    \eta_h \in V_n, \quad \intO \eta_h \dleb &= 0,\\
    \intO \G'(-\grad y_h) \grad \eta_h \cdot \grad \psi\dleb
    &= \intO \G(-\grad y_h) \cdot \grad \psi\dleb + \intO \psi\dmu\\
    & \quad \forall\psi\in V_n:\,\intO\psi \dleb = 0\,.
\end{aligned}
\end{equation}
We introduce the matrices
    \begin{equation*}
        {A(y_h)}_{ij} := \intO \G'(-\grad y_h) \grad \phi_i \cdot \grad \phi_j\dleb,\quad
        M_{ij} := \intO \phi_i\phi_j\dleb
    \end{equation*}
    and the vectors
    \begin{equation}\label{eq:disc:db}
    {b(y_h)}_i := \intO \G(-\grad y_h) \cdot \grad \phi_i\dleb\,, \quad
    d_i := \intO \phi_i\dmu.
\end{equation}
Then~\cref{eq:ssn:step:disc} is equivalent to
    \begin{align}
        \one^\top M \eta    & = 0, \nonumber
        \\
        \eta^{\top} A(y_h)v & = {(b(y_h) + d)}^{\top}v\quad\forall v\in\RR^n:\,\one^{\top} Mv = 0
        \label{eq:meanzero}
    \end{align}
    where \(\eta\in \RR^n\) denotes the coefficient vector of \(\eta_h\) for the basis
    \(\phi_1,\ldots,\phi_n\) and \(\one = {[1, \ldots, 1]}^\top\).
If we introduce a scalar Lagrange multiplier \(r\) associated with~\cref{eq:meanzero}, then the
    system is equivalent to the
    saddle point problem
    \begin{equation*}
        \begin{pmatrix}
            A(y_h)      & M\one \\
            \one^\top M & 0
        \end{pmatrix}
        \begin{pmatrix}
            \eta \\ r
        \end{pmatrix}
        =
        \begin{pmatrix}
            b(y_h) + d \\
            0
        \end{pmatrix}
    \end{equation*}

    \begin{remark}
    If \(w\) is chosen piecewise constant on the triangular grid,
    the entries of \(A(y_h)\) and \(b(y_h)\) can be evaluated exactly, since \(\grad y_h\) is
    constant on each element.
The same holds for the objective \(\calJ(y_h)\) in the Armijo line search, as the second
    integral only involves linear combinations of piecewise linear functions on the elements,
    which can be integrated exactly.
\end{remark}

\subsection{\texorpdfstring{Influence of \(\epsilon\) and \(\delta\)}{{Influence of
                \unichar{"03B5} and \unichar{"03B4}}} % chktex 18
}

We first illustrate effect of the regularization parameters \(\epsilon\) and \(\delta\) on the
    solutions of~\cref{eq:double_reg_beckmann}.
We choose a simple Friedrich-Keller grid, \ie we divide the domain \(\Omega = {[0,1]}^2\) into a
    regular partition of equally sized squares and divide each square into two congruent
    triangles.

Both the marginals \(\mu^+\), \(\mu^-\) and the cost function \(w\) are non-negative functions
    which are constant on the squares, \ie they are constant across two adjacent triangles.
For the exponent \(\alpha\), we choose \(\alpha=2\).
Note that we required \(\alpha < \tfrac
    {d}{d-1}\) in~\cref{assmpt:general}, so that \(\alpha=2\) is actually a limit case.
We start the iteration with \(y \equiv 0\) and use the parameters \(\sigma_0=1\), \(\beta=\tfrac
    12\), \(\gamma=\tfrac 1{10}\) for the Armijo line search (\cref{alg:armijo}).
As stopping criterion, we use the relative error of the optimality condition~\cref{eq:ypde:reg}.
More precisely, for each \(i=1,\ldots,n\) we calculated \(d_i\) and \({b(y)}_i\) as
    in~\cref{eq:disc:db}
    and use the relative error
    \begin{equation}\label{eq:stopping_crit}
        \frac{\abs{d-b(y)}}{\abs{b(y)}}
    \end{equation}
    and stopped the iteration once this error dropped below \(10^{-8}\) or after 1000 iterations.

\begin{figure}[t]
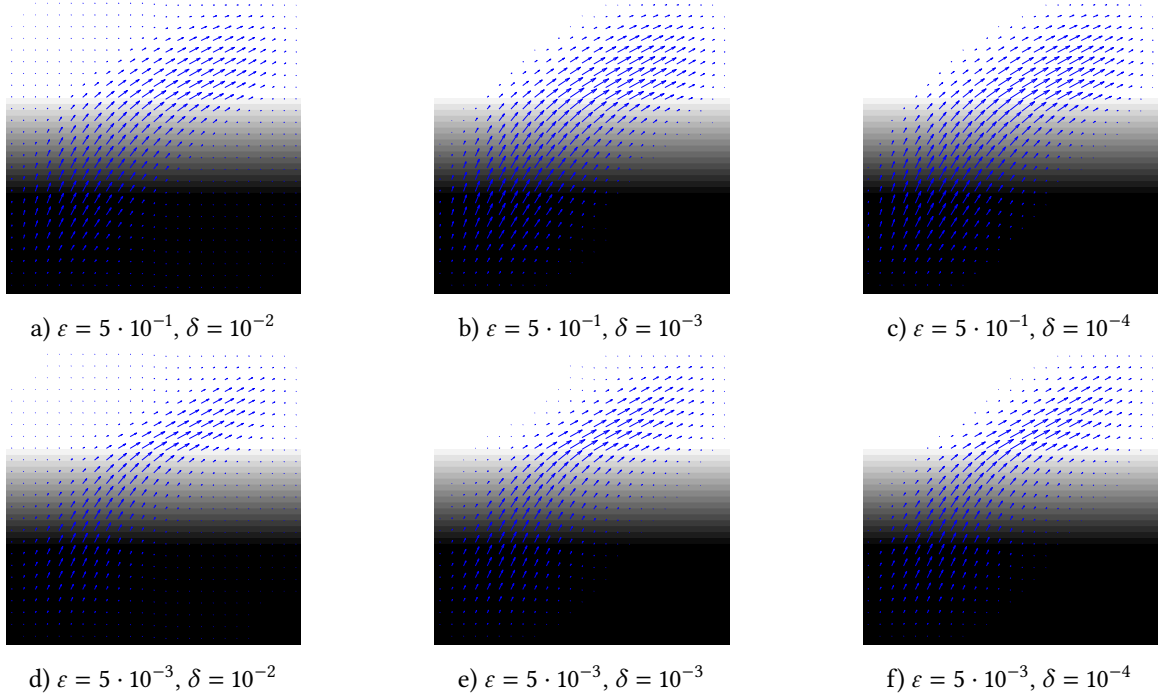

    \begin{center}
        \includeplot{toy/field_eps_5e-01_delta_1e-02.pdf}{\(\epsilon=5\cdot 10^{-1}\),
        \(\delta=10^{-2}\)}
        \includeplot{toy/field_eps_5e-01_delta_1e-03.pdf}{\(\epsilon=5\cdot 10^{-1}\),
        \(\delta=10^{-3}\)}
        \includeplot[0pt]{toy/field_eps_5e-01_delta_1e-04.pdf}{\(\epsilon=5\cdot 10^{-1}\),
        \(\delta=10^{-4}\)}\\
        \includeplot[\defaultfiguresep]{toy/field_eps_5e-03_delta_1e-02.pdf}{\(\epsilon=5\cdot
        10^{-3}\), \(\delta=10^{-2}\)}
        \includeplot{toy/field_eps_5e-03_delta_1e-03.pdf}{\(\epsilon=5\cdot 10^{-3}\),
        \(\delta=10^{-3}\)}
        \includeplot[0pt]{toy/field_eps_5e-03_delta_1e-04.pdf}{\(\epsilon=5\cdot 10^{-3}\),
        \(\delta=10^{-4}\)}
        \caption{Visualization of the flow field for an example with piecewise linear/constant
            cost
            function.
            Both \(\mu^+\) and \(\mu^-\) are Gaussians centered in the bottom left and top right
                quadrant, respectively.
        }\label{fig:param_study:toy}
    \end{center}
\end{figure}

\begin{figure}[t]
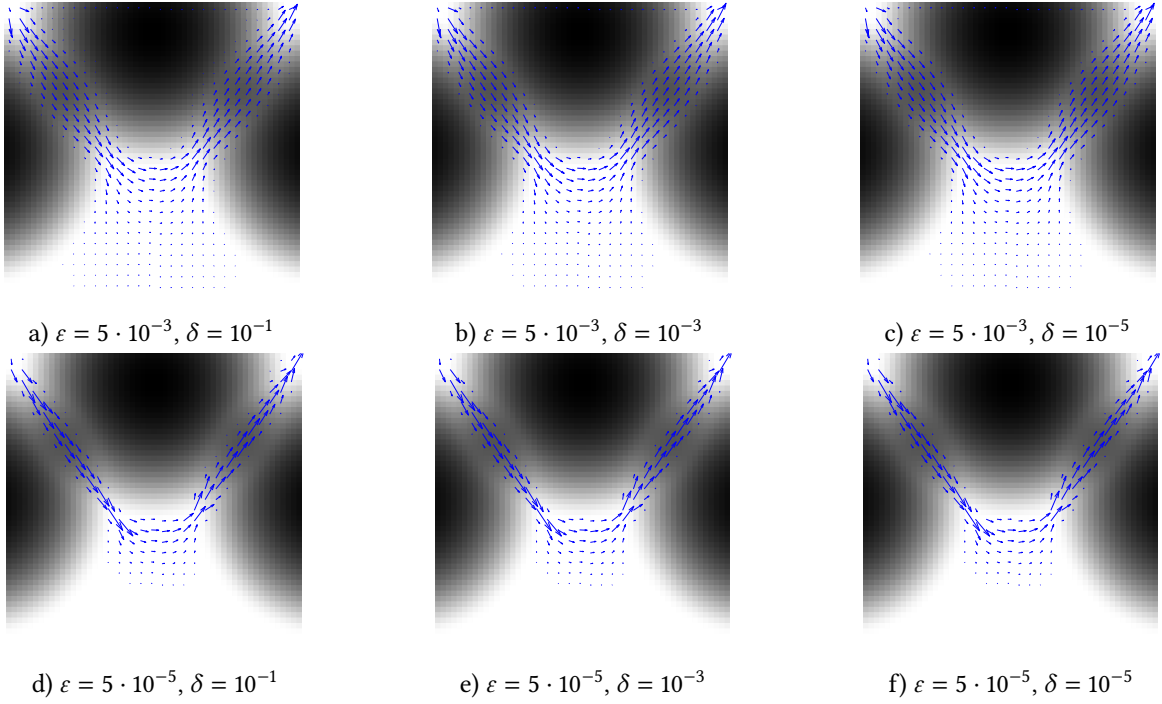

    \begin{center}
        \includeplot{valley/field_eps_5e-03_delta_1e-01.pdf}{\(\epsilon=5\cdot 10^{-3}\),
        \(\delta=10^{-1}\)}
        \includeplot{valley/field_eps_5e-03_delta_1e-03.pdf}{\(\epsilon=5\cdot 10^{-3}\),
        \(\delta=10^{-3}\)}
        \includeplot[0pt]{valley/field_eps_5e-03_delta_1e-05.pdf}{\(\epsilon=5\cdot 10^{-3}\),
        \(\delta=10^{-5}\)}\\
        \includeplot[\defaultfiguresep]{valley/field_eps_5e-05_delta_1e-01.pdf}{\(\epsilon=5\cdot
        10^{-5}\), \(\delta=10^{-1}\)}
        \includeplot{valley/field_eps_5e-05_delta_1e-03.pdf}{\(\epsilon=5\cdot 10^{-5}\),
        \(\delta=10^{-3}\)}
        \includeplot{valley/field_eps_5e-05_delta_1e-05.pdf}{\(\epsilon=5\cdot 10^{-5}\),
        \(\delta=10^{-5}\)}
        \caption{Visualization of the flow field for an example where the cost function is a
            mixture of three Gaussians.
            Both \(\mu^+\) and \(\mu^-\) are concentrated on a single square in
                the top left and top right corner, respectively.
        }\label{fig:param_study:valley}
    \end{center}
\end{figure}

\begin{figure}[t]
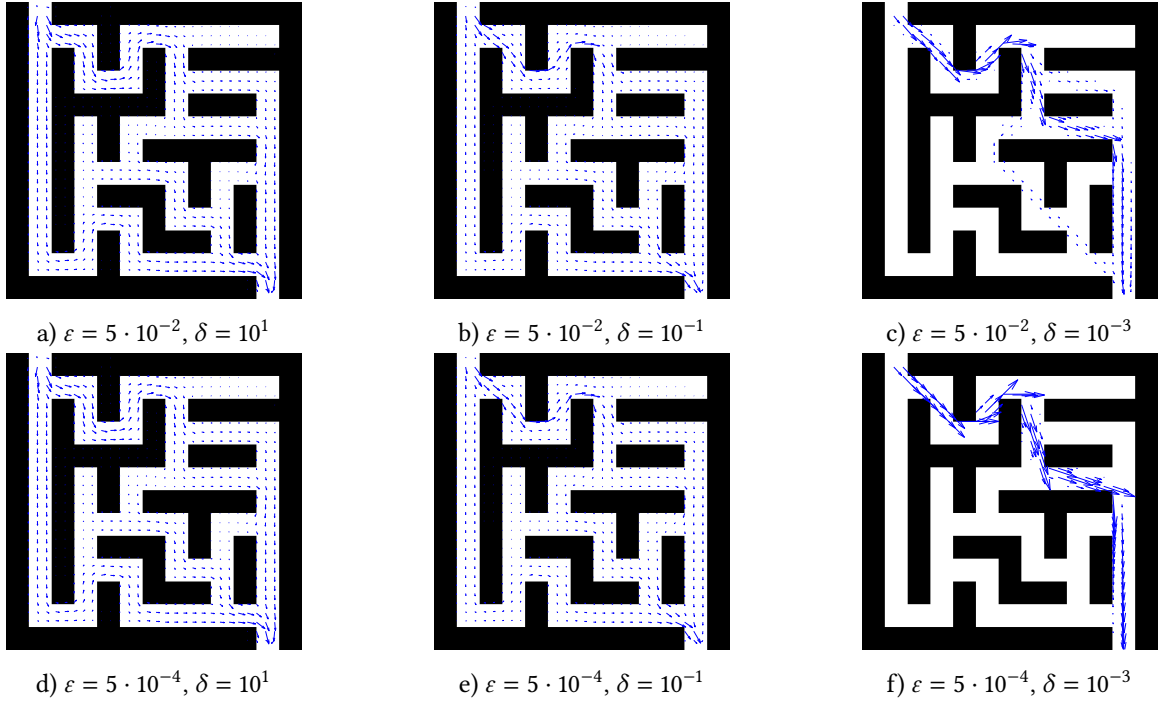

    \begin{center}
        \includeplot{maze/field_eps_5e-02_delta_1e+01.pdf}{\(\epsilon=5\cdot 10^{-2}\),
        \(\delta=10^{1}\)}
        \includeplot{maze/field_eps_5e-02_delta_1e-01.pdf}{\(\epsilon=5\cdot 10^{-2}\),
        \(\delta=10^{-1}\)}
        \includeplot[0pt]{maze/field_eps_5e-02_delta_1e-03.pdf}{\(\epsilon=5\cdot 10^{-2}\),
        \(\delta=10^{-3}\)}<fig:param_study:maze:c>\\
        \includeplot[\defaultfiguresep]{maze/field_eps_5e-04_delta_1e+01.pdf}{\(\epsilon=5\cdot
        10^{-4}\), \(\delta=10^{1}\)}
        \includeplot{maze/field_eps_5e-04_delta_1e-01.pdf}{\(\epsilon=5\cdot 10^{-4}\),
        \(\delta=10^{-1}\)}
        \includeplot[0pt]{maze/field_eps_5e-04_delta_1e-03.pdf}{\(\epsilon=5\cdot 10^{-4}\),
        \(\delta=10^{-3}\)}<fig:param_study:maze:f>
        \caption{Visualization of the flow field for an example where the cost function encodes a
            maze.
            Both \(\mu^+\) and \(\mu^-\) are concentrated on a single square in the top left and
                bottom
                right corner, respectively.
        }\label{fig:param_study:maze}
    \end{center}
\end{figure}

\Cref{fig:param_study:toy,fig:param_study:valley,fig:param_study:maze} show solutions
of~\cref{eq:double_reg_beckmann} for different choices of \(\mu^+\), \(\mu^-\) and \(w\).
The cost function \(w\) is encoded by the gray scale background, where darker shades denote higher
    costs.
In all cases, \(w\) is bounded away from zero.
The vector field \(q\) is encoded by the blue arrows.
For purposes of visualization, we display a downsampled version of \(q\), which was achieved by
    taking the average over the value of \(q\) across 4 squares (\ie 8 triangles) each.
Moreover, we only plot arrows who's Euclidian norm is larger than 1\% of the largest Euclidian
    norm of an entry in the averaged \(q\).
Note that the arrows are scaled for each subfigure independently.
The mesh consists of 5000 triangles for~\cref{fig:param_study:toy}, 6050 triangles
    for~\cref{fig:param_study:valley} and 8450 triangles for~\cref{fig:param_study:maze}.

We can observe that for \(\epsilon\to 0\), the solutions \(q\) become more singular, while for
    large \(\epsilon\) the regularization terms dominates the transportation cost so that the mass
    is
    transported more evenly through the domain.
As for the parameter \(\delta\) controlling the Huber regularization term, we can observe that
    while having only small influence on the regularity of \(q\), the overall objective value is
    reduced for large \(\delta\).
This can be seen best in~\cref{fig:param_study:maze}, where the maze has multiple solutions.
While for small \(\delta\) the shortest path is preferred, we see that other paths are used as
    well for larger \(\delta\).
This observation is in accordance to~\cref{thm:connection:primal} due to the terms \(-\delta w\)
    in \(\calG*\).

\subsection{Speed of Convergence}

\setlength{\defaultfiguresep}{0.1\textwidth}
\setlength{\figuresep}{0.1\textwidth}
\setlength{\figurewidth}{0.4\textwidth}
\begin{figure}[t]
    \begin{center}
        \includeplot{maze_errors_delta_1e-04.pdf}{Example from~\cref{fig:param_study:maze} for
            \(\epsilon=5\cdot 10^{-4}\)}[40mm 96mm 40mm 96mm]
        \includeplot[0pt]{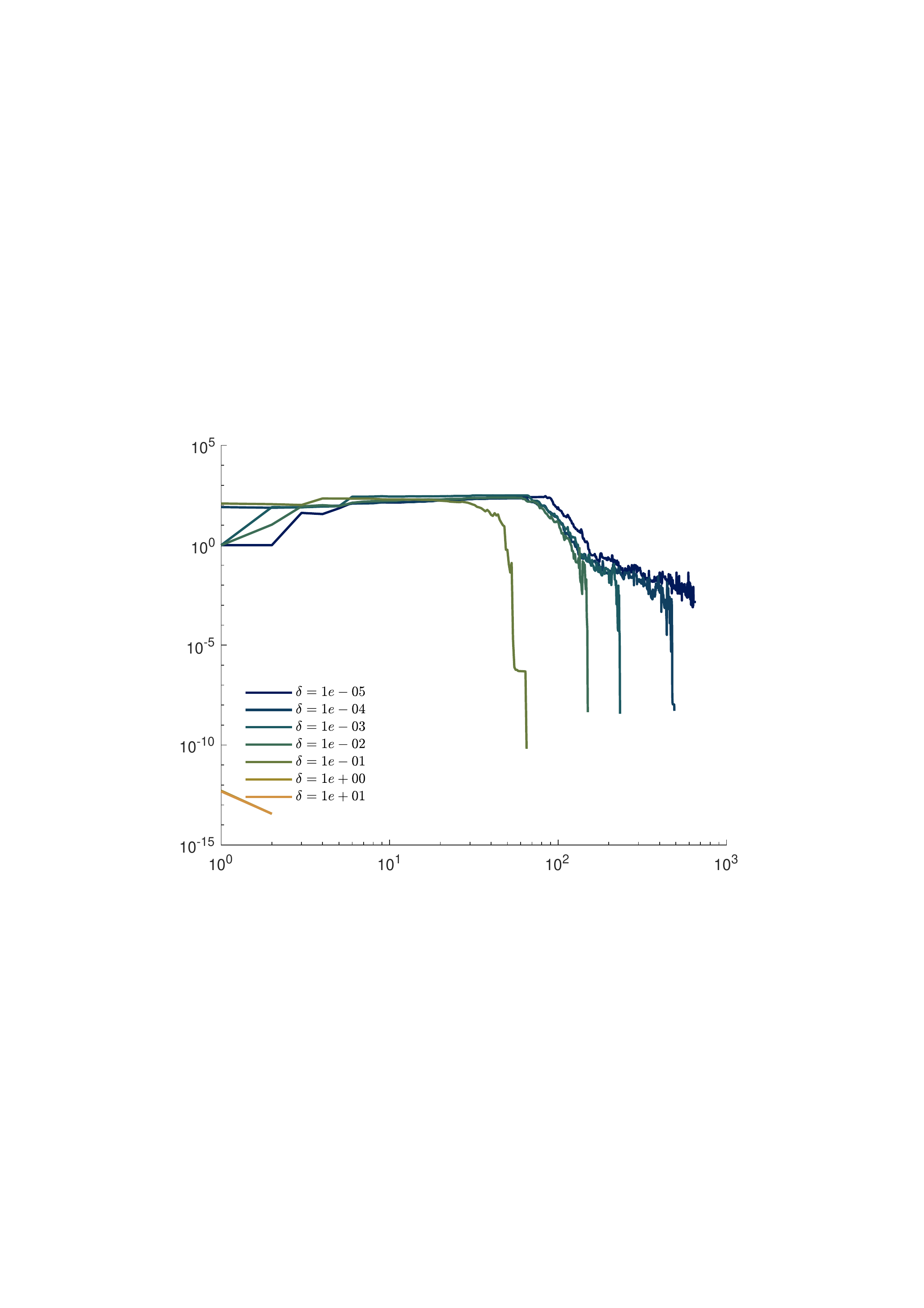}{Example from~\cref{fig:param_study:maze} for
        \(\delta=10^{-4}\)}[40mm 96mm 40mm 96mm]\\
        \includeplot[\defaultfiguresep]{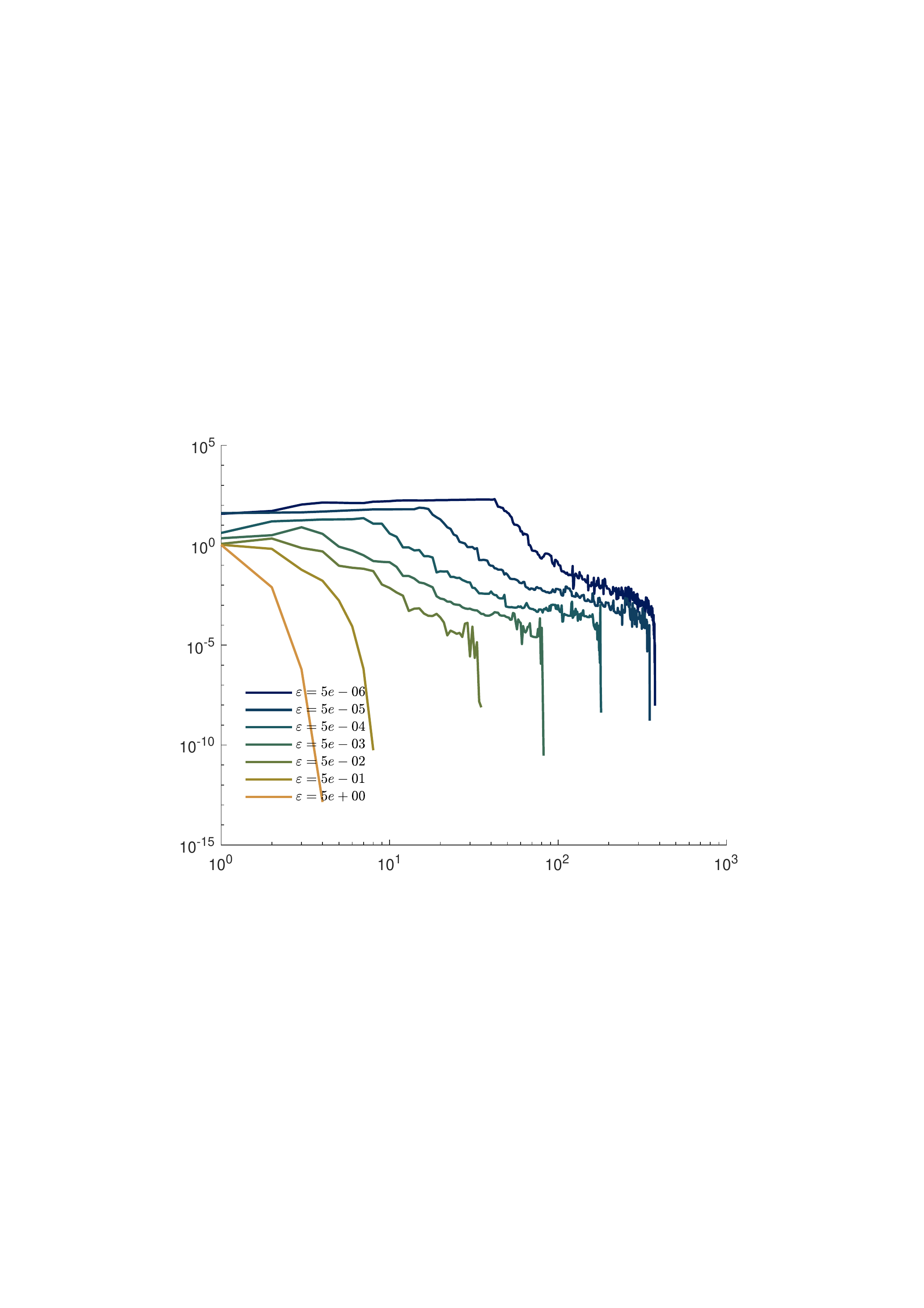}{Example
            from~\cref{fig:param_study:valley} for \(\epsilon=5\cdot 10^{-4}\)}[40mm 96mm 40mm
            96mm]
        \includeplot[0pt]{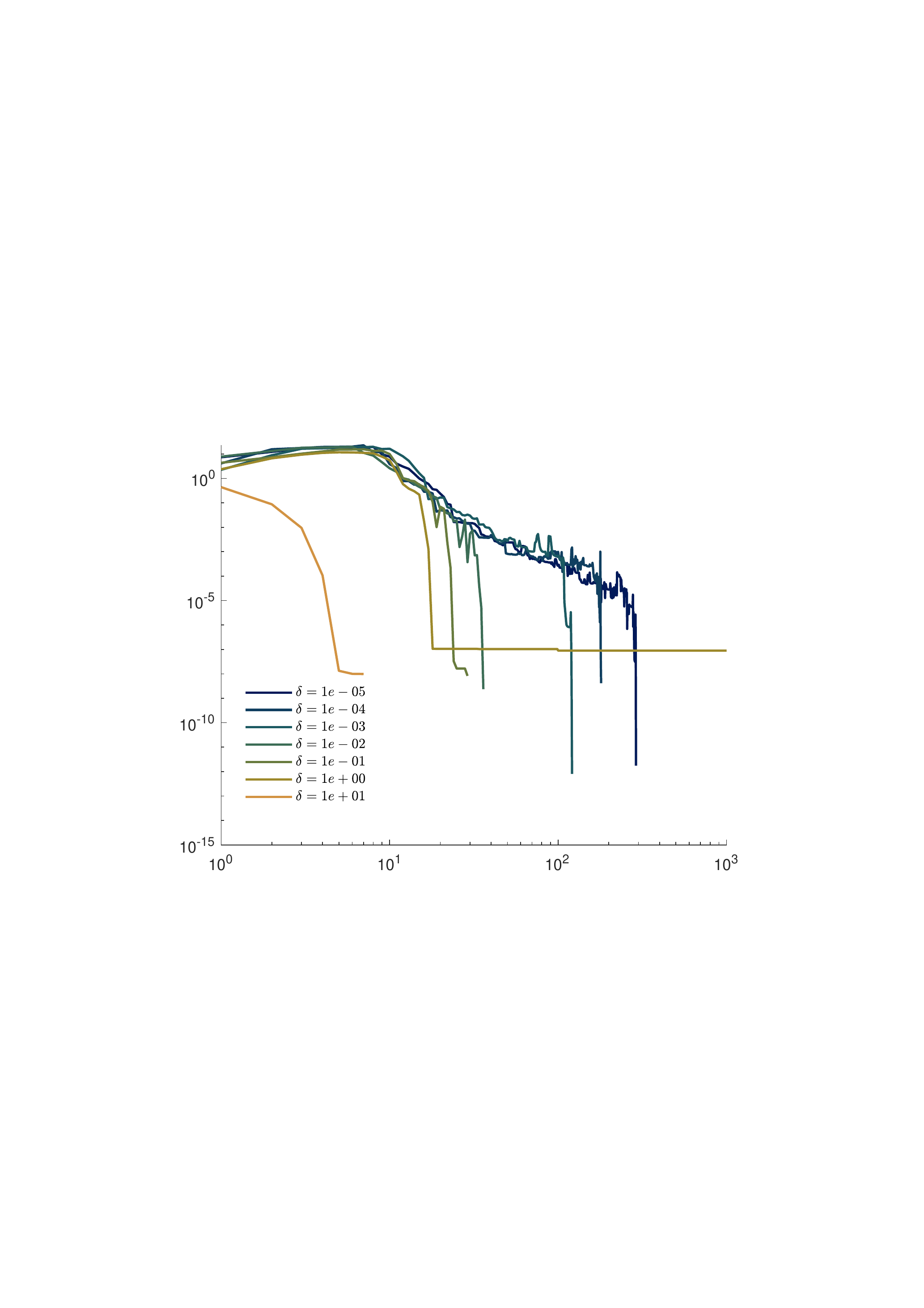}{Example from~\cref{fig:param_study:valley}
        for \(\delta=10^{-4}\)}[40mm 96mm 40mm 96mm]
        \caption{Relative error as in~\cref{eq:stopping_crit} for each iteration for selected
            instances of the examples shown
            in~\cref{fig:param_study:maze,fig:param_study:valley}.}\label{fig:convergence:error}
    \end{center}
\end{figure}

\Cref{fig:convergence:error} shows the observed relative errors in the optimality condition (as
described above) in dependence on the number of iterations for selected instances of the examples
from~\cref{fig:param_study:maze,fig:param_study:valley}.
We observe that larger regularization parameters, both for \(\delta\) and \(\epsilon\)
    significantly speed up convergence.
In fact, for some combinations of \(\delta\) and \(\epsilon\) the iteration failed to terminate
    for the given stopping criterion within the given maximum number of iterations.
These cases mostly
    correspond to very small regularization parameters.
However, for most test cases, we see quadratic convergence once we're close to the solution.

Note that larger values for \(\epsilon\) are interesting in the context of traffic
    congestion~\cite{Brasco_2010}.
The effect studied here can be observed by
    comparing~\cref{fig:param_study:maze:c,fig:param_study:maze:f}.
Here we can see, that the larger value of
    \(\epsilon\) promotes the shortest path, while the larger value promotes to spread the flow of
    mass across the different possible paths even if they are longer.

We also point out that our stopping criterion~\cref{eq:stopping_crit} is rather strict.
Among the
    literature reviewed in~\cref{sec:lit-review}, a similar criterion is used only
    in~\cite{benamou:2015,hatchi:2017}.
In these publications first order methods are employed, which
    naturally need a much higher number of iterations to achieve the same accuracy.
In~\cite{ryu:2017}
    a fixed-point residual of the Chambolle-Pock iteration is used as stopping criterion, which is
    not
    as easy to interpret.
For the ROF-Model in~\cite{jacobs:2019}, the authors are mainly interested
    in the objective value.
Hence, they only consider experiments where the objective value is known
    and use the error in the objective value as stopping criterion.
Finally,~\cite{barrett:2007,Facca_2020} use the relative change in the iterates \(\underline Q^j\)
    and \(\mu_h(t)\) (roughly corresponding to \(q\) and \(\abs{q}\) in our notation) as stopping
    criterion.

\section{Conclusion \& Outlook}\label{sec:conclusion}

In contrast to the original Beckmann problem, the \(\LL{\alpha}\)-regularized counterpart has
    unique solutions even for \(\mu^+, \mu^-\in\M\).
Moreover, this regularization naturally gives rise to a semi-smooth Newton scheme that can be used
    to solve the problem numerically.
For the iteration step to be well posed, we add a second regularization term of Huber type.
Convergence towards the original problem for vanishing regularization parameters can be proven, if
    the regularization parameters are coupled in an appropriate way.

This work can be extended both on the theoretic part and the numerical part.
On the theoretical
    part, a rigorous convergence theory for the proposed semi-smooth Newton
    iteration~\cref{alg:ssn}
    is still missing.
Regarding numerics, we have only worked with simple, fixed grids and similar
    to~\cite{barrett:2007} one could explore whether mesh adaption techniques are beneficial for
    the
    speed of convergence and accuracy of the solution.
Moreover one could employ path following schemes to try and improve the convergence speed.

\appendix
\pdfbookmark{Appendix}{appendix}
\section*{Appendix}
% We're a bit hacky here to make sure the theorems get numbered as A.x
% without having to introduce an acutal section
\stepcounter{section}

For the following result, \(\Tomega\) and \(\tilde\blank\) are defined as in~\cref{sec:approx}.

\begin{lemma}\label{thm:conv:vector_measure}
    Let \(0<\tau\to 0\) and let the notation of~\cref{sec:approx} hold.
    Let \(q\in\M*\) such that
        \(\mdiv q = \mu\) in \(\MMperp\) and denote \(q^\tau:=\phi_\tau*\tilde q\).
    Then
        \begin{enumerate}
        \item \(q^\tau \in \La[\Tomega]*\)
        \item\label{thm:conv:vector_measure:div} \(\div q^\tau = \phi_\tau*\tilde \mu\) in
        \(\Wperp[\Tomega]\)
        \item \(q^\tau \to \tilde q\) strongly in \(\M[\Tomega]*\).
    \item \(|q^\tau| \to \abs{\tilde q}\) strongly in \(\M[\Tomega]\) and hence also \wrt \ws
    convergence in \(\M[\Tomega]\).
    \end{enumerate}
\end{lemma}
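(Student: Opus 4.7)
The overall plan is to dispose of items (1) and (2) by direct computation and to address the strong-convergence statements (3) and (4) by first obtaining weak-\(*\) convergence in \(\M[\Tomega]*\) (resp.\ \(\M[\Tomega]\)) and then upgrading to strong convergence via the compact embedding \(\M[\Tomega] \embed {(\W<1,\alpha'>[\Tomega])}^*\); this compactness is guaranteed by \(\alpha < d/(d-1)\) via Rellich--Kondrachov (cf.\ \cref{remark:embedding}~\labelcref{remark:embedding:one}), which makes \(\W<1,\alpha'>[\Tomega] \embed \CC[][\Tomega]\) compact, and by Schauder's theorem its adjoint inclusion is compact as well, so that any bounded weak-\(*\) convergent sequence in \(\M[\Tomega]*\) converges strongly in the Sobolev-dual norm. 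For (1), Minkowski's integral inequality gives \(\Lanorm[\Tomega]{\phi_\tau * \tilde q} \leq \norm{\phi_\tau}_{\La[\RR^d]}\abs{\tilde q}(\Omega) < \infty\). For (2), Fubini combined with \(\mdiv q = \mu\) applied to the \(\CC[1]\) test function \(\phi_\tau * \phi\) yields, for every \(\phi \in \Wempty[\Tomega]\),
\[
    -\int_{\Tomega} q^\tau \cdot \grad \phi \dleb
    = \int_{\Omega} \grad(\phi_\tau * \phi) \cdot \bd \tilde q
    = -\int_{\Omega} (\phi_\tau * \phi) \bd \tilde \mu
    = -\int_{\Tomega} \phi \cdot (\phi_\tau * \tilde \mu) \dleb,
\]
and a final Fubini application shows \(\int (\phi_\tau * \tilde \mu) \dleb = \tilde \mu(\Tomega) = 0\), so that \(\phi_\tau * \tilde \mu \in \Wperp[\Tomega]\).

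For (3), weak-\(*\) convergence \(\I(q^\tau) \wsto \tilde q\) in \(\M[\Tomega]*\) follows from the standard calculation
\(
    \int \psi \cdot q^\tau \dleb = \int (\phi_\tau * \psi) \cdot \bd \tilde q \to \int \psi \cdot \bd \tilde q
\)
for all \(\psi \in \CC[][\Tomega]^d\), using that \(\phi_\tau * \psi \to \psi\) uniformly on compact sets. The sequence \((q^\tau)\) is uniformly bounded in \(\M[\Tomega]*\) because \(\abs{q^\tau}(\Tomega) \leq \norm{\phi_\tau}_{\L1[\RR^d]} \abs{\tilde q}(\Omega) = \abs{\tilde q}(\Omega)\); applied component-wise, the compactness recalled above upgrades the weak-\(*\) convergence to strong convergence in the norm of the Sobolev dual, which is the sense in which the authors state strong convergence in \(\M[\Tomega]*\) via the identification \(\M \embed {(\W<1,\alpha'>)}^*\).

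For (4), the key step is to establish weak-\(*\) convergence \(\abs{q^\tau} \wsto \abs{\tilde q}\) in \(\M[\Tomega]\); strong convergence then follows by the same compactness upgrade as in (3). For \(\phi \in \CC[][\Tomega]\) with \(\phi \geq 0\), the pointwise triangle inequality \(\abs{q^\tau(x)} \leq (\phi_\tau * \abs{\tilde q})(x)\) together with Fubini gives
\[
    \limsup_{\tau} \int \phi \bd \abs{q^\tau}
    \leq \lim_{\tau} \int (\phi_\tau * \phi) \bd \abs{\tilde q}
    = \int \phi \bd \abs{\tilde q},
\]
while the matching \(\liminf\) bound follows from weak-\(*\) lower semicontinuity of \(\mu \mapsto \int \phi \bd \abs{\mu}\), itself a consequence of the dual representation \(\int \phi \bd \abs{\mu} = \sup\bigl\{\int \psi \cdot \bd \mu : \psi \in \CC[][\Tomega]^d,\ \abs{\psi} \leq \phi\bigr\}\) as a supremum of weak-\(*\)-continuous linear functionals, combined with item~(3). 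Splitting a general \(\phi\) into positive and negative parts extends the claim. The main obstacle is precisely this weak-\(*\) convergence of absolute values, since weak-\(*\) convergence of vector measures does not in general transfer to their total variations; once the convolution upper bound and the Reshetnyak-style lower semicontinuity have been assembled, the compactness upgrade from weak-\(*\) to strong convergence is identical to that in (3).
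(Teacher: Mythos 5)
For items (1) and (2) you follow essentially the paper's own route: where you invoke Minkowski/Young, the paper cites a result of Gaudry, and its proof of (2) is exactly your computation --- Fubini to move the mollifier onto the test function, the gradient commuted with the mollification, and then the constraint \(\mdiv q = \mu\) tested with the mollified function. Two small slips there: Fubini produces the reflected kernel \(\int \phi_\tau(y-x)\phi(y)\dy\) (the paper's \(\psi_\tau\)), which coincides with \(\phi_\tau*\phi\) only for even mollifiers, and your first equality is off by a sign (it should read \(-\intO \grad(\phi_\tau*\phi)\cdot\bd\tilde q\)), so your chain as written would end with \(\div q^\tau = -\phi_\tau*\tilde\mu\). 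Both are bookkeeping issues, not gaps.

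For (3) and (4) you take a genuinely different route. The paper argues directly: for a Borel set \(A\subset\Tomega\) it applies Fubini and dominated convergence to \(x\mapsto \int_A \phi_\tau(y-x)\dy\), and then obtains (4) from the reverse triangle inequality for total variations; no weak-\(*\)/compactness machinery appears. You instead prove \ws convergence of \(\I(q^\tau)\) and of \(\abs{q^\tau}\) (convolution upper bound plus the Reshetnyak-type duality argument) and upgrade to norm convergence in \({(\W<1,\alpha'>[\Tomega])}^*\) via compactness of \(\W<1,\alpha'>\embed \CC\). Be aware that this establishes a weaker assertion than the literal one: ``strongly in \(\M[\Tomega]*\)'' ordinarily means in total variation, which is what the paper's argument is aiming at. On the other hand, total-variation convergence of mollifications is simply unattainable for singular admissible \(q\) (in dimension \(d\geq 2\), take the segment flow between two Dirac marginals: \(\I(q^\tau)\) is absolutely continuous while \(\tilde q\) is concentrated on a Lebesgue-null set, so \(\abs{\I(q^\tau)-\tilde q}(\Tomega)\geq \abs{q}(\Omega)\) for every \(\tau\)), and the paper's own step in fact only controls the set-wise differences \((\I(q^\tau)-\tilde q)(A)\), and even that only when \(\abs{\tilde q}\) does not charge \(\partial A\). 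Your weaker conclusions --- dual-Sobolev-norm convergence and \ws convergence of the total variations --- are precisely what the later arguments consume (\(e_\epsilon\to 0\) in \(\Wperp[\Emega]\) and \(\intO w \abs{\check q_\epsilon}\dleb \to \intO w\dabsq\) in \cref{thm:gammaconv:eps,thm:gammaconv:eps_delta}), so your version works as a drop-in replacement; you should, however, state explicitly that you are proving convergence in the \({(\W<1,\alpha'>[\Tomega])}^*\)-norm and not in the total-variation norm, since the latter is what the lemma literally asserts.
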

\begin{proof}
    \begin{enumerate}
        \item See~\cite[Proposition 1.16]{Gaudry:2000}.
        \item Let \(\psi\in \Wempty[\Tomega]\).
              By~\cref{def:div} and Fubini's theorem, it holds
                  \begin{equation}\label{eq:divq:conv}
                  \begin{split}
                  -\dual{\div q^\tau}{\psi}
                  &= \intTO q^\tau \cdot \grad \psi\dleb
                  = \intTO\intTO \phi_\tau(y-x)\bd (\tilde q(x) \cdot \grad \psi(y)) \dy\\
                  &= \intTO \left(\intTO \phi_\tau(y-x) \grad \psi(y)\dy\right) \cdot
                  \bd \tilde q(x)\,.
              \\
              \end{split}
              \end{equation}
              Denote \(\psi_\tau(x) := \intTO \phi_\tau(y-x) \psi(y)\dy\).
              Then for every \(x\in\Omega\), integration by parts yields
                  \[
                      \intTO \phi_\tau(y-x) \grad \psi(y)\dy = \grad\psi_\tau(x)
                  \]
                  due to \(\phi_\tau(\blank-x) =0\) on \(\partial \Tomega\).
              Moreover, we note that \(\psi_\tau|_\Omega \in\W<1,\alpha'>\).
              Hence, since \(\tilde q\) is the extension by zero of \(q\) onto \(\Tomega\)
                  \begin{equation*}
                      -\dual{\div q^\tau}{\psi} = \intO \grad \psi_\tau \cdot\dq
                      = -\dual{\tilde \mu}{\psi_\tau}
                      = - \dual{\mu * \psi_\tau}{\psi}\,,
                  \end{equation*}
                  where the last equation follows analogously to~\cref{eq:divq:conv}.
        \item Let \(A\subset\Tomega\).
              Then
                  \begin{align*}
                  \abs{q^\tau - \tilde q}(A)
                  &= \abs[\big]{\int_A \intTO \phi_\tau(y-x) \bd \tilde q(x)
                      \dy -
                      \tilde q(A)}\\
                  &= \abs[\big]{\intTO \int_A \phi_\tau(y-x) \dy\bd \tilde q(x) - \tilde q(A)}\,.
              \end{align*}
              Clearly, the mapping \(x\mapsto \int_A \phi_\tau(y-x) \dy\)  is bounded by \(1\) for
                  all
                  \(\tau>0\) and converges to \(\1_{A}\) pointwise thanks to
                  \(\spt\tilde q\subset \Omega\).
              Hence, by dominated convergence, \(|q^\tau - \tilde q|(A)\to 0\).
        \item The last assertion is an immediate consequence of the reverse triangle
              inequality.\qedhere
    \end{enumerate}
\end{proof}

\begin{lemma}\label{thm:conv:vector_measure:spt}
    Let \(0<\tau\to 0\) and let the notation of~\cref{sec:approx} hold.
    Let \(q_\tau\in\M[\Omega_\tau]*\) and \(q\in\M[\Tomega]*\) such that \(\tilde q_\tau \wsto q\)
        in \(\M[\Tomega]*\).
    Then \(\spt q\subset \Omega\).
\end{lemma}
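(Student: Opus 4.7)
The plan is to argue by contrapositive on individual points. Take an arbitrary point $x_0 \in \Tomega \setminus \Omega$; since $\Omega$ is compact, there exists $\rho > 0$ such that the open ball $B(x_0, 2\rho)$ is disjoint from $\Omega$. The main geometric observation I would then establish is that for all sufficiently small $\tau > 0$, the set $\Omega_\tau$ is contained in the open $\rho$-neighborhood of $\Omega$, so that $B(x_0, \rho) \cap \Omega_\tau = \emptyset$. This uses the explicit form $\Omega_\tau = (1+s(\tau))\Omega$ together with the star-shapedness of $\Omega$ from \cref{assmpt:starshaped}: for any $y \in \Omega_\tau$ we can write $y = (1+s(\tau))z$ with $z \in \Omega$, and since $\Omega$ is bounded, $|y - z| = s(\tau)\,|z|$ can be made uniformly small by choosing $\tau$ small (recalling that $\supp\phi_\tau \to \{0\}$ forces $s(\tau) \to 0$ by the defining property $\Omega_\tau \supset \Omega + \supp\phi_\tau$ and the monotonicity $\Omega_\tau \supset \Omega_\vartheta$ for $\tau > \vartheta$).

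Once this geometric fact is in place, the rest is routine. For any vector-valued test function $\phi \in \CC[][\Tomega]*$ with $\supp \phi \subset B(x_0, \rho)$, one has
\[
\int_{\Tomega} \phi \cdot \bd \tilde q_\tau = 0
\]
for all sufficiently small $\tau$, because $\spt \tilde q_\tau \subset \Omega_\tau$ by definition of the extension by zero. Passing to the limit using $\tilde q_\tau \wsto q$ in $\M[\Tomega]*$ yields $\int_{\Tomega} \phi \cdot \bd q = 0$ for every such $\phi$, which in turn implies $|q|(B(x_0, \rho)) = 0$ and hence $x_0 \notin \spt q$. Since $x_0 \in \Tomega \setminus \Omega$ was arbitrary, this gives $\spt q \subset \Omega$.

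The only real obstacle is the geometric step, namely verifying quantitatively that $\Omega_\tau$ collapses to $\Omega$ as $\tau \to 0$. Everything else is bookkeeping: reducing support containment to a test-function statement and invoking weak-$*$ convergence. If the parameter $s = s(\tau)$ in the definition of $\Omega_\tau$ does not automatically tend to $0$ (the text requires only $s > 1$, which is puzzling), the monotonicity assumption $\Omega_\tau \supset \Omega_\vartheta$ for $\tau > \vartheta$ together with $\Omega_\tau \supset \Omega + \supp\phi_\tau$ still forces $\bigcap_\tau \Omega_\tau = \Omega$, and a compactness argument inside $\Tomega$ gives the required uniform containment for small $\tau$. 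This is the step where one must be careful with the precise definition of the enlarged domains, but no deep ingredient beyond star-shapedness is needed.
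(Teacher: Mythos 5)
Your proof is correct and rests on the same two ingredients the paper uses: (i) the enlarged domains $\Omega_\tau$ shrink to $\Omega$ as $\tau\to 0$, so that any ball around a fixed $x_0\in\Tomega\setminus\Omega$ eventually misses $\Omega_\tau$, and (ii) weak-$*$ convergence then forces $q$ to vanish on that ball. (Your aside about the paper's ``$s>1$'' is well taken --- it should read $s>0$; the requirement $\Omega_\tau\supset\Omega+\supp\phi_\tau$ together with $\supp\phi_\tau\to\{0\}$ and the monotonicity of $\tau\mapsto\Omega_\tau$ force $s(\tau)\to 0$, which is what both proofs implicitly need.) Where you and the paper part ways is only in the last bookkeeping step. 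You argue pointwise and directly: test $\tilde q_\tau$ against continuous vector fields $\phi$ supported in $B(x_0,\rho)$, pass to the limit using the very definition of weak-$*$ convergence, and invoke the duality characterization of total variation on an open set to conclude $|q|(B(x_0,\rho))=0$. The paper argues by contradiction via inner regularity: starting from a putative set $A\subset\Tomega\setminus\Omega$ with $|q|(A)>0$, it extracts a compact $K\subset A$ at positive distance $r$ from $\Omega$, forms the open neighborhood $N=K+\mathrm{B}_{r/2}$, and appeals to weak-$*$ lower semicontinuity of $p\mapsto|p|(N)$ on open sets to get the chain $0<|q|(K)\le|q|(N)\le\liminf_\tau|\tilde q_\tau|(N)=0$. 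These are two dressings of one fact --- the lower semicontinuity the paper quotes is itself proved by exactly the duality you use --- so the difference is a matter of taste; your version is a shade more self-contained since it does not quote the lower semicontinuity lemma, while the paper's is a shade shorter since it does.
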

\begin{proof}
    Assume the contrary such that there is a Borel set \(A\subset \Tomega\setminus\Omega\) with
        \(\abs{q}(A) = M>0\).
    Then there is a compact set \(K\subset A\) such that \(\abs{q}(K) > \frac M2\) and
        \(\dist(K,\Omega) = r >0\).
    Set \(N:= K + \B{\frac r2}\).
    From the \ws lower semicontinuity of \(\M[\Tomega]*\ni p\mapsto
        \abs{p}(A)\in\RR\) for an arbitrary (relatively) open set \(A\) (c.f.
    e.g.~\cite{maggi_2012}), we deduce
    \begin{equation*}
        0 < \frac M2
        < \abs{\tilde q}(K)
        \leq \abs{\tilde q}(N)
        \leq \liminf_{\tau\to 0} \abs{\tilde q_\tau}(N) = 0\,,
    \end{equation*}
    where we used that \(\spt \tilde q_\tau\subset \Taumega\) and \(\Taumega\cap N = \emptyset\)
    for \(\tau>0\) sufficiently small.
\end{proof}

\section{Proof of
\texorpdfstring{\cref{thm:laplace_eps_continuous}}{%
Lemma~\ref{thm:laplace_eps_continuous}}}\label{sec:appdx:laplace_eps_cont}

    In order to derive a proof for~\cref{thm:laplace_eps_continuous}, we aim to express the
    transformation of the domain through a transformation of the differential operator.
First, we present a special case of~\cite[Theorem 1]{Groeger1989} that is adapted to our setting.

\begin{theorem}\label{thm:groeger_inverse}
    Let~\cref{assmpt:general,assmpt:laplace} hold with \(\alpha ' = r\geq 2\) and by \(r'\) denote
        the conjugate exponent, \ie \(\frac 1r + \frac 1{r'} =
        1\).
    Let \(D: \Omega\to\RR^{d\times d}\) be a measurable map satisfying \(mI \preceq D(x) \preceq
        MI\) for all
        \(x\in\Omega\) with \(0<m\leq M\).
    Define
        \begin{equation}\label{eq:defn_A}
            A: \Wr<\emptyset> \to \Wrperp\,,\qquad %
            \sca{Ay}{\phi} := \intO (D(x)\grad y(x))\cdot \grad
            \phi(x)\dx%
            \quad \forall \phi\in\Wrempty
        \end{equation}
        and \(\lapr := \div[\LL{r}]\grad: \W<1,r><\emptyset> \to \W<-1,r><\perp>\).
    Note that \(\lapr\) is continuously invertible by~\cref{assmpt:laplace}.
    Let \(m\) and \(M\) denote the infimum and supremum over \(x\in\Omega\) of the smallest and
        largest eigenvalue of \(D(x)\), respectively.
    Finally, set \(k := (1-\frac{m^2}{M^2})\).

    If \(k\norm{\lapr\inv} <1\), then \(A\) is bijective.
    Moreover, \(A\inv\) is continuous with
        \begin{equation*}
        \norm{A\inv}_{\Wrperp\to\Wr<\emptyset>} \leq
        \frac{m \norm{\lapr\inv}}{M^2(1 - k \norm{\lapr\inv})}\,.
    \end{equation*}
\end{theorem}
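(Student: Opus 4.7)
The plan is to recast $Ay=f$ as a fixed-point problem for an affine map that is a contraction on $\Wr<\emptyset>$. For a damping parameter $\lambda>0$ to be determined, I would introduce
\[
T_\lambda : \Wr<\emptyset> \to \Wr<\emptyset>\,,\qquad T_\lambda(y) := y - \lambda\,\lapr\inv(Ay - f)\,,
\]
which is well-defined because $Ay-f \in \Wrperp$ by construction of $A$ and the standing assumption on the marginals. A fixed point of $T_\lambda$ is, by invertibility of $\lapr$, equivalent to $Ay = f$, so bijectivity of $A$ would follow from Banach's fixed-point theorem as soon as $T_\lambda$ is shown to be a strict contraction.

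To verify contractivity, for $z = y_1-y_2$ I would use the telescoped identity
\[
T_\lambda y_1 - T_\lambda y_2 = \lapr\inv(\lapr - \lambda A)z
\]
and bound $\|(\lapr-\lambda A)z\|_{\Wrperp}$ by $\|(I-\lambda D)\grad z\|_{\LL{r}}$ via its integral representation and Hölder's inequality. Pointwise in $x$, the symmetry of $D(x)$ together with the spectral bounds $m\le\mu\le M$ gives $|(I-\lambda D(x))v|\le \max_{\mu\in[m,M]}|1-\lambda\mu|\,|v|$. Choosing $\lambda := m/M^2$, one obtains $|1-\lambda m| = 1 - m^2/M^2 = k$ and $|1-\lambda M| = 1-m/M \leq k$, so
\[
\|T_\lambda y_1 - T_\lambda y_2\|_{\Wr<\emptyset>} \leq k\,\|\lapr\inv\|\,\|y_1 - y_2\|_{\Wr<\emptyset>}\,.
\]
Under the assumption $k\|\lapr\inv\| < 1$, this turns $T_\lambda$ into a strict contraction.

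Banach's theorem then yields a unique fixed point $y = A\inv f$ for every $f\in\Wrperp$, hence bijectivity of $A$. For the quantitative bound, I would start the Picard iteration at $y_0 = 0$; the first step gives $y_1-y_0 = \lambda\,\lapr\inv f$, and summing the geometric series in the contraction factor produces
\[
\|A\inv f\|_{\Wr<\emptyset>} \leq \frac{\lambda\|\lapr\inv\|}{1-k\|\lapr\inv\|}\,\|f\|_{\Wrperp}\,,
\]
which with $\lambda = m/M^2$ is exactly the stated bound on $\|A\inv\|$.

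The main obstacle is pinning down the damping parameter $\lambda$. Outside the Hilbert-space setting, the usual $L^2$-spectral optimum $\lambda = 2/(m+M)$ is not the right choice; instead $\lambda$ must be calibrated so that the pointwise $\ell^2$-bound on $I - \lambda D(x)$ produces precisely the constant $k = 1-m^2/M^2$ appearing in the smallness condition and, simultaneously, so that the prefactor $\lambda\|\lapr\inv\|/(1-k\|\lapr\inv\|)$ reduces to the stated $m\|\lapr\inv\|/(M^2(1-k\|\lapr\inv\|))$. The value $\lambda = m/M^2$ is the unique calibration matching both constraints, after which all remaining manipulations are standard Neumann-series/Banach-space arguments in the spirit of Gröger's original proof.
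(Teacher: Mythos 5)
Your proposal is correct and takes essentially the same route as the paper, which itself follows Gröger's argument: the paper defines the same damped map $Q_\nu y = y - t\,\lapr\inv(Ay-\nu)$ with $t = m/M^{2}$ (your $T_\lambda$ with $\lambda = m/M^{2}$), shows it is Lipschitz with constant $k\norm{\lapr\inv}$ via the pointwise bound $\|I-tD(x)\|\le k$, and invokes Banach's fixed-point theorem. The only cosmetic difference is in extracting the norm bound: the paper applies the triangle inequality to fixed points of $Q_\nu$ and $Q_\rho$ for two right-hand sides $\nu,\rho$, whereas you start the Picard iteration at $0$ and sum the geometric series; these are equivalent ways of reading off the constant $t\norm{\lapr\inv}/(1-k\norm{\lapr\inv})$ for an affine contraction.
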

\begin{proof}
    This proof follows the outline of the proof given in~\cite[Theorem 1]{Groeger1989}.

    We first note that \(A\) is well defined and bounded as mapping from  \(\Wrempty\) to
        \(\Wrperp\), which can be seen by applying Hölder's inequality.
    Moreover, \(A\) is injective, which can be seen as follows.
    Let \(y_1,y_2\in\Wr<\emptyset>\) with \(Ay_1 = Ay_2\).
    Due to \(r\geq 2\), we may choose \(\phi = y_1 - y_2\in\Wr<\emptyset>\embed \Wrempty\)
        in~\cref{eq:defn_A},
        which yields
        \(0 = \intO (D (\grad y_1 - \grad y_2)) \cdot (\grad y_1 - \grad y_2)\).
    Because \(D\) has positive definite values, this implies \(\norm{y_1 - y_2} = 0\), as
        conjectured.

    Let now \(t:=m M^{-2}\) and let \(B: \L{r}* \to \L{r}*\), \((By)(x) := y(x) - t D(x)y(x)\).
    Clearly, \(B\) is linear.
    Moreover, \(B\) is bounded with \(\norm{B} \leq k\).

    Going on, let \(\nu\in\Wrm<\perp>\) and set
        \begin{equation}\label{eq:defn_q_mu}
        \begin{split}
        Q_\nu&:\Wr<\emptyset>\to\Wr<\emptyset>\,,\\
        {\sca{Q_\nu y}\phi}
        &= \sca{\lapr\inv(-\div[\LL{r}] B \grad y
            + t\nu)}{\phi}
        \quad\forall\phi\in\Wrpmperp\,.
    \end{split}
    \end{equation}
    Note that
        \begin{align}\label{eq:q_mu:reformulated}
        {\sca{Q_\nu y}\phi}
        & ={\sca y\phi}
        - t \sca{ D\grad y}{\grad {(\lapr\inv)}^*\phi}_{\L{r}\times\L{r'}}
        + t\sca{\nu}{{(\lapr\inv)}^*\phi}\nonumber\\
        &= \sca{y - t \lapr\inv (A y-\nu)}{\phi}\,.
    \end{align}

    From~\cref{eq:defn_q_mu}, it is straight forward to derive
        \begin{align*}
            \norm{Q_\nu\zeta - Q_\nu \xi}_{\Wr<\emptyset>}
             & \leq \norm{\lapr\inv(-\div[\LL{r}]) B \grad} \norm{\zeta - \xi}_{\Wr<\emptyset>}
            \\
             & \leq \norm{\lapr\inv}\norm{(-\div[\LL{r}]) B \grad} \norm{\zeta -
                \xi}_{\Wr<\emptyset>}
            \leq k \norm{\lapr\inv} \norm{\zeta - \xi}_{\Wr<\emptyset>}
        \end{align*}
        for all \(\zeta,\xi\in\Wr<\emptyset>\).
    Hence, \(Q_\nu\) is Lipschitz continuous with Lipschitz constant \( k \norm{\lapr\inv}\).
    Due to the assumption \(k \norm{\lapr\inv}<1\), \(Q_\nu\) is also strictly contractive and
        by~\cref{eq:q_mu:reformulated} the fixed point \(y\in\Wr\) of \(Q_\nu\) is a solution of
        \(Ay
        =
        \nu\).
    Hence, \(A\) is surjective
        and it remains to prove the conjectured continuity constant.

    To that end, let \(\nu, \rho\in\Wrm\) and let
        \(\xi,\zeta\in\Wr\)
        be the corresponding fixed points of \(Q_{\nu}\) and \(Q_\rho\).
    Then,
        \begin{align*}
        \norm{\zeta-\xi}_{\Wr}
        &= \norm{Q_\rho \zeta - Q_\nu \xi}_{\Wr}
        \leq  \norm{Q_\rho\zeta- Q_\rho \xi}_{\Wr} + \norm{Q_\rho\xi - Q_\nu \xi}_{\Wr}\\
        &\leq k \norm{\lapr\inv} \norm{\zeta - \xi}_{\Wr} + \norm{t \lapr\inv(\rho - \nu)}_{\Wr}\\
        &\leq k \norm{\lapr\inv} \norm{\zeta - \xi}_{\Wr} + t \norm{\lapr\inv} \norm{\rho -
            \nu}_{\Wrm}\,.
    \end{align*}
    Therefore, we obtain
        \begin{equation*}
            \norm{A\inv\rho - A\inv\nu}_{\Wr} (1 - k \norm{\lapr\inv})
            = \norm{\zeta -\xi }_{\Wr} (1 - k \norm{\lapr\inv})
            \leq t \norm{\lapr\inv} \norm{\rho - \nu}_{\Wrm}\,,
        \end{equation*}
        which concludes the proof.
\end{proof}

\Cref{thm:groeger_inverse} now allows us to solve the Poisson equation on \(\Taumega\), which is
covered by the following \namecref{thm:perturbed_laplace}.

\begin{lemma}\label{thm:perturbed_laplace}
    In the setting of~\cref{thm:groeger_inverse}, let~\cref{assmpt:starshaped} hold in addition
        and let \(\tau>0\).
    Then, the equation
        \begin{equation}\label{eq:stretching_on_emega}
        \int_{\Taumega} \grad y_\tau \cdot \grad \phi \dleb = \sca \phi {\nu_\tau}
        \quad \forall \phi\in \Wrempty[\Taumega]\,.
    \end{equation}
    has a unique solution \(y_\tau\in\W<1,r><\emptyset>[\Taumega]\) for every
    \(\nu_\tau\in\Wrperp\).
    Moreover, the solution operator
        \(\laprtau\inv:\Wrperp[\Taumega]\to\W<1,r><\emptyset>[\Taumega]\)
        of~\cref{eq:stretching_on_emega}
        is continuous with
        \begin{align*}
        \norm{\laprtau\inv}_{\Wrperp[\Taumega]\to\W<1,r><\emptyset>[\Taumega]}
        \leq \norm{\lapr\inv} {(1+\tau)}^2\,.
    \end{align*}
\end{lemma}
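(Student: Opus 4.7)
The plan is to reduce the Poisson problem on the dilated domain $\Omega_\tau$ to an equivalent problem on $\Omega$ by means of an affine change of variables, solve it via \cref{assmpt:laplace} (equivalently, the special case of \cref{thm:groeger_inverse} in which $D$ is a scalar multiple of the identity), and then carefully track how the norms transform under the dilation to recover the factor $(1+\tau)^2$.

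Concretely, by \cref{assmpt:starshaped} the dilation $T_\tau:\Omega\to\Omega_\tau$, $T_\tau(\hat x):=(1+\tau)\hat x$, is a bi-Lipschitz bijection with constant Jacobian $(1+\tau)I$. Setting $\hat y:=y_\tau\circ T_\tau$, $\hat\phi:=\phi\circ T_\tau$ and defining $\hat\nu$ by $\langle\hat\phi,\hat\nu\rangle:=\langle\phi,\nu_\tau\rangle$, the chain rule and the change-of-variables formula turn \cref{eq:stretching_on_emega} into
\begin{equation*}
(1+\tau)^{d-2}\int_\Omega \grad \hat y\cdot\grad\hat\phi\,d\hat x \;=\; \langle\hat\phi,\hat\nu\rangle
\qquad\forall\,\hat\phi\in W^{1,r'}_\emptyset(\Omega),
\end{equation*}
i.e.\ the ordinary Poisson equation on $\Omega$ scaled by the constant $(1+\tau)^{d-2}$. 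Since $T_\tau$ is a homothety, both the zero-mean subspaces and the $\perp$-annihilator constraint on $\nu_\tau$ are preserved, so \cref{assmpt:laplace} yields the unique solution $\hat y=(1+\tau)^{2-d}\lapr^{-1}(\hat\nu)\in W^{1,r}_\emptyset(\Omega)$, from which $y_\tau:=\hat y\circ T_\tau^{-1}$ is the unique solution of \cref{eq:stretching_on_emega} on $\Omega_\tau$.

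For the operator-norm estimate I will use the elementary transformation rules
\begin{equation*}
\|u\|_{L^r(\Omega_\tau)}=(1+\tau)^{d/r}\|\hat u\|_{L^r(\Omega)},\qquad
\|\grad u\|_{L^r(\Omega_\tau)}=(1+\tau)^{d/r-1}\|\grad\hat u\|_{L^r(\Omega)},
\end{equation*}
and their $r'$-analogues. Since $(1+\tau)\geq 1$, this yields $\|y_\tau\|_{W^{1,r}(\Omega_\tau)}\leq (1+\tau)^{d/r}\|\hat y\|_{W^{1,r}(\Omega)}$ and, by taking suprema in the duality characterisation of the $W^{-1,r}$-norm, $\|\hat\nu\|_{W^{-1,r}(\Omega)}\leq (1+\tau)^{d/r'}\|\nu_\tau\|_{W^{-1,r}(\Omega_\tau)}$. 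Combining these with the factor $(1+\tau)^{2-d}$ from the rescaled equation and the conjugate-exponent identity $d/r+d/r'=d$, all $(1+\tau)$-powers collapse to the exponent $(2-d)+d/r+d/r'=2$, giving the stated bound.

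The main technical point will be precisely this exponent bookkeeping: the volume element, the gradient scaling, and the dual pairing each contribute a different power of $(1+\tau)$, and it is only the identity $\tfrac1r+\tfrac1{r'}=1$ that makes them combine into the clean exponent $2$. Aside from this the proof is a careful change-of-variables calculation, the well-posedness being immediate from \cref{assmpt:laplace} once the equation on $\Omega_\tau$ has been pulled back to $\Omega$; a minor check is that $T_\tau$ maps constants to constants and thus preserves the subspaces $W^{1,r}_\emptyset$, $W^{1,r'}_\emptyset$ and the $\perp$-annihilator condition.
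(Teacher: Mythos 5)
Your proposal takes essentially the same route as the paper: pull the problem on $\Omega_\tau$ back to $\Omega$ via the dilation $T_\tau(\hat x)=(1+\tau)\hat x$, obtain a Poisson equation with a constant scalar coefficient $(1+\tau)^{d-2}$, solve it via $\lapr^{-1}$, and recover the factor $(1+\tau)^2$ by tracking how the $W^{1,r}$- and $W^{-1,r}$-norms scale under the dilation. The only difference is cosmetic — you divide out the scalar $(1+\tau)^{d-2}$ and apply $\lapr^{-1}$ directly, whereas the paper feeds $\omega_\tau=(1+\tau)^{d-2}I$ into Theorem~\ref{thm:groeger_inverse}; as you note yourself, these are the same thing since $\omega_\tau$ is a scalar multiple of the identity.
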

\begin{proof}
    \Wloss we assume \(\Taumega = (1+\tau)\Omega\).
    Let
        \(
        \Phi_\tau\phi(x) = \phi\of{\frac{x}{1+\tau}}
        \)
        and note that \(\Phi_\tau\) is
        a homeomorphism from \(\W<1,s><\emptyset>\) to	\(\W<1,s><\emptyset>[\Taumega]\)
        for every \(s>1\).
    Moreover, let \(\omega_\tau := {(1+\tau)}^{d-2}I \in \RR^{d\times d}\).
    Note that the only
        eigenvalue of \(\omega_\tau\) is \({(1+\tau)}^{d-2}\).
    Let now \(g\in \Wrperp\) and consider the equation
        \begin{equation}\label{eq:stretching_starting_point}
        \intO \omega_\tau \grad y \cdot \grad v \dleb = \sca gv
        \quad \forall v\in \Wrempty\,.
    \end{equation}
    By~\cref{thm:groeger_inverse},~\cref{eq:stretching_starting_point} has a unique solution
        \(y\in\W<1,r><\emptyset>\).
    By defining \(g\in \Wrperp\) via
        \(g := {(1+\tau)}^d\Phi_\tau\inv \nu_{\tau}\)
        as well as \(y_\tau := \Phi_\tau y\in\W<1,r><\emptyset>[\Taumega]\) and inserting both
        into~\cref{eq:stretching_starting_point}, we obtain
        \begin{align}
            \intO \of{\omega_\tau \grad \Phi_\tau\inv y_\tau} \cdot \grad \Phi_\tau\inv\psi
             & = \sca\psi{\nu_\tau}
            \quad \forall \psi\in\Wrempty[\Taumega]\,,\label{eq:haller_dintelmann}
        \end{align}
        where we have used that \(\Phi_\tau\) is a bijection.
    Using the transformation formula,~\cref{eq:haller_dintelmann} can be seen to be
        equivalent
        to~\cref{eq:stretching_on_emega} and hence, \(y_\tau\) is a solution
        of~\cref{eq:stretching_on_emega}.
    Note that \(y_\tau\) is the unique solution, since \(y\) is the unique solution
        of~\cref{eq:stretching_starting_point} and \(\Phi_\tau\) is a bijection.

    To show continuity of the solution operator, we first note that thanks
        to~\cref{eq:stretching_starting_point} for \(y_\tau\) it holds
        \begin{equation*}
            \norm{y_\tau}_{\W<1,r><\emptyset>[\Taumega]}
            = \omega_\tau\inv \norm{\Phi_\tau\lapr\inv g}_{\W<1,r><\emptyset>[\Taumega]}
            \leq \frac{
            \norm{\Phi_\tau}_{\W<1,r><\emptyset> \to \W<1,r><\emptyset>[\Taumega]}
            \norm{\lapr\inv}
            \norm{g}_{\Wrperp}
            }{
            {(1+\tau)}^{d-2}
            } \,,
        \end{equation*}
        where
        \begin{align*}
            \norm{g}_{\Wrperp}
            \leq \sup_{1=\norm{v}_{\Wrempty}}
            \norm{\nu_\tau}_{\Wrperp[\Taumega]} \norm{\Phi_\tau v}_{\Wrempty[\Taumega]}
            \leq \norm{\nu_\tau}_{\Wrperp[\Taumega]}
            \norm{\Phi_\tau}_{\Wrempty\to\Wrempty[\Taumega]}
        \end{align*}
        and it remains to compute the operator norm
        \(\norm{\Phi_\tau}_{\W<1,s><\emptyset>\to\W<1,s><\emptyset>[\Taumega]}\).
    To this end, let \(\phi\in\W<1,s><\emptyset>\).
    Using the transformation formula, it is
        straightforward to compute
        \begin{align*}
            \norm{\Phi_\tau \phi}_{\L{s}[\Taumega]}
             & = {(1+\tau)}^{\frac ds} \norm{\phi}_{\L{s}}\,,            \\
            \norm{\frac 1{1+\tau} \grad \Phi_\tau\phi}_{\L{s}[\Taumega]}
             & = {(1+\tau)}^{\frac ds - 1} \norm{\grad \phi}_{\L{s}*}\,,
        \end{align*}
        such that
        \begin{align*}
        \norm{\Phi_\tau}_{\W<1,s><\emptyset>\to\W<1,s><\emptyset>[\Taumega]}
        &= \sup_{1=\norm{\phi}_{_{\W<1,s><\emptyset>}}} {(1+\tau)}^{\frac ds} \norm{\phi}_{\L{s}}
        +
        {(1+\tau)}^{\frac ds - 1} \norm{\grad \phi}_{\L{s}*}\\
        &\leq \sup_{\substack{\zeta,\xi\in[0,1]\\\zeta+\xi=1}} \zeta {(1+\tau)}^{\frac ds} + \xi
        {(1+\tau)}^{\frac ds - 1}
        = {(1+\tau)}^{\frac ds}\,.
    \end{align*}
    Hence,
        \begin{align*}
            \frac{\norm{\laprtau\inv \nu_{\tau}}_{\W<1,r><\emptyset>[\Taumega]}
            }{\norm{\nu_\tau}_{\Wrperp[\Taumega]}}
            = \frac{\norm{y_\tau}_{\W<1,r><\emptyset>[\Taumega]}
            } {\norm{\nu_\tau}_{\Wrperp[\Taumega]}}
            \leq \frac{\norm{\lapr\inv}}{{(1+\tau)}^{d-2}}
            {(1+\tau)}^{\frac{d}r}{(1+\tau)}^{\frac{d}{r'}}
            = \norm{\lapr\inv} {(1+\tau)}^2\,,
        \end{align*}
        which yields \(\norm{\laprtau\inv}_{\Wrperp[\Taumega]\to\W<1,r><\emptyset>[\Taumega]} \leq
        \norm{\lapr\inv} {(1+\tau)}^2 \)
        and concludes the proof.
\end{proof}

Finally, we're in the position to prove~\cref{thm:laplace_eps_continuous}.

\begin{proof}
    Choosing \(r=\alpha'\) in~\cref{thm:perturbed_laplace}, we obtain that
        \begin{equation*}
            - \int_{\Taumega} \grad \zeta \cdot \grad \phi = \int_{\Taumega} \phi \bd \xi \quad
            \forall \phi \in\W<1,\alpha><\emptyset>[\Taumega]\,,
        \end{equation*}
        has a unique solution \(\zeta\in\Wempty[\Taumega]\) for all
        \(\xi\in\W<-1,\alpha'><\perp>[\Taumega]\),
        which corresponds to~\cref{eq:poisson} on \(\Taumega\).
    Moreover, the corresponding solution
        operator is uniformly bounded for \(\tau\to 0\).
    The assertion now follows analogously to the proof of~\cref{thm:div:surjective}.
\end{proof}

\bibliographystyle{plain}
\bibliography{refs}

\end{document}